\newcommand{\st}[1]{\ifmmode\text{\sout{\ensuremath{#1}}}\else\sout{#1}\fi}
\setlist[enumerate]{topsep=2mm,parsep=0pt,partopsep=0mm,itemsep=1mm,leftmargin=15mm,labelsep=2mm}
\theoremstyle{plain}
\newtheorem{theorem}{Theorem}[section]
\newtheorem{proposition}[theorem]{Proposition}
\newtheorem{corollary}[theorem]{Corollary}
\newtheorem{lemma}[theorem]{Lemma}
\newtheorem{result}{Theorem}
\theoremstyle{definition}
\newtheorem{remark}[theorem]{Remark}
\newtheorem{definition}[theorem]{Definition}
\newtheorem{example}[theorem]{Example}
\numberwithin{equation}{section}
\newcommand{\proofof}[1]{\fontseries{bx}\fontshape{it}\selectfont Proof of #1}
\newcommand{\proofofNP}[1]{\fontseries{bx}\fontshape{it}\selectfont Proof of #1\nopunct}
\let\Re\undefined
\DeclareMathOperator{\Re}{\mathtt{Re\hskip-.07ex}}
\let\ge=\geqslant
\let\le=\leqslant
\newcommand{\C}{{\mathbb{C}}}
\newcommand{\R}{{\mathbb{R}}}
\newcommand{\N}{{\mathbb{N}}}
\let\Complex=\C
\let\Real=\R
\let\Natural=\N
\newcommand{\UH}{{\mathbb{H}}} % the right half-plane
\newcommand{\U}{\mathfrak{U}\hskip.05em}
\newcommand{\UD}{{\mathbb{D}}}  %the unit disk
\newcommand{\di}{\mathrm{d}}  % for integration, e.g. \di x
\newcommand{\Hol}{{\sf Hol}}
\newcommand{\id}{{\sf id}}
\newcommand{\anglim}{\angle\lim}
\newcommand{\DCn}{$D\in\{\UD,\UH\}$\xspace}
\newcommand{\Lloc}{L^1_{\mathrm{loc}}}
\newcommand{\ind}{\mathbf{1}} % indicator function
\newcommand{\xx}{x} % for real variable of functions on the unit disk
\renewcommand{\Pr}{{\sf P}} % the set of probability measures on ...
\newcommand{\PGF}{\mathfrak{PGF}}  % the set of probability generating functions
\newcommand{\Pp}{\mathfrak{P}^+} % the set of strict probability generating functions
\newcommand{\BF}{\mathfrak{BF}} % the set of Bernstein functions without the identically zero function
\newcommand{\Gen}{\mathcal{G}} % the set of infinitesimal generators
\newcommand{\cadlag}{c\`adl\`ag\xspace}
\newcommand{\starM}{\circ} % composition of transition kernels
\newcommand{\LpT}[1]{\mathcal{L}[#1]} % Laplace transform
\newcommand{\StepN}[1]{\textsc{Step~#1}}
\newcommand{\Step}[1]{\medskip\noindent{\StepN{#1}.}}
\newcommand{\StepP}[1]{\medskip\noindent{\textsc{Proof of #1.}}}
\DeclareRobustCommand{\SkipTocEntry}[5]{}
\begin{document}

\title[Loewner Theory for Bernstein functions II]{Loewner Theory for Bernstein functions II: applications to inhomogeneous continuous-state branching processes}
\author[P.~Gumenyuk, T.~Hasebe, J.\,L. P\'erez]{Pavel Gumenyuk, Takahiro Hasebe, Jos\'e-Luis P\'erez}

\address{P.~Gumenyuk: Department of Mathematics, Politecnico di Milano, via E.~Bonardi~9, 20133 Milan, ITALY} \email{pavel.gumenyuk@polimi.it}

\address{T.~Hasebe: Department of Mathematics,
Hokkaido University,
North~10, West~8, Kita-ku,
Sapporo 060-0810, JAPAN} \email{thasebe@math.sci.hokudai.ac.jp}

\thanks{This work is supported by the Research Institute for Mathematical Sciences,
an International Joint Usage/Research Center located in Kyoto University, and by JSPS Grant-in-Aid for Young Scientists 19K14546, JSPS Scientific Research 18H01115 and JSPS Open Partnership Joint Research Projects grant no.~JPJSBP120209921.}

\address{J.\,L. P\'erez: Department of Probability and Statistics, Centro de Investigaci\'on en Matem\'aticas, A.C., Calle Jalisco s/n, C.P. 36240, Guanajuato, MEXICO} \email{jluis.garmendia@cimat.mx}

\begin{abstract}
This paper continues the research project launched in~[Constr.\ Approx.\ (2025) \href{https://doi.org/10.1007/s00365-023-09675-9}{DOI~10.1007/s00365-023-09675-9}]
 and aimed at studying time-inhomogeneous one-dimensional branching processes (mainly on a continuous but also on a discrete state space) with the help of recent achievements in Loewner Theory dealing with evolution families of holomorphic self-maps in simply connected domains of the complex plane. Under a suitable stochastic continuity condition, we show that the families of the Laplace exponents of branching processes on~$[0,\infty]$ can be characterized as topological (i.e.\ depending continuously on the time parameters) reverse evolution families whose elements are Bernstein functions.
For the case of a stronger regularity w.r.t. time,  we establish a Loewner\,--\,Kufarev type ODE for the Laplace exponents and characterize branching processes with finite mean in terms of the vector field driving this~ODE. Similar results are obtained for families of probability generating functions of branching processes on the discrete state space $\{0,1,2,\ldots\}\cup\{\infty\}$. In addition, we find a necessary and sufficient condition for ``spatial'' embeddability of such branching processes into branching processes on~$[0,\infty]$.
Finally, we give some probabilistic interpretations of the Denjoy\,--\,Wolff point at~$0$ and at~$\infty$.
\end{abstract}

\maketitle

\tableofcontents

\newpage

\section{Introduction}\label{sec1}
Branching processes are certain Markov processes
with various applications including biological population growth e.g.\ in epidemic models \cite{Bri10}. The case of discrete time and one-dimensional discrete state (i.e.\ the set of non-negative integers) was rather classical and it goes back to the work of Bienaym\'e, Galton and Watson in the 19th century; see \cite{AN72,Har63,Ken75}.
The continuous time setting~--- mostly in the \emph{time-homogeneous} case~--- also has been intensively studied in the literature;  we refer the reader to \cite{AN72,Har63} for the discrete space case and to~\cite{Kyp14} for the one-dimensional continuous state case. Time-homogeneous branching processes on multi-dimensional continuous state spaces have been also studied, see e.g. \cite{RyzhovSkorokhod, LiLi2024} and references therein.

In the continuous state setting, a crucial tool for analyzing branching processes is one-parameter (compositional) semigroups of Laplace exponents of transition kernels. The Laplace exponents in the one-dimensional case are Bernstein functions, which are known to appear as the Laplace exponents of subordinators too. Hence,  one-parameter semigroups of Bernstein functions seem to play an important role in this context.

Time-inhomogeneous branching processes have been also investigated; see e.g.\ \cite{Har63,MJ1958} and references therein, but this area seems to be considerably less explored compared with the time-homogeneous case.  To some extent, this situation is perhaps because, from the mathematical aspect, one needs to go beyond the well developed theory of one-parameter semigroups and deal with the so-called reverse evolution families. One of the difficulties arising in this case is related to the regularity w.r.t. time:
for one-parameter compositional semigroups, continuity in time often implies differentiability or even real analyticity, but such is not the case for evolution families.

The main purpose of this paper is to apply to the study  of time-inhomogeneous one-dimensional branching processes recent achievements on a Carath\'eodory-type ODE for \hbox{(reverse)} evolution families of holomorphic functions  developed in the frames of Loewner Theory since around 2010; see e.g. \cite{BCM1,BRFP2015,CDG_decr}. We mainly work with the continuous state space $[0,\infty]$ but also discuss the discrete states $\{0,1,2,\dots\}\cup\{\infty\}$.   In fact, the idea of using reverse evolution families to analyze time-inhomogeneous branching processes appeared for the case of discrete state space in Gorya\u{\i}nov~\cite{Goryainov96R, Goryainov96}, see also \cite[p.\,1012]{Goryainov-survey}. We will more systematically develop this idea, especially for the continuous state space $[0,\infty]$. In particular, we are aiming at finding interpretations of some relevant notions in Complex Analysis, and especially in Holomorphic Dynamics, from the viewpoint of Probability Theory and vice versa. Worth mentioning that this adds another application of Loewner Theory in Probabilities; other previously known applications are mostly limited to SLE, see e.g.~\cite{Lawler}, and certain aspects of non-commutative probability \cite{Bauer,FHS20,Jek20}.

The work has been divided into two parts. In the first part~\cite{GHP} we have prepared complex-analytic tools related to reverse evolution families, which are applied in the second part, i.e.\ in the present paper, to time-inhomogeneous branching processes. In particular, we show that under a very general assumption on regularity w.r.t.~$s$ and~$t$, the Laplace exponents $(v_{s,t})$ of a given time-inhomogeneous continuous state branching process satisfy a special  form of the Loewner\,--\,Kufarev ODE driven by a time-varying branching mechanism~$\phi(\cdot,s)$; see equation\,\eqref{eq:ODE}. In a sense, this is complementary to recent results of Fang and Li. In~\cite{FL}, they construct a continuous state branching process with the Laplace exponents defined as solutions to an integral form of the ODE~\eqref{eq:ODE} with identically vanishing free term in the Silverstein-type representation~\eqref{eq:BF_generator} of the branching mechanism. While considering the integral equation would permit to relax the regularity assumptions, working with the Loewner\,--\,Kufarev ODE enables us to benefit from Loewner Theory, with the help of which we establish a number of new results. Most important of them can be briefly summarised as follows:

\medskip

\begin{description}
\item[Theorem~\ref{thm:REF_BP}] assuming stochastic continuity by definition, we establish a one-to-one correspondence between transition kernels~$(k_{s,t})$ of branching processes and topological reverse evolution families~$(v_{s,t})$ consisting of Bernstein functions;
    \smallskip

\item[Theorem~\ref{thm:main_BP}] a one-to-one correspondence is established between absolutely continuous reverse evolution families~$(v_{s,t})$ consisting of Bernstein functions and Herglotz vector fields~$\phi$ admitting the Silverstein-type integral representation~\eqref{eq:BF_generator};
    \smallskip

\item[Theorem~\ref{TH_1-to-1-ACloc}] we prove that the first two moments of a continuous state branching process~$(Z_t)$ are finite and locally absolutely continuous (as functions of~$t$) if and only if the reverse evolution family of Laplace exponents~$(v_{s,t})$ associated to~$(Z_t)$ is absolutely continuous and the parameters in the Silverstein-type representation~\eqref{eq:BF_generator} of its Herglotz vector field~$\phi$ satisfy two simple conditions;
    \smallskip
   
\item[Theorem~\ref{thm:BP-for-DWzero}, Corollaries~\ref{cor:DW_infty} and~\ref{cor:DW_finite}] Silverstein-type representation formulas for Herglotz vector fields in case of the Laplace exponents with the Denjoy\,--\,Wolff point\footnote{For the definition of the Denjoy\,--\,Wolff point, see Appendix~\ref{App-Fixed} or \cite[Sect.\,2.3]{GHP}.} located at the same point $\tau\in[0,\infty]$;
    \smallskip
   
\item[Theorem~\ref{TH_infty-to-zero} and Corollary~\ref{TH_infty-to-nonzero}] sufficient conditions for almost sure extinction within a finite time and for almost surely \textit{no} extinction within a finite time  in cases of the Denjoy\,--\,Wolff point at ${\tau=0}$ and at ${\tau=\infty}$, respectively;
    \smallskip
   
\item[Theorem~\ref{thm:non-decreasing}] a probabilistic interpretation of the Denjoy\,--\,Wolff point at~$\infty$;
    \smallskip
   
\item[Theorem~\ref{TH_infty-to-tau}] probabilities of extinction and explosion within a finite time in case of a common interior Denjoy\,--\,Wolff point~${\tau\in(0,\infty)}$;
    \smallskip
   
\item[Section~\ref{sec3}]\hspace{-.37em} Analogous results are obtained in Sect.~\ref{sec3} for branching processes on the discrete state space;%
    \smallskip
   
\item[Theorem~\ref{thm:non-decreasingD}] we show that a branching process~$(N_t)$ on the discrete state space can be embedded into a branching process~$(Z_t)$ on the continuous state space if and only if $(N_t)$ is ``non-decreasing'' in a suitable sense.
\end{description}

\section{Preliminaries}\label{S_preliminaries} In this section we collect some basic definitions and preparatory facts in probability theory and complex analysis, mostly taken from the literature. Some material intended to make this paper more self-contained can be found in the Appendices. Our main results will be stated and proved in Sections~\ref{sec2} and~\ref{sec3}.
\subsection{One-parameter semigroups of holomorphic self-maps}\label{SS_one-param}
For a domain ${D\subset\C}$ and a non-empty set ${E\subset\C}$, denote by $\Hol(D,E)$ the set of all holomorphic maps of $D$ into~$E$. We make $\Hol(D,E)$ a topological space by endowing it with locally uniform convergence in~$D$.

A \textsl{semigroup of holomorphic self-maps} of  a domain $D\subset \C$ is a subset $\U\subset\Hol(D,D)$ containing the identity map~$\id_D$ and such that $f\circ g\in\U$ for any~$f,g\in\U$.
We say that such a semigroup $\U$ is \textsl{topologically closed} if $\U$ is a closed subset of $\Hol(D,D)$.

%\begin{definition}
Let $\U$ be a semigroup of holomorphic self-maps of a domain ${D\subset\C}$. A family ${(v_t)_{t\ge0}\subset\U}$ is said to be
a \textsl{one-parameter semigroup} in~$\U$, if the map ${t\mapsto v_t}$ is a continuous unital semigroup homomorphism from the semigroup ${\big([0,+\infty),+\big)}$ to the semigroup~${(\U,\circ)}$.
%\end{definition}
%

\smallskip\noindent{\bf Convention.~}~From now on we suppose that $D$ is either the unit disk~$\UD:=\{z\colon|z|<1\}$ or the right half-plane $\UH:=\{\zeta:\Re \zeta>0\}$.\smallskip

According to a result of Berkson and Porta \cite{BP78}, a one-parameter semigroup $(v_t)_{t\ge0}$ in $\Hol(D,D)$ is differentiable in $t \ge0$ and  can be obtained by a unique holomorphic function (called the \textsl{infinitesimal generator}) $\phi\colon D \to \mathbb C$ via the ODE
\begin{equation}\label{eq:ODEAuto}
\frac{\di}{\di t} v_t(z) = -\phi(v_t(z)),\qquad t\ge0, ~z\in D.
\end{equation}

For a semigroup $\U$ of holomorphic self-maps of $D$, we will denote by $\Gen(\U)$ the set of all infinitesimal generators of one-parameter semigroups in $\U$.
According to \cite[Theorem~1]{GHP}, if $\U$ is topologically closed, then $\Gen(\U)$ is a closed real convex cone in $\Hol(D,\C)$.

\subsection{Bernstein functions}\label{subsec:Bernstein}
A function $f\colon (0,\infty) \to \R$ is called a \textsl{Bernstein function} if it is of class $C^\infty$, $f\ge0$, and $(-1)^{n-1} f^{(n)}\ge0$ for all $n \in \N$. Thanks to monotonicity of $f^{(n)}$, there exist (finite or infinite) limits
\begin{equation}\label{EQ_BF-limits}
 f^{(n)}(0)\coloneqq \lim_{\theta\to 0^+}f^{(n)}(\theta) \quad \text{and} \quad  f^{(n)}(\infty)\coloneqq \lim_{\theta\to \infty}f^{(n)}(\theta),\quad \quad n\in\N\cup\{0\}.
\end{equation}
Note that the value $f(0)$ is finite because $f$ is non-decreasing and non-negative.

A detailed theory of Bernstein functions can be found, e.g., in monograph~~\cite{SSV12}. A fact of crucial importance for the present study is that every Bernstein function extends to a holomorphic map from the right half-plane $\UH\coloneqq \{\zeta \in \C: \Re \zeta>0\}$ into its closure $\overline{\UH}$. From now on, a holomorphic function in $\Hol(\UH,\overline{\UH})$ whose restriction to $(0,\infty)$ is  a Bernstein function will be also referred to as a Bernstein function.

Applying the maximum principle to the harmonic function $-\Re f$, it is easy to show that if a Bernstein function $f$ is not identically zero, then it is a self-map of~$\UH$.
Therefore, the set $\BF$ of all Bernstein functions $f\not\equiv0$ is a subset of $\Hol(\UH,\UH)$.
Another cornerstone fact is that $\BF$ is a topologically closed semigroup in $\UH$.

The class $\BF$ has been a basic tool for studying time-homogeneous branching processes; see \cite{Gre74,Kyp14,Sil68}.
Among others, Silverstein \cite[Theorem 4]{Sil68}  identified $\Gen(\BF)$, the set of all infinitesimal generators of the one-parameter semigroups in $\BF$, with the set of functions
\begin{equation}\label{gen}
\phi(\zeta) = -q+ a \zeta + b \zeta^2 + \int_0^\infty (e^{-\zeta x} - 1 + \zeta x \ind_{(0,1)}(x)) \,\pi(\di x), \qquad  \zeta\in \UH,
\end{equation}
where $a \in \R,~ q, b \ge0$, and $\pi$ is a \textsl{L\'evy measure}, i.e.\ a non-negative Borel measure on ${(0,\infty)}$ satisfying
\begin{equation}\label{EQ_Levy-measure}
  \int_0^\infty \min\{x^2,1\} \,\pi(\di x)<\infty.
\end{equation}

Note that the quadruple $(q,a,b,\pi)$ is unique for each $\phi$ and hence this quadruple completely parameterizes the set $\Gen(\BF)$. Other proofs of the representation~\eqref{gen} are available in \cite{CLUB09} and~\cite[Section~3.3, Appendix~D]{GHP}.

One of the main achievements in this paper is that we are able to extend Silverstein's theorem to the time-inhomogeneous setting, see Section~\ref{subsec:ODE}. This requires complex-analytic tools extending the theory of one-parameter semigroups to a ``non-autonomous'' setting, as we describe below.

\subsection{Reverse evolution families}\label{SS_REF}
A non-autonomous version of the ODE~\eqref{eq:ODEAuto}, i.e.\ an equation of the same form but with the vector field $\phi$ depending explicitly on~$t$, has been intensively studied in Complex Analysis. One-parameter semigroups of holomorphic self-maps in this setting are replaced by more general (reverse) evolution families.

For later use, denote by $\Delta$ the set $\{(s,t)\colon s,t\in [0,\infty),\,s\le t\}$
and let $\Lloc$ stand for the class of all measurable (real- or complex-valued) functions on $[0,\infty)$ which are integrable with respect to the Lebesgue measure on each compact subinterval of~$[0,\infty)$.

The following definitions are slight modifications of the definitions given in \cite{BCM1,CDG_decr}.

\begin{definition}\label{def:HVF}
Let \DCn. A function ${\phi:D\times [0,\infty)\to\C}$ is said to be a \textsl{Herglotz vector field} in~$D$ if it satisfies the following three conditions:
\begin{enumerate}[label=\rm(HVF\arabic*),leftmargin=20mm]
\item\label{HVF1} for any $t\ge0$, $\phi(\cdot,t)$ is an infinitesimal generator in~$D$;
\item\label{HVF2} for any $z\in D$, $\phi(z,\cdot)$ is measurable on~$[0,\infty)$;
\item\label{HVF3} for any compact set $K\subset D$, there is a non-negative function ${M_K\in\Lloc}$ such that
\[
\max_{z\in K}|\phi(z,t)|\le M_K(t)\quad\text{a.e.~$t\in [0,\infty)$}.
\]
\end{enumerate}%\smallskip

As usual we identify two Herglotz vector fields ${\phi,\psi\colon D\times [0,\infty)\to\C}$  if $\phi(\cdot,t)=\psi(\cdot,t)$ for a.e.~${t\in [0,\infty)}$.
The notion of reverse evolution families, introduced below, provides a way to describe solutions to the ODE driven by a Herglotz vector field.
\end{definition}

\begin{definition}\label{DF_REF}
An \textsl{absolutely continuous reverse evolution family} in \DCn is a two-parameter family ${(v_{s,t})_{(s,t)\in \Delta}\subset\Hol(D,D)}$ satisfying the following conditions:
\begin{enumerate}[label=\rm(REF\arabic*),leftmargin=20mm]
\item\label{REF1} $v_{s,s}=\id_{D}$ for any~$s\ge 0 $,

\item \label{REF2} $v_{s,u}=v_{s,t}\circ v_{t,u}$ for any ${0\le s\le t \le u}$,

\item\label{REF3} For each $z\in D$ there exists a non-negative function $f_{z}\in\Lloc$ such that
\[
|v_{s,u}(z)-v_{s,t}(z)|\le\int_t^u\!f_{z}(r)\,\di r\qquad\text{for any ${0\le s\le t \le u}$.}
\]
\end{enumerate}

Furthermore, a two-parameter family ${(v_{s,t})_{(s,t)\in \Delta}\subset\Hol(D,D)}$ is called a \textsl{topological\\ reverse evolution family} if it satisfies \ref{REF1}, \ref{REF2} and
\begin{enumerate}[label=\rm(REF\arabic*${}'$),leftmargin=20mm]
\setcounter{enumi}{2}
\item \label{REF3'} $\Delta\ni(s,t)\mapsto v_{s,t} \in \Hol(D,D)$ is continuous.
\end{enumerate}
\end{definition}
Any absolutely continuous reverse evolution family satisfies \ref{REF3'} and hence it is also a topological reverse evolution family; this is a consequence of \cite[Proposition~3.5]{BCM1} and \cite[Proposition~4.3]{CDG_decr}.

\smallskip
From the viewpoint of dynamics in the complex plane, it is more natural to consider the condition $v_{t,u} \circ v_{s,t}= v_{s,u}~(0\le s \le t \le u)$ rather than \ref{REF2}; such a family is called an (absolutely continuous / topological) \textsl{evolution family}.  On the other hand, in the existing applications to (non-commutative) probability theory, reverse evolution families have been more natural; see \cite{FHS20,Lawler}. In the present work where branching processes are analyzed, again reverse evolution families turn out to be the natural objects.

Some of the main results of~\cite{CDG_decr} can be summarized and stated in our notation as follows.
\begin{result}\label{TH_mainBCM1}
Let $(v_{s,t})_{(s,t)\in\Delta}$ be an absolutely continuous reverse evolution family in~\DCn. Then there exists a unique Herglotz vector field ${\phi\colon D\times [0,\infty)\to\C}$ such that for any $z\in D$ and any $t\in (0,\infty)$, the map ${[0,t] \ni s\mapsto v(s)\coloneqq v_{s,t}(z)\in D}$ is a solution to the initial value problem
\begin{equation}\label{EQ_ini-LK-ODE}
\frac{\di v}{\di s}(s)=\phi(v(s),s),\quad \text{a.e.\ } s\in [0,t];\qquad v(t)=z.
\end{equation}

Conversely, let ${\phi\colon D\times [0,\infty)\to\C}$ be a Herglotz vector field. Then for any ${z\in D}$ and any $t\in (0,\infty)$ the initial value problem~\eqref{EQ_ini-LK-ODE} has a unique solution ${s\mapsto v(s)=v(s;z,t)}$ defined on some open interval of~$\Real$ containing~$[0,t]$. Setting ${v_{s,t}(z)\coloneqq v(s;z,t)}$ for all ${z\in D}$ and all $(s,t)\in\Delta$ one obtains an absolutely continuous reverse evolution family $(v_{s,t})_{(s,t)\in\Delta}$.
\end{result}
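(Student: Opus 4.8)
The plan is to treat the two implications separately after reducing to the model domain $D=\UD$. Since $D\subsetneq\C$ is simply connected, I would fix a Riemann map $\sigma\colon\UD\to D$ and conjugate, setting $\tilde v_{s,t}:=\sigma^{-1}\circ v_{s,t}\circ\sigma$. Conjugation preserves \ref{REF1} and \ref{REF2} trivially, and combined with the Koebe distortion estimates it preserves \ref{REF3} (with new dominating functions on compact subsets); a Herglotz vector field transports between the two domains via the pushforward $\phi(z,s)=\sigma'(\sigma^{-1}(z))\,\tilde\phi(\sigma^{-1}(z),s)$. Hence it suffices to argue for $\UD$, where the Berkson--Porta machinery and the closedness of the generator cone \cite[Theorem~1]{GHP} are available.

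\emph{Family $\Rightarrow$ field.} For a.e.\ $s>0$ and each $z$ I would define the candidate vector field by the backward difference quotient
\[
\phi(z,s):=\lim_{h\to 0^+}\frac{z-v_{s-h,s}(z)}{h},
\]
a choice forced by \ref{REF2}: writing $w=v_{s,t}(z)$ and using $v_{s-h,t}=v_{s-h,s}\circ v_{s,t}$ gives $\partial_s v_{s,t}(z)=\lim_{h\to0^+}\big(w-v_{s-h,s}(w)\big)/h$, which is exactly \eqref{EQ_ini-LK-ODE}. The work is to show this limit defines a genuine Herglotz vector field. First, \ref{REF3} together with Cauchy's estimates upgrades the pointwise dominating functions $f_z$ to a locally uniform-in-$z$, locally $L^1$-in-$s$ bound, yielding \ref{HVF3} and making $s\mapsto v_{s,t}(z)$ locally absolutely continuous with an a.e.\ derivative. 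A normal-families/Vitali argument applied over a countable dense set of points $z$, together with the equivalence of \ref{REF3'} and \ref{REF3''}, produces a single null set off which the limit exists locally uniformly and is holomorphic in $z$, which gives measurability \ref{HVF2}. Finally, for \ref{HVF1} I would note that $\phi(\cdot,s)$ is the locally uniform infinitesimal increment of the self-maps $v_{s-h,s}$ of $\UD$; by the Berkson--Porta representation and the closedness of $\Gen(\Hol(\UD,\UD))$ \cite[Theorem~1]{GHP}, such a limit is an infinitesimal generator. Uniqueness of $\phi$ is then immediate, since any admissible field must coincide a.e.\ with this derivative.

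\emph{Field $\Rightarrow$ family.} Given a Herglotz vector field $\phi$ and a fixed pair $(z,t)$, I would solve \eqref{EQ_ini-LK-ODE} backward in $s$. Because $\phi$ is measurable in $s$ with the local $L^1$ bound \ref{HVF3} and holomorphic, hence locally Lipschitz, in $z$, Carath\'eodory's theorem supplies a unique maximal solution, and the holomorphic-dependence theorem for Carath\'eodory ODEs makes $z\mapsto v_{s,t}(z)$ holomorphic. The essential point is that the solution never leaves $\UD$ and is defined on a whole neighbourhood of $[0,t]$: reversing time by $\tau=t-s$ turns \eqref{EQ_ini-LK-ODE} into $\dot{\tilde v}(\tau)=-\phi(\tilde v(\tau),t-\tau)$, matching the sign convention of \eqref{eq:ODEAuto}; since each $\phi(\cdot,r)$ is an infinitesimal generator by \ref{HVF1}, this non-autonomous field is semicomplete, so the flow stays in $\UD$ for all $\tau\ge0$. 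It then remains to verify \ref{REF1}--\ref{REF3}: \ref{REF1} is the terminal condition, \ref{REF2} follows from uniqueness of solutions (both $v_{s,u}$ and $v_{s,t}\circ v_{t,u}$ solve the same terminal-value problem on $[0,t]$), and \ref{REF3} follows from the integral form $v_{s,t}(z)=z-\int_s^t\phi(v_{r,t}(z),r)\,\di r$ together with \ref{HVF3}, which produces the dominating function $f_z$.

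\emph{Main obstacle.} The genuinely hard part is the first implication: manufacturing a bona fide Herglotz vector field out of the merely pointwise, a.e.-in-time data of \ref{REF3}. Two points require care. First, one must upgrade pointwise-in-$z$, a.e.-in-$s$ differentiability to joint regularity with a \emph{single} exceptional null set and locally uniform convergence, so that the limit is holomorphic in $z$ and measurable in $s$; this is where a Vitali-type covering argument, Cauchy estimates, and separability must be combined. Second, in the converse one must establish the semicompleteness that keeps solutions inside $\UD$ on all of $[0,t]$, and it is precisely the generator property \ref{HVF1}, through the Berkson--Porta form and the convex-cone structure of $\Gen$, that does this work.
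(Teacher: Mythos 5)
First, a remark on the comparison itself: the paper does not prove this statement --- it is quoted (as the sentence introducing it says) from \cite{CDG_decr}, with the underlying technical machinery coming from \cite{BCM1}. So your attempt can only be measured against the route taken in those sources, and in architecture it does reconstruct that route: the backward difference quotient as the candidate vector field, Vitali/normality plus the closed-cone property of generators for the first implication, and Carath\'eodory ODE theory plus semicompleteness (flow invariance) for the converse. Your converse direction is essentially right, with one routine omission: verifying \ref{REF3} from the integral form also requires a Gronwall estimate, since the difference $\phi(v_{r,u}(z),r)-\phi(v_{r,t}(z),r)$ appears under the integral and must be absorbed using a local Lipschitz bound coming from \ref{HVF3}.

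The genuine gap is the step you dispatch with ``Cauchy's estimates'': upgrading the pointwise-in-$z$ bounds of \ref{REF3} to bounds locally uniform in $z$. Cauchy's estimates cannot perform this upgrade --- they require control of a holomorphic function on circles, whereas here one only has bounds at individual points, the functions $v_{s,u}-v_{s,t}$ being otherwise merely bounded. Moreover, control at a single point is genuinely insufficient: a rotation $z\mapsto e^{i\theta}z$ has zero displacement at the origin but displacement comparable to $\theta$ on every other compact set, so no one-point hypothesis, however cleverly differentiated, yields a locally uniform estimate. What is actually needed is a quantitative rigidity statement of Schwarz--Pick type (a self-map of $\UD$ that nearly fixes two distinct interior points is close to the identity on compacta, in a controlled way); this, combined with the finite-horizon time reversal that turns reverse evolution families into evolution families, is exactly the content of \cite[Proposition~3.5]{BCM1} and \cite[Proposition~4.3]{CDG_decr} --- the very results the present paper cites when it asserts that absolutely continuous reverse evolution families are topological. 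Note also that this upgrade is needed earlier in your argument than you acknowledge: \ref{REF3} controls increments in the \emph{second} parameter only, so to prove that $s\mapsto v_{s,t}(z)$ is even absolutely continuous you must write, via \ref{REF2}, $v_{s,t}(z)-v_{s',t}(z)=v_{s,s'}(w')-w'$ with $w'=v_{s',t}(z)$ moving along the orbit, and you then need a dominating function valid uniformly for $w'$ in a compact set, not one attached to a fixed point. The same issue affects your reduction to $\UD$: conjugation by a Riemann map preserves \ref{REF3} only once the orbits $\{v_{s,t}(z)\}$ are known to be relatively compact in $D$ on finite time horizons, which comes from the continuity \ref{REF3'} (itself part of what is being proved), not from Koebe distortion. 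In short, the skeleton is correct and matches \cite{BCM1,CDG_decr}, but the single lemma that carries the first implication is missing, and the tool you name in its place could not prove it.
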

As clear from the definition of a Herglotz vector field, the r.h.s. of the equation \eqref{EQ_ini-LK-ODE} is holomorphic and hence locally Lipschitz in the ``spatial'' complex variable~$z$, but it is not supposed to be even continuous in the temporal variable~$s$.
This equation should be, therefore, understood as a Carath\'eodory-type ODE; see e.g.\ \cite[Section~2]{CDG-AnnulusI} for the basic theory of such~ODEs.
It is further worth mentioning that the ODE~\eqref{EQ_ini-LK-ODE} can be replaced by the following PDE, see~\cite[Remark~B.1]{GHP}:
\begin{equation}\label{EQ_ini-revLK-PDE}
\frac{\partial v_{s,t}(z)}{\partial t}+\phi(z,t)\frac{\partial v_{s,t}(z)}{\partial z}=0,\quad \text{a.e.\ } t\in  [s,\infty),~z\in D;\qquad v_{s,s}=\id_D.
\end{equation}
In Loewner Theory, \eqref{EQ_ini-LK-ODE} and \eqref{EQ_ini-revLK-PDE} are known as the (generalized) Loewner\,--\,Kufarev differential equations.
In the context of branching processes, see Theorems~\ref{thm:main_BP} and~\ref{thm:main_BP_D}, equations \eqref{EQ_ini-LK-ODE} and \eqref{EQ_ini-revLK-PDE} correspond to \textsl{Kolmogorov's  backward and forward equations}, respectively.

\smallskip\noindent{\bf Convention.~}~
In equation~\eqref{EQ_ini-revLK-PDE} and everywhere in what follows, ``a.e.'' means that the relation holds except for a null set on the time axis  {\fontseries{bx}\selectfont\textit{not depending}} on the complex variable(s).\smallskip

For more details on the dynamical properties of holomorphic self-maps of complex domains we refer interested readers to an older book~\cite{Abate:book} and two recent monographs~\cite{Abate2,BCD-Book}. The modern version of Loewner Theory used in this paper is covered in~\cite{BCM1,CDG_LCh,CDG_decr}. A more condensed account, following the same notations and conventions as in the present paper, can be found in~\cite{GHP}. Finally, to make the exposition self-contained, we include some material on boundary  fixed points of holomorphic self-maps in Appendix~\ref{App-Fixed}.

\subsection{Branching processes}\label{SS_branching-processes}
Let $S$ be a Polish space. Denote by $\mathcal B(S)$ the $\sigma$-field of all Borel sets in~$S$ and by $\Pr(S)$ the set of all Borel probability measures on~$S$.
A \textsl{transition kernel} $k$ on $S$ is a map $k\colon S \times \mathcal{B}(S) \to [0,1]$ such that $k(x, \cdot) \in \Pr(S)$ for each $x \in S$ and  $x\mapsto k(x,B)$ is Borel measurable for each $B \in \mathcal{B}(S)$. For two transition kernels $k,l$ on $S$ their \textsl{composition} is defined by
\[
(k\starM  l)(x,B) = \int_S l(y,B) k(x, \di y) , \qquad x \in S, ~B \in  \mathcal{B}(S).
\]
Note that for all $x \in S$ and all non-negative Borel measurable functions $f\colon S \to [0,\infty]$,
\begin{equation}\label{eq:fubini}
\int_S f(z) \,(k\starM  l)(x,\di z) = \int_S \left[ \int_S f(z)\, l(y,\di z) \right] k(x, \di y).
\end{equation}

\smallskip
Let $(k_{s,t})_{(s,t)\in\Delta}$ be a family of transition kernels such that
\begin{enumerate}[label=\rm(K\arabic*)]
\item\label{k1} $k_{s,s}(x,\cdot)= \delta_x(\cdot)$ for every $s \ge 0$ and $x \in S$,~\,~ and
\item \label{CK} $k_{s,t}\starM k_{t,u}=k_{s,u}$ for every $0 \le s \le t \le u$. \quad (Chapman\,--\,Kolmogorov equation)
\end{enumerate}
Then a \emph{Markov family} on $S$ with transition kernels $(k_{s,t})_{(s,t)\in\Delta}$ is a stochastic process $(Z_t)_{t\ge 0}$ which is defined on a measure space $(\Omega, \mathcal F)$ equipped with a family $$\{\mathcal F_I: I \text{~is a closed interval of $[0,\infty)$}\}$$ of sub-$\sigma$-fields  of $\mathcal F$ and probability measures $\mathbb P^{(s,x)}$ on $\big(\Omega, \mathcal F_{[s,\infty)}\big)$ for each~${s \ge0}$~and~${x \in S}$ satisfying the following four conditions:
 $~\,\mathcal F_I \subseteq \mathcal F_J$ if $I \subseteq J$,  $~\,Z_t$ is $\mathcal F_{I}$-measurable if~${t \in I}$,
\begin{align}
&\mathbb P^{(s,x)}\big[Z_s=x\big]=1  \qquad \text{for all~}s\ge0,~x\in S,\qquad \text{and}  \label{eq:initial} \\
&\mathbb E^{(s,x)}\big[f(Z_u) \big| \mathcal F_{[s,t]}\big] = \int_S f(y) \,k_{t,u}(Z_t, \di y) \qquad \mathbb P^{(s,x)}\text{\,-\,a.s.}  \label{eq:Markov}
\end{align}
for all~$0 \le s \le t \le u$, $x \in S$, and all bounded measurable functions ${f\colon\! (S, \mathcal B(S))\to (\R,\mathcal B(\R\mathrlap{)).}}$
\smallskip

When $k_{s,t}=k_{0,t-s}$ for all ${(s,t)\in\Delta}$, the transition kernels and the corresponding Markov family are said to be \textsl{time-homogeneous}.\smallskip

Formulas \eqref{eq:initial} and \eqref{eq:Markov} imply the identity
\begin{equation}\label{eq:FD}
\mathbb P^{(s,x)}[(Z_{t_1}, Z_{t_2},\dots, Z_{t_n}) \in B]=\int_{B} k_{s,t_1}(x, \di x_1) k_{t_1,t_2}(x_1, \di x_2) \cdots k_{t_{n-1}, t_n}(x_{n-1}, \di x_n)
\end{equation}
for all $B \in \mathcal B (S^n), n \in \N, 0 \le s \le  t_1 \le \cdots \le t_n$ and $x \in S$, see \cite[Problem 8.11]{Wen81}. The l.h.s. of~\eqref{eq:FD} is called a finite dimensional distribution of $(Z_t)$. For further details on Markov families, we refer the reader to \cite{Wen81}.
\begin{definition}
By a \textsl{time-inhomogeneous continuous-state branching process} (simply called a \textsl{branching process} below) we mean a Markov family on $S:=[0,\infty]$ (regarded as a compact set) with transition kernels $(k_{s,t})_{(s,t)\in\Delta}$ satisfying \ref{k1} and \ref{CK} above as well as the following four conditions:
\begin{enumerate} [label=\rm(K\arabic*)]
\setcounter{enumi}{2}
\item\label{k3} the map $\Delta \times [0,\infty)\ni (s,t,x)\mapsto k_{s,t}(x,\cdot) \in \Pr([0,\infty])$ is weakly continuous,
\item \label{eq:branching} $k_{s,t} (x, \cdot) \ast k_{s,t}(y, \cdot) = k_{s,t}(x+y,\cdot)$ for any~$(s,t)\in\Delta$, $x,y \in [0,\infty),$ where as usual, we denote by~$\ast$ the convolution of probability measures on $[0,\infty]$,
\item\label{k5} $k_{s,t}(0,\cdot)= \delta_0(\cdot)$ for every $(s,t)\in\Delta$,
\item\label{k6} $k_{s,t}(\infty, \cdot) = \delta_\infty(\cdot)$ for every $(s,t)\in\Delta$.
\end{enumerate}
Such a family $(k_{s,t})_{(s,t)\in\Delta}$ on $[0,\infty]$ that satisfies \ref{k1}--\ref{k6} will be called a \textsl{family of transition kernels of a branching process}. Condition \ref{eq:branching} is called \textsl{branching property}.
\end{definition}
\begin{remark}\label{RM_branching-process-from-trans-probas}
Given a family of transition kernels~$(k_{s,t})$ satisfying \ref{k1} and \ref{CK}, one can construct a corresponding Markov family in a canonical way. Let $\Omega:=S^{[0,\infty)}$ be the set of all functions from $[0,\infty)$ to $S$ and $(Z_t)_{t\ge0}$ be the coordinate process on~$\Omega$, i.e.\ ${Z_t(\omega):=\omega(t)}$, ${t\ge0}$, ${\omega \in \Omega}$.  Let $\mathcal F$ and $\mathcal F_{I}$ be the $\sigma$-fields generated by the sets $\{Z_t^{-1}(B)\colon B \in \mathcal B(S), t\ge0 \}$ and  $\{{Z_t^{-1}(B)}\colon {B \in \mathcal B(S)},~{t \in I}\}$, respectively. Then one can assign to each $(s,x)$ a probability measure $\mathbb P^{(s,x)}$ on $(\Omega, \mathcal F_{[s,\infty)})$ in such a way that the coordinate process $(Z_t)_{t\ge0}$ becomes a Markov family with the transition kernels~$(k_{s,t})$, see \cite[Theorem 8.2]{Wen81}. If furthermore the transition kernels~$(k_{s,t})$ satisfy \ref{k3}--\ref{k6} then $(Z_t)$ is a branching process by definition.
\end{remark}

\section{Time-inhomogeneous branching processes}\label{sec2}
In this section, we will establish our main results concerning time-inhomogeneous branching processes on a continuous one-dimensional state space.

Properties \ref{k5} and \ref{k6} in the definition of a time-inhomogeneous continuous-state branching process, adopted in Section~\ref{SS_branching-processes}, intuitively mean that $0$ and $\infty$ are absorbing points for the process.
Properties~\ref{k3}, \ref{eq:branching},  \ref{k5} and~\ref{k6} imply that each probability measure $k_{s,t}(x,\cdot)$ is the law of a killed subordinator at ``time'' $x$, see  e.g.\ \cite[p.\,48]{SSV12}. Therefore, see e.g.\ \cite[Theorem 5.2]{SSV12}, for any $(s,t)\in\Delta$ there exists a unique Bernstein function ${v_{s,t}\colon(0,\infty) \to [0,\infty)}$, called the \textsl{Laplace exponent} of~$k_{s,t}$, such that
\begin{equation}\label{eq:laplace}
\LpT{k_{s,t}(x,\cdot)}(\theta):=\int_{[0,\infty)} e^{-\theta y} k_{s,t}(x, \di y) = e^{-x v_{s,t}(\theta)}, \qquad x,\theta \in (0,\infty).
\end{equation}
Recall that any Bernstein function extends to a self-map of $[0,\infty)$; then the above formula can be extended to $\theta \in [0,\infty)$ by the monotone convergence theorem.

\subsection{Branching processes and reverse evolution families}\label{SS_topological}
The transition kernels of branching processes are characterized by $v_{s,t}$'s as follows.

\begin{theorem}\label{thm:REF_BP}  Given a family $(k_{s,t})_{(s,t)\in\Delta}$ of transition kernels of a branching process, the Laplace exponents $v_{s,t}$ defined by \eqref{eq:laplace} form a topological reverse evolution family~$(v_{s,t})_{(s,t)\in\Delta}$ contained in $\BF$.
Conversely, given a topological reverse evolution family ${(v_{s,t})_{(s,t)\in\Delta}\subset\BF}$, there exists a unique family of
transition kernels $(k_{s,t})_{(s,t)\in \Delta}$ of a branching process such that \eqref{eq:laplace} holds.
\end{theorem}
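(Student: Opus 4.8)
The plan is to prove the two implications separately, using the Laplace transform relation~\eqref{eq:laplace} as the bridge in both directions; uniqueness of the Laplace transform of a measure on $[0,\infty)$ will be the recurring tool that converts identities between transition kernels into identities between the exponents $v_{s,t}$ and vice versa. Throughout, the delicate bookkeeping is the mass that the kernels may put at the absorbing point $\infty$, and a careful use of the compactness of $[0,\infty]$ will handle it.

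For the forward direction I would begin with the fact recorded in Remark~\ref{RM_Laplace-exp-BF} that \ref{k3}--\ref{k6} already force each $v_{s,t}$ to be a well-defined Bernstein function. Property \ref{REF1} is immediate from \ref{k1}: since $k_{s,s}(x,\cdot)=\delta_x$, formula~\eqref{eq:laplace} gives $e^{-xv_{s,s}(\theta)}=e^{-x\theta}$, whence $v_{s,s}=\id$. For \ref{REF2} I would feed the Chapman--Kolmogorov equation \ref{CK} into~\eqref{eq:laplace}: applying~\eqref{eq:fubini} to $f(z)=e^{-\theta z}$ on the compact state space $[0,\infty]$, where $e^{-\theta\cdot\infty}=0$ for $\theta>0$ annihilates any mass sitting at $\infty$, the inner integral collapses to $e^{-yv_{t,u}(\theta)}$ and the outer one to $e^{-xv_{s,t}(v_{t,u}(\theta))}$; comparing with $e^{-xv_{s,u}(\theta)}$ and letting $x$ range over $(0,\infty)$ yields $v_{s,u}=v_{s,t}\circ v_{t,u}$. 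The compactification is precisely what lets this computation go through without any a priori positivity of the exponents.

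The step I expect to be the main obstacle is showing that each $v_{s,t}$ actually lies in $\BF$, i.e.\ that $v_{s,t}\not\equiv0$; this is the positivity claim flagged in Remark~\ref{RM_weak-cont-at-x=infty}. I would argue by contradiction. Reading off~\eqref{Bernstein}, a Bernstein function vanishing at one point of $(0,\infty)$ vanishes identically, so a non-trivial Bernstein function is strictly positive on $(0,\infty)$, and a composition of such maps is again strictly positive there. If $v_{a,b}\equiv0$ for some $a<b$, then using the continuity \ref{REF3'} and $v_{r,r}=\id$, a compactness argument on the diagonal segment $\{(r,r):r\in[a,b]\}$ produces a partition $a=t_0<\dots<t_n=b$ fine enough that every factor satisfies $v_{t_i,t_{i+1}}(1)>\tfrac12$ and is therefore non-trivial; but then $v_{a,b}=v_{t_0,t_1}\circ\cdots\circ v_{t_{n-1},t_n}$ is strictly positive on $(0,\infty)$, a contradiction. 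Finally, \ref{REF3'} itself I would deduce from \ref{k3}: for fixed $\theta>0$ the map $(s,t)\mapsto e^{-v_{s,t}(\theta)}=\int_{[0,\infty]}e^{-\theta y}k_{s,t}(1,\di y)$ is continuous since $e^{-\theta\cdot}\in C([0,\infty])$, hence $(s,t)\mapsto v_{s,t}(\theta)$ is continuous for each $\theta$; pointwise continuity on $(0,\infty)$ upgrades to continuity into $\Hol(\UH,\UH)$ because that family is normal, which is exactly the equivalence of \ref{REF3'} and \ref{REF3''}.

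For the converse I would build the kernels from the exponents. For $(s,t)\in\Delta$ and $x\in(0,\infty)$ the function $\theta\mapsto e^{-xv_{s,t}(\theta)}$ is completely monotone, as $xv_{s,t}\in\BF$, so by Bernstein's theorem it is the Laplace transform of a sub-probability measure on $[0,\infty)$ of total mass $e^{-xv_{s,t}(0^+)}$; I would define $k_{s,t}(x,\cdot)$ to be this measure together with the remaining mass $1-e^{-xv_{s,t}(0^+)}$ placed at $\infty$, and set $k_{s,t}(0,\cdot)=\delta_0$, $k_{s,t}(\infty,\cdot)=\delta_\infty$. Measurability and the weak continuity \ref{k3} follow from continuity of $(s,t,x)\mapsto e^{-xv_{s,t}(\theta)}$ together with a Stone--Weierstrass argument, the exponentials $e^{-\theta\cdot}$ with $\theta\ge0$ being dense in $C([0,\infty])$ and separating the atom at $\infty$. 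Properties \ref{k1}, \ref{k5}, \ref{k6} are read off the construction using $v_{s,s}=\id$, while \ref{eq:branching} and \ref{CK} are verified by checking that both sides have the same Laplace transform, namely $e^{-xv_{s,t}(\theta)}e^{-yv_{s,t}(\theta)}=e^{-(x+y)v_{s,t}(\theta)}$ for the former and the same Fubini computation as above for the latter, now using $v_{s,u}=v_{s,t}\circ v_{t,u}$ and the automatic positivity $v_{t,u}>0$ on $(0,\infty)$ coming from $v_{t,u}\in\BF$, and then invoking uniqueness of the Laplace transform together with matching of total masses to pin down the atom at $\infty$. This same uniqueness argument yields uniqueness of the family $(k_{s,t})$.
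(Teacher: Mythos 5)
Your proposal is correct, and its overall skeleton matches the paper's: Laplace-transform uniqueness as the two-way bridge, the Fubini identity \eqref{eq:fubini} to convert the Chapman--Kolmogorov equation into the composition law, and normality/Vitali to upgrade pointwise continuity of $(s,t)\mapsto v_{s,t}(\theta)$ on $(0,\infty)$ into condition \ref{REF3'}. You deviate in three sub-arguments, all legitimately. First, for the key non-degeneracy step ($v_{s,t}\not\equiv 0$, so that $v_{s,t}\in\BF$), the paper uses a first-degeneracy-time argument: assuming $v_{s_0,t_0}\equiv0$, it sets $t_1:=\min\{t\in(s_0,t_0]: v_{s_0,t}\equiv0\}$, deduces $v_{t,t_1}\equiv0$ for all $s_0\le t<t_1$, and lets $t\uparrow t_1$ to contradict $v_{t_1,t_1}=\id$; your Lebesgue-number argument along the diagonal, producing a partition whose increments all satisfy $v_{t_i,t_{i+1}}(1)>\tfrac12$ and hence are strictly positive on $(0,\infty)$, reaches the same contradiction by composing the increments. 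Both hinge on exactly the same ingredients (pointwise continuity plus $v_{r,r}=\id$), and both are sound. Second, for the passage from continuity of the exponents to the weak continuity \ref{k3} of the kernels, the paper goes through vague convergence of sub-probability measures on $[0,\infty)$, citing \cite[Lemma A.9]{SSV12}, and then restores the mass at infinity; your Stone--Weierstrass argument on $C([0,\infty])$ is more self-contained and equally valid, since the linear span of $\{e^{-\theta\,\cdot}\colon\theta\ge0\}$ is a point-separating subalgebra containing the constants, and all the measures involved have total mass~$1$. Third, in the converse direction the paper simply invokes the theory of killed subordinators, \cite[Theorem 5.2]{SSV12}, to produce the kernels, whereas you rebuild them by hand from Bernstein's theorem on completely monotone functions plus an atom at infinity carrying the missing mass $1-e^{-xv_{s,t}(0^+)}$ --- the same content, unpacked. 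Two small points of care you should make explicit in a final write-up: the composition law on $[0,\infty)$ has to be read with $v_{t,u}(0)$ and $v_{s,t}(0)$ understood as limits as $\theta\to0^+$ (your compactification remark is precisely what makes the Fubini computation legitimate before positivity is known), and the identities \ref{REF1}--\ref{REF2}, first obtained on $(0,\infty)$, extend to all of $\UH$ by the identity principle, as the paper notes.
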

\begin{proof}
We divide the proof in three steps.

\Step{1}
First we suppose that an arbitrary family $(k_{s,t})_{(s,t)\in\Delta}$ of transition kernels on~$[0,\infty]$, not necessarily related to  a branching process, and a family $(v_{s,t})_{(s,t)\in\Delta}$ of maps from $(0,\infty)$ to $[0,\infty)$ are given and related by \eqref{eq:laplace}, i.e.\ for each $x>0$ and $(s,t)\in\Delta$, the function $\exp(-x v_{s,t}(\cdot))$ is the Laplace transform~$\LpT{k_{s,t}(x,\cdot)}$ of the measure $k_{s,t}(x,\cdot)$. In particular, $v_{s,t}$ is non-decreasing and hence extending $v_{s,t}$ to $\theta=0$ by $v_{s,t}(0):=\lim_{\theta \to0^+}v_{s,t}(\theta)$, we may assume that $v_{s,t}$ is a self-map of $[0,\infty)$ and that \eqref{eq:laplace} holds for all~${\theta \ge0}$. Under this circumstance we prove that the following three conditions:
\begin{enumerate}[label=\rm(C\arabic*)]
\item\label{R1} $v_{s,s}=\id_{[0,\infty)}$ for any~$s\ge 0 $,

\item \label{R2} $v_{s,u}=v_{s,t}\circ v_{t,u}$ on $[0,\infty)$ for any ${0\le s\le t \le u}$,

\item \label{R3} for every $\theta \in (0,\infty)$, the map $\Delta\ni(s,t)\mapsto v_{s,t}(\theta) \in [0,\infty)$ is continuous,
\end{enumerate}
 are equivalent to \ref{k1}--\ref{k3} for $(k_{s,t})_{(s,t)\in\Delta}$.  Recall that a bounded Borel measure on $[0,\infty)$ is uniquely determined by its Laplace transform, see e.g.~\cite[Proposition~1.2]{SSV12}. Hence, it is clear that \ref{R1} is equivalent to \ref{k1}. Furthermore, \ref{R2} is equivalent to the Chapman\,--\,Kolmogorov relation~\ref{CK},  because thanks to \eqref{eq:fubini} we have
\begin{align*}
e^{-x v_{s,t}(v_{t,u}(\theta))}
&=  \int_{[0,\infty)} e^{-  z v_{t,u}(\theta)} k_{s,t}(x,\di z)  \\
&=   \int_{[0,\infty)} \left( \int_{[0,\infty)}e^{-  \theta y} k_{t,u}(z,\di y)\right) k_{s,t}(x,\di z)
=  \int_{[0,\infty)} e^{-  \theta y} (k_{s,t}\starM k_{t,u})(x,\di y).
\end{align*}
The implication \ref{k3} $\Rightarrow$ \ref{R3} is rather obvious from the definition of weak convergence: indeed, the map $y\mapsto e^{-\theta y}$ can be regarded as an element of $C[0,\infty]$ vanishing at $\infty$ as long as $\theta >0$, and hence, we can write $e^{-x v_{s,t}(\theta)} =\int_{[0,\infty]} e^{-  \theta y} k_{s,t}(x, \di y) $. For the converse implication, one can use the fact that the pointwise convergence of the Laplace transforms $\LpT{k_{s,t}(x,\cdot)}$ on $(0,+\infty)$ as $(s,t,x)\to(s_0,t_0,x_0) \in \Delta \times [0,\infty)$  implies the vague convergence $k_{s,t}(x,\cdot) \to k_{s_0,t_0}(x_0,\cdot)$ as \emph{sub-probability measures on $[0,\infty)$}, see e.g.\ \cite[Lemma~A.9]{SSV12}. This  further implies the weak convergence $k_{s,t}(x,\cdot) \to k_{s_0,t_0}(x_0,\cdot)$ as \emph{probability measures on $[0,\infty]$}, because for every $f\in C[0,\infty]$,
\[
\int_{[0,\infty]} f(y) \,k_{s,t}(x,\di y) = \int_{[0,\infty)} [f(y)-f(\infty)] \,k_{s,t}(x,\di y) +f(\infty),
\]
and $f-f(\infty)$ is a continuous function on $[0,\infty]$ vanishing at $\infty$, and hence the integral in the r.h.s.\ above converges to $\int_{[0,\infty)} [f(y)-f(\infty)] \,k_{s_0,t_0}(x_0,\di y)$ as $(s,t,x)\to(s_0,t_0,x_0)$ by the vague convergence of $k_{s,t}(x,\cdot)$.

\Step{2}
Now we suppose that $(k_{s,t})_{(s,t)\in\Delta}$ is a family of transition kernels of a branching process, i.e.\ the conditions \ref{k1}--\ref{k6} hold.  Then as explained at the beginning of Section~\ref{sec2}, there exists a family $(v_{s,t})_{(s,t)\in\Delta}$ of Bernstein functions satisfying~\eqref{eq:laplace}. As we have proved in \StepN{1}, \ref{k1}--\ref{k3} imply that conditions \ref{R1}--\ref{R3} hold.

Each of the Bernstein functions $v_{s,t}$ either extends to a holomorphic self-map of~$\UH$ or vanishes identically, see Section~\ref{subsec:Bernstein}.  Let us show that all $v_{s,t}$'s extend to self-maps of~$\UH$.
Suppose to the contrary that ${v_{s_0,t_0}\equiv0}$  for some ${0 \le s_0 <t_0}$. Let  $t_1\coloneqq  \min\{t \in(s_0,t_0]: v_{s_0,t} \equiv 0\}$, where the minimum exists thanks to the continuity~\ref{R3}.
Then for any ${t\in[s_0,t_1)}$ the function $v_{s_0,t}$ does not vanish in~$\UH$ and as a result, the relation $v_{s_0, t} \circ v_{t ,t_1}={v_{s_0,t_1}\equiv 0}$ on $[0,\infty)$ implies that $v_{t, t_1}\equiv0$.
Passing to the limit as $t\to t_1^-$ yields that $v_{t_1,t_1}\equiv0$, a contradiction. Therefore, one concludes that all $v_{s,t}$'s are holomorphic self-maps of~$\UH$  and hence $(v_{s,t})\subset\BF$.

Conditions \ref{R1} and \ref{R2} imply \ref{REF1} and \ref{REF2} by the identity principle for holomorphic functions. Condition~\ref{REF3'} follows from \ref{R3} due to Vitali's theorem, see e.g.\  \cite[\S\,I.2]{Goluzin}, applied to the functions $f_{s,t}:=H\circ v_{s,t}$, where ${H(z):=(z-1)/(z+1)}$ maps~$\UH$ conformally onto~$\UD$, and due to the uniqueness of the holomorphic extension. Thus, $(v_{s,t})$ is a topological reverse evolution family contained in~$\BF$.

\Step{3}
Conversely, given a topological reverse evolution family $(v_{s,t})_{(s,t)\in\Delta}$ contained in~$\BF$, a unique family $(k_{s,t}(x,\cdot))_{(s,t)\in\Delta, x \in [0,\infty]}$ of probability measures on $[0,\infty]$ satisfying \eqref{eq:laplace}, \ref{eq:branching}, \ref{k5}, and \ref{k6} does exist thanks to the theory of subordinators \cite[Theorem~5.2]{SSV12}. It remains to notice that by
 \StepN{1}, conditions \ref{REF1}, \ref{REF2}, and \ref{REF3'} in the definition of a topological reverse evolution family imply conditions \ref{k1}--\ref{k3}.
\end{proof}

The construction of branching processes with prescribed family of transition kernels~$(k_{s,t})$ given in Remark~\ref{RM_branching-process-from-trans-probas} is sometimes not enough. For example, when we later discuss hitting times to~$0$ and $\infty$, the monotonicity of events $\{Z_t =0\}$ w.r.t.\ $t\ge0$ is helpful. Standard technique of supermartingales enables to establish existence of a ``better'' branching process.
\newcommand{\Xc}{X^{\mathrm{c}}} Namely, for $s\in[0,\infty)$, denote by $D[s,\infty)$ be the set of all \cadlag functions ${\omega\colon[s,\infty) \to [0,\infty]}$ satisfying  the following condition: if $r:=\omega(t)\in \{0,\infty\}$ for some ${t\ge s}$ or $r:=\omega(t-)\in \{0,\infty\}$ for some ${t>s}$, then $\omega(u)=r$ for all~${u\ge t}$.

Consider the coordinate process $(\Xc_t)_{t\ge0}$ defined by $\Xc_t(\omega):=\omega(t)$, $\omega \in D[0,\infty)$, and let $(\mathcal G_I)_I$ stand for its natural filtration, i.e. $\mathcal G_I:=\sigma(\Xc_t: t \in I)$ for any closed interval ${I\subset [0,\infty)}$. Finally, let ${\mathcal G: = \mathcal G_{[0,\infty)}}$.

\begin{theorem}\label{thm:cadlag} Let $(k_{s,t})$ be a family of transition kernels satisfying conditions \hbox{\ref{k1}--\ref{k6}}. Then there exists a unique map $(s,x)\mapsto \mathbb Q^{(s,x)}$ assigning to each ${s \ge 0}$ and ${x\in[0,\infty]}$ a probability measure $\mathbb Q^{(s,x)}$ on the measurable space $\big(D[0,\infty),\mathcal G_{[s,\infty)}\big)$ such that
$\big((\Xc_t)_{t\ge0}, D[0,\infty), \mathcal G, (\mathcal G_I)_I, (\mathbb Q^{(s,x)})_{s\ge0,\, x \in [0,\infty]}\big)$ is a branching process on~$[0,\infty]$ with the transition kernels~$(k_{s,t})$.
\end{theorem}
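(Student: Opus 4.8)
The plan is to start from the canonical Markov family. By Remark~\ref{RM_branching-process-from-trans-probas} the kernels $(k_{s,t})$ give a Markov family $(Z_t)_{t\ge0}$ on $\Omega=[0,\infty]^{[0,\infty)}$ with measures $\mathbb P^{(s,x)}$; the task is then to produce a càdlàg, suitably absorbed modification of $Z$ and to transport its law to $D[0,\infty)$. Uniqueness is the easy part and I would dispose of it first: any probability measure on $\big(D[0,\infty),\mathcal G_{[s,\infty)}\big)$ is determined by the finite-dimensional distributions of the coordinate process $(\Xc_t)$, and the requirement that $(\Xc_t)$ be a branching process with kernels $(k_{s,t})$ started at $x$ at time $s$ forces these to coincide with the expressions in~\eqref{eq:FD}. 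Hence $\mathbb Q^{(s,x)}$ is unique once it exists.

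The key analytic input for the regularization is an auxiliary martingale. Fix $\theta>0$, an integer $n$, and a pair $(s,x)$ with $s<n$. Using the Markov property~\eqref{eq:Markov} with the bounded function $y\mapsto e^{-y\,v_{u,n}(\theta)}$ and the reverse-evolution identity $v_{t,u}\circ v_{u,n}=v_{t,n}$ furnished by Theorem~\ref{thm:REF_BP}, one checks that
\[
N^{\theta,n}_t:=\exp\!\big(-Z_t\,v_{t,n}(\theta)\big),\qquad t\in[s,n],
\]
is a $[0,1]$-valued martingale with respect to $(\mathcal F_{[s,t]})_t$ under $\mathbb P^{(s,x)}$ (at $Z_t=\infty$ both sides vanish, using $v_{t,n}(\theta)>0$, the positivity being the one recorded in Step~2 of the proof of Theorem~\ref{thm:REF_BP}). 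Applying Doob's upcrossing/regularization bound along the rationals to $N^{1,n}$ for every integer $n$, I obtain a single $\mathbb P^{(s,x)}$-null set off which all one-sided rational limits of $N^{1,n}$ exist for every $t$ and every $n$. Since $t\mapsto v_{t,n}(1)$ is continuous and bounded away from $0$ on $[0,n]$ by~\ref{REF3'} and the positivity above, and since $(\xi,c)\mapsto -c^{-1}\log\xi$ is continuous from $[0,1]\times(0,\infty)$ into the compact space $[0,\infty]$, the identity $Z_q=-v_{q,n}(1)^{-1}\log N^{1,n}_q$ transfers these one-sided limits to $Z$ itself along the rationals, for every $t\in[0,\infty)$. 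Setting $\tilde Z_t:=\lim_{q\downarrow t,\,q\in\mathbb Q}Z_q$ then yields a process with càdlàg paths.

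It remains to identify the law of $\tilde Z$ and to verify the absorption constraint. To see that $\tilde Z$ has the finite-dimensional distributions~\eqref{eq:FD}, fix $t_1<\dots<t_m$, choose rationals $q_i\downarrow t_i$ with $q_1<\dots<q_m$, and note that $(Z_{q_1},\dots,Z_{q_m})\to(\tilde Z_{t_1},\dots,\tilde Z_{t_m})$ almost surely, while the law of the former is~\eqref{eq:FD} with times $q_i$, which converges weakly to the one with times $t_i$ by the weak continuity~\ref{k31} of the kernels and of their compositions. Thus $\tilde Z$ has exactly the prescribed finite-dimensional distributions, so under the pushforward $\mathbb Q^{(s,x)}:=\mathbb P^{(s,x)}\circ(\tilde Z)^{-1}$ the coordinate process is, as in Remark~\ref{RM_branching-process-from-trans-probas}, a branching process with kernels $(k_{s,t})$; in particular $\mathbb Q^{(s,x)}[\Xc_s=x]=1$. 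For the absorption property defining $D[0,\infty)$, I would argue via the same martingale: on $\{\tilde Z_t=0\}$ the càdlàg martingale $N^{\theta,n}$ equals its maximal value $1$ at time $t$, and a non-negative martingale bounded by $1$ and equal to $1$ at $t$ cannot decrease afterwards, whence $\tilde Z_u=0$ for all $u\ge t$ a.s.; symmetrically $\{\tilde Z_t=\infty\}$ gives $N^{\theta,n}_t=0$ and $\tilde Z_u=\infty$ for $u\ge t$. The left-limit cases $\tilde Z_{t-}\in\{0,\infty\}$ reduce to these by the standard fact that a càdlàg martingale has no fixed jump at a deterministic time. Hence $\tilde Z\in D[0,\infty)$ a.s., and $\mathbb Q^{(s,x)}$ is the desired measure.

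I expect the principal obstacle to be the interface between the regularity of the auxiliary martingales and the regularity and law of $Z$ itself. The a.s. existence of one-sided limits transfers to $Z$ only because the weights $v_{q,n}(1)$ are continuous and bounded away from $0$, and the identification of the law of the right-limit process genuinely relies on the weak continuity~\ref{k31} of the kernel \emph{compositions} rather than on continuity at each fixed time alone; keeping the various arguments pinned to a single null set as $n$ and the time indices vary is where the bookkeeping is most delicate.
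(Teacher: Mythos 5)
Your construction is, up to the last step, the paper's own: you start from the canonical Markov family of Remark~\ref{RM_branching-process-from-trans-probas}, build exponential martingales from the Laplace exponents, regularize along the rationals by Doob's theorem, and push the law forward to $D[0,\infty)$. Your martingale $\exp(-Z_t\,v_{t,n}(\theta))$ is in fact the same as the paper's $\exp(-v_{0,t}^{-1}(\theta_N)Z_t)$ (take $\theta_N=v_{0,N}(\theta)$ and use \ref{REF2}); your parametrization is slightly cleaner since it avoids inverting $v_{0,t}$. Where you genuinely diverge is the identification of the law: you deduce the finite-dimensional distributions \eqref{eq:FD} by weak convergence along rational times, using \ref{k31} for iterated kernel integrals, whereas the paper proves directly that the regularized process is a \emph{modification} of $(Z_t)$ via the identity $\mathbb E^{(s,x)}[f(Y_{s,t})g(Z_t)]=\mathbb E^{(s,x)}[f(Z_t)g(Z_t)]$. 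Your route can be made to work (the continuity of compositions you invoke follows by a standard induction using compactness of $[0,\infty]$), and the null-set bookkeeping over $n$ and rational times that worries you is only a countable union, hence harmless.

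The genuine gap is in the absorption step, and it is a gap of quantifier order. Your argument yields: for each \emph{fixed deterministic} $t$, almost surely on $\{\tilde Z_t=0\}$ one has $\tilde Z_u=0$ for all $u\ge t$ (similarly for $\infty$ and for left limits). The exceptional null set depends on $t$, and the union over uncountably many $t$ need not be null; but membership in $D[0,\infty)$ requires a single almost-sure event on which absorption holds after \emph{every} $t$ --- in particular after the \emph{random} time at which the path first reaches $0$ or $\infty$. The two statements are truly different: a reflecting Brownian motion on $[0,\infty)$ hits $0$ at random times and leaves immediately, yet it satisfies your fixed-time statement vacuously because $\mathbb P[B_t=0]=0$ for every fixed $t>0$. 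To close the gap one must argue at the stopping time $T_p:=\inf\{t:\tilde N_t=p\ \text{or}\ \tilde N_{t-}=p\}$, $p\in\{0,1\}$, via optional sampling for the right-continuous regularized martingale; this is precisely what the paper imports from \cite[Chapter~II, Theorem~78.1]{RW}. Two further points feed into this. First, your reduction of the left-limit case rests on the claim that a c\`adl\`ag martingale has no fixed jump at a deterministic time; that is false in general (take $M_t:=X\ind_{[1,\infty)}(t)$ with $\mathbb E[X]=0$, $X\not\equiv0$), and even where it is true here --- it does hold in this setting, but as a consequence of the weak continuity of the kernels, not of the martingale property --- a fixed-time statement still cannot reach random hitting times. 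Second, the event $\{\tilde Z_t=0\}$ is measurable only for the right-continuous augmentation of $(\mathcal F_{[s,t]})_t$, so before even the fixed-time version of your argument runs, you need the regularized process to be a martingale with respect to the augmented filtration --- the point the paper settles by invoking \cite[Chapter~II, Proposition~67.6]{RW}, and which your write-up skips.
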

\noindent The proof of Theorem~\ref{thm:cadlag} is rather standard; however, for the sake of completeness, we provide it in \hbox{Appendix~\ref{App-Th2.7}.}

\subsection{An \,ODE\, for Laplace exponents}\label{subsec:ODE}
Now we turn to examining branching processes which correspond to absolutely continuous reverse evolution families. We start by establishing a basic ODE. In order to give a precise statement of the result, we have to introduce some notation and definitions.

In the following, we employ the notation $f^{(n)}(z,t)$, $f'(z,t)$, $f''(z,t)$ and so on for the derivatives w.r.t. the complex variable~$z$, while keeping the notation $\,\frac{\partial }{\partial t} f(z,t)\,$ or $\,\partial_t f(z,t)$ for the derivatives w.r.t.\ the real parameter~$t$.
Recall that a family of non-negative Borel measures $(\rho_t)_{t\ge0}$ on $[0,\infty)$ is a \textsl{Borel kernel} if the function $[0,\infty) \ni t\mapsto\rho_t(B) \in [0,\infty]$ is measurable for every Borel subset $B$ of ${[0,\infty)}$. Moreover, we say that a Borel kernel~$(\rho_t)$ is \textsl{locally integrable (in $t$)} if ${t\mapsto \rho_t(B)}$ is in $\Lloc$ for every Borel subset~$B$ of ${[0,\infty)}$.

\begin{definition}
A \textsl{L\'evy family} is a family of quadruples $(q_t,a_t,b_t,\pi_t)_{t\ge0}$ in which
${t\mapsto a_t\in\Real}$, $t\mapsto b_t\in[0,\infty)$, $t\mapsto q_t\in[0,\infty)$ are  $\Lloc$-functions on ${[0,\infty)}$ and $(\pi_t)_{t\ge0}$ is a family of non-negative Borel measures such that  $t\mapsto \min\{x^2,1\}\,\pi_t(\di x)$ on $(0,\infty)$ is locally integrable in $t$.

We identify two L\'evy families $(q_t,a_t,b_t,\pi_t)_{t\ge0}$ and $(\tilde q_t,\tilde a_t, \tilde b_t,\tilde \pi_t)_{t\ge0}$ if there exists a Lebesgue null set $N \subset [0,\infty)$ such that $(q_t,a_t,b_t,\pi_t)=(\tilde q_t,\tilde a_t, \tilde b_t,\tilde \pi_t)$ for all $t\in [0,\infty)\setminus N$.
\end{definition}

Now we are able to formulate our main result asserting, in particular, that under the absolute continuity assumption, the Laplace exponents of a branching process satisfy a particular form of the Loewner\,--\,Kufarev ODE.

\begin{theorem}\label{thm:main_BP}
Let $(v_{s,t})_{(s,t)\in\Delta}$ be an absolutely continuous reverse evolution family contained in~$\BF$.
Then there exists a unique L\'evy family $(q_t,a_t,b_t,\pi_t)_{t\ge0}$ with the following property: for every $t \ge0$ and every $\zeta \in \UH,$ the map $s \mapsto v(s)\coloneqq v_{s,t}(\zeta)$ is the unique solution to the initial value problem
\begin{equation}\label{eq:ODE}
\frac{\di }{\di s} v(s)= \phi(v(s),s)  \quad \text{a.e.~}s \in [0,t]; \qquad v(t)=\zeta,
\end{equation}
where $\phi\colon \UH\times [0,\infty)\to\C$ is defined by
\begin{equation}\label{eq:BF_generator}
\phi(\zeta,t) := -q_t + a_t \zeta + b_t \zeta^2 + \!\int\nolimits_0^\infty\!\! \big(e^{-\zeta x} - 1 + \zeta x \ind_{(0,1)}(x)\big) \,\pi_t(\di x),~\,~ \zeta \in \UH, ~\, t\ge0,\hspace{-1em}
\end{equation}
i.e.\ the Herglotz vector field associated with $(v_{s,t})_{(s,t)\in\Delta}$ is of the form \eqref{eq:BF_generator}.

Conversely, given a L\'evy family $(q_t,a_t,b_t,\pi_t)_{t\ge0}$,  the function $\phi$ defined by \eqref{eq:BF_generator} is a Herglotz vector field, and for every $\zeta \in \UH$ and $t\ge0$, the initial value problem~\eqref{eq:ODE} has a unique solution $s\mapsto v(s)=v(s;t,\zeta)$.  Moreover, the formula ${v_{s,t}(\zeta)\coloneqq v(s;t,\zeta)}$ defines an absolutely continuous reverse evolution family $(v_{s,t})_{(s,t)\in\Delta}$  contained in $\BF$.
\end{theorem}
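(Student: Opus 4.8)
The plan is to synthesize the statement from the three imported results together with Proposition~\ref{prop:int}; essentially no new computation is required, and the only real content lies in threading the ``for almost every $t$'' clauses correctly and in producing a \emph{measurable} family of quadruples. Two facts make everything fit together: first, $\BF$ is a topologically closed semigroup of self-maps of $\UH$ (Remark~\ref{EQ_not-selfmap}), so that Theorem~\ref{TH_EvolFam-HVF} applies with $\U=\BF$; and second, by Silverstein's theorem $\Gen(\BF)$ consists \emph{exactly} of the functions of the form \eqref{gen}, each determined by a unique quadruple. The initial value problem \eqref{eq:ODE} is nothing but \eqref{EQ_ini-LK-ODE} once $\phi$ is pinned down to the Silverstein form, and existence and uniqueness of its solution are part of Theorem~\ref{TH_mainBCM1}.

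For the forward implication I would first invoke Theorem~\ref{TH_mainBCM1} to obtain the unique Herglotz vector field $\phi$ associated with $(v_{s,t})$, for which $s\mapsto v_{s,t}(\zeta)$ solves \eqref{EQ_ini-LK-ODE}. Since the family is contained in $\BF$, Theorem~\ref{TH_EvolFam-HVF} yields $\phi(\cdot,t)\in\Gen(\BF)$ for a.e.\ $t$, so by \eqref{gen} there is, for a.e.\ $t$, a unique quadruple $(q_t,a_t,b_t,\pi_t)$ rendering $\phi(\cdot,t)$ of the form \eqref{eq:BF_generator}; on the remaining null set I would set the quadruple identically to zero (which changes nothing up to the identification of L\'evy families). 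Because $\phi$ is a Herglotz vector field, condition \ref{item:HVF} of Proposition~\ref{prop:int} holds, whence by the equivalence \ref{item:HVF}$\Leftrightarrow$\ref{item:int} the integrability conditions hold; combined with the measurability of $t\mapsto(q_t,a_t,b_t,\pi_t)$ — which is precisely what the recovery formulas in the proof of Proposition~\ref{prop:int}, via Lemma~\ref{lem:measurable}, deliver — this shows that $(q_t,a_t,b_t,\pi_t)_{t\ge0}$ is a L\'evy family. Uniqueness is immediate: the Herglotz vector field is unique by Theorem~\ref{TH_mainBCM1}, and the Silverstein quadruple of each $\phi(\cdot,t)$ is unique, so the L\'evy family is determined up to a Lebesgue-null set.

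For the converse I would start from a L\'evy family and define $\phi$ by \eqref{eq:BF_generator}. For each $t$ the quadruple produces a function of the form \eqref{gen}, so $\phi(\cdot,t)\in\Gen(\BF)$ is an infinitesimal generator in $\UH$, which is condition (HVF1); the integrability conditions give (HVF2) and (HVF3) through the implication \ref{item:int}$\Rightarrow$\ref{item:HVF} of Proposition~\ref{prop:int}. Hence $\phi$ is a Herglotz vector field, and Theorem~\ref{TH_mainBCM1} produces the unique absolutely continuous reverse evolution family $(v_{s,t})$ with $v_{s,t}(\zeta)$ the unique solution of \eqref{eq:ODE}. Finally, since $\phi(\cdot,t)\in\Gen(\BF)$ for a.e.\ $t$ and $\BF$ is topologically closed, the implication (ii)$\Rightarrow$(i) of Theorem~\ref{TH_EvolFam-HVF} shows that $(v_{s,t})$ is contained in $\BF$.

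The only genuinely delicate point is the measurable extraction of the quadruple in the forward direction: knowing $\phi(\cdot,t)\in\Gen(\BF)$ for a.e.\ $t$ gives, for each such $t$, a quadruple, but one must also verify that these depend measurably on $t$. This is not automatic, and it is exactly the reason Lemma~\ref{lem:measurable} and the implication \ref{item:HVF2}$\Rightarrow$\ref{item:int} of Proposition~\ref{prop:int} were set up in advance. Granting those, the recovery of $b_t$ and of $x^2\,\pi_t$ from $\phi''(\theta,\cdot)=\LpT{m_t}(\theta)$, where $m_t(\di x)=x^2\,\pi_t(\di x)+2b_t\delta_0$, followed by the recovery of $a_t$ and $q_t$ from $\phi'(\theta_0,\cdot)$ and $\phi(\theta_0,\cdot)$, makes the remainder of the argument routine, and the theorem follows by assembling the pieces.
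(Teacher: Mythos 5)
Your proposal is correct and follows essentially the same route as the paper: the authors prove Theorem~\ref{thm:main_BP} precisely by combining Theorem~\ref{TH_mainBCM1}, Theorem~\ref{TH_EvolFam-HVF} (with $\U=\BF$, which is topologically closed), Silverstein's representation~\eqref{gen} of $\Gen(\BF)$, and Proposition~\ref{prop:int}, whose equivalence \ref{item:HVF}$\Leftrightarrow$\ref{item:int} (resting on Lemma~\ref{lem:measurable}) supplies exactly the measurability and local integrability of $t\mapsto(q_t,a_t,b_t,\pi_t)$ that you correctly identify as the only delicate point. Your handling of the exceptional null set and of uniqueness matches the paper's intended argument, so there is nothing to add.
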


The second statement of the above theorem has recently appeared in the literature (see e.g. \cite[Theorems~6.17 and~6.18]{Li22}) under certain additional conditions on the L\'evy family. Below we show that Theorem~\ref{thm:main_BP} can be obtained as a direct corollary of general facts in Loewner Theory and \cite[Theorem~2]{GHP}. However, there is a non-trivial part of the argument, which is worth being stated separately. Namely, before deducing Theorem~\ref{thm:main_BP}, we are going to prove the following proposition.
Recall that a \textsl{L\'evy measure} is a non-negative Borel measure $\pi$ on~$(0,\infty)$ satisfying the integrability condition~\eqref{EQ_Levy-measure}.

\begin{proposition}\label{prop:int}
For each $t\ge0$, let $a_t \in \R, ~q_t, b_t \in [0,\infty)$ and $\pi_t$ a L\'evy measure. Let $\phi \colon \UH \times [0,\infty) \to \C$
be defined by~\eqref{eq:BF_generator}.
Then the following assertions are equivalent:
\begin{enumerate}[label=\rm(\roman*)]
\item\label{item:HVF}
 $\phi$ is a Herglotz vector field, i.e.\ for each $\zeta\in \UH$ the function $t\mapsto \phi(\zeta,t)$ is measurable, and for each compact subset $K$ of $\UH$ the function
$
t \mapsto \sup_{\zeta\in K} |\phi(\zeta,t)|
$
is in $\Lloc$.

\item\label{item:HVF2} For each $\theta > 0$ the function $t\mapsto \phi(\theta,t)$ is measurable, and in addition,  there exists $\theta_0>0$ such that
$
t \mapsto \phi^{(n)}(\theta_0,t)
$
is in $\Lloc$ for all $n\in \{0,1,2\}$.

\item\label{item:int} $(q_t,a_t,b_t,\pi_t)_{t\ge0}$ is a L\'evy family.
\end{enumerate}
\end{proposition}

In the proof we will make use of the following auxiliary result, which is quite expected and can be established by a standard monotone class argument. For further details, see e.g.~\cite[Theorem~1.23]{Li22} and its proof; see also \cite[Appendix~A.3]{Sharpe88}.

\begin{lemma} \label{lem:measurable}
Let $(m_t)_{t\ge0}$ be a family a non-negative Borel measure on $[0,\infty)$. Suppose that the Laplace transform of $m_t$, i.e.
\[
\LpT{m_t}(\lambda) \coloneqq\int_{[0,\infty)} e^{-\lambda x} \,m_t(\di x),
\]
is finite for every $t\ge0$ and $\lambda>0$. Then the following conditions are equivalent:
\begin{enumerate}[label=\rm(\arabic*)]
\item\label{m1} the function $t\mapsto \LpT{m_t}(\lambda)$ is measurable for each $\lambda>0;$
\item\label{m3} $(m_t)_{t\ge0}$ is a Borel kernel;
\item\label{m4} for any Borel measurable function $f\colon [0,\infty)\to [0,\infty]$,
 the family $(f m_t)_{t\ge0}$ is a Borel kernel, where $(f m_t)(\di x) := f(x) m_t(\di x)$.
\end{enumerate}
\end{lemma}
\begin{proof}[\proofof{Proposition~\ref{prop:int}}] \textsc{Step 1.} The implication \ref{item:int}~$\Rightarrow$~\ref{item:HVF} can be easily proved using the fact that
for any compact set $K\subset\UH$ there exists a constant $C_K>0$ such that
\begin{equation}\label{EQ_estimate-on-K}
\sup_{\zeta\in K}\big|e^{-\zeta x} - 1 + \zeta x \ind_{(0,1)}(x)\big| \le C_K \min\{1,x^2\}
                                                              \qquad \text{for all~}~x\ge0.
\end{equation}

\Step{2: \ref{item:HVF} $\Rightarrow$  \ref{item:HVF2}} This is a consequence of Cauchy's integral formula (for any $\theta_0>0$).

\Step{3: \ref{item:HVF2} $\Rightarrow$  \ref{item:int}} Note first that $\phi^{(n)}(\theta,t)$ are measurable in $t$ for all $\theta>0$ and $n \in \N\cup\{0\}$, as being pointwise limits of measurable functions.
Let $m_t(\di x)\coloneqq  x^2\,\pi_t(\di x) + 2b_t\delta_0$; it is a non-negative Borel measure on $[0,\infty)$ for each $t\ge0$ and its Laplace transform is finite on $(0,\infty)$.
The second $\theta$-derivative of $\phi$ is then given by
\[
 \phi''(\theta,t) = 2 b_t + \int_0^\infty x^2 e^{-\theta x} \,\pi_t(\di x) =   \int_{[0,\infty)} e^{-\theta x} \,m_t(\di x) = \LpT{m_t}(\theta).
\]
By Lemma~\ref{lem:measurable}, $(m_t)_{t\ge0}$ is a Borel kernel. Then $(x^2\,\pi_t(\di x))_{t\ge0}$ is a Borel kernel and $t\mapsto b_t$ is measurable.
We proceed bearing in mind Lemma~\ref{lem:measurable} but without mentioning it each time explicitly. In particular, we conclude that $(\min\{x^2,1\}\,\pi_t(\di x))_{t\ge0}$ is a Borel kernel.
Also, since $t\mapsto \phi''(\theta_0,t)$ is in $\Lloc$, both $t\mapsto b_t$ and $t\mapsto\int_0^\infty x^2 e^{-\theta_0x} \,\pi_t(\di x)$ are in $\Lloc$. The latter implies that
\begin{equation}\label{eq:int0}
t\mapsto\int_{(0,1)} x^2 \pi_t(\di x) \qquad \text{and} \qquad  t\mapsto\int_{[1,\infty)} xe^{-\theta_0x} \,\pi_t(\di x)
\end{equation}
are both in $\Lloc$. This further implies that
\begin{equation}\label{eq:int1}
 \int\limits_0^\infty x (-e^{-\theta_0x}+ \ind_{(0,1)}(x)) \,\pi_t(\di x) = \int\limits_{(0,1)} x (1-e^{-\theta_0x})\,\pi_t(\di x) - \int\limits_{[1,\infty)} xe^{-\theta_0x} \,\pi_t(\di x)
\end{equation}
 and
 \begin{equation}\label{eq:int2}
  \int_{(0,1)} (e^{-\theta_0x} - 1 + \theta_0x) \,\pi_t(\di x)
\end{equation}
are both in $\Lloc$ as functions of $t$.
From the formula
\[
\phi'(\theta_0,t) =  a_t +2 b_t \theta_0  +\int_0^\infty x (-e^{-\theta_0 x}+ \ind_{(0,1)}(x)) \, \pi_t(\di x)
\]
and the local integrability of $t\mapsto \phi'(\theta_0,t)$, of $t\mapsto b_t$ and of~\eqref{eq:int1}, it follows that $t\mapsto a_t$ is in~$\Lloc$.
Finally, from~\eqref{eq:BF_generator} together with the local integrability of $a_t,b_t, \phi(\theta_0,t)$ and of~\eqref{eq:int2}, it follows that the function
\[
t\mapsto q_t + \int_{[1,\infty)} (1-e^{-\theta_0x}) \pi_t(\di x)
\]
is in $\Lloc$, and hence, so are $t\mapsto q_t$ and $t\mapsto \pi_t([1,\infty))$. Recalling the local integrability of the first function of \eqref{eq:int0}, we conclude that
$
t\mapsto \int\nolimits_0^\infty \min\{x^2,1\}\, \pi_t(\di x)
$
is in $\Lloc$, which completes the proof of~\ref{item:int}.
\end{proof}

\begin{proof}[\proofofNP{Theorem~\ref{thm:main_BP}}] becomes immediate if we recall:
 \begin{enumerate}[label=\rm(\arabic*)]
 \item the one-to-one correspondence between absolutely continuous reverse evolution families $(v_{s,t})\subset\Hol(\UH,\UH)$ and Herglotz vector fields ${\phi:\UH\times[0,\infty)\to\C}$, see Theorem~\ref{TH_mainBCM1};
 \item the fact that by \cite[Corollary~3.3]{GHP}, ${(v_{s,t})\subset\BF}$ if and only if the corresponding Herglotz vector field~$\phi(\cdot,t)$ admits  for a.e.~${t\ge0}$ the Silverstein representation~\eqref{eq:BF_generator};
 \item Proposition~\ref{prop:int}, according to which a function represented by~\eqref{eq:BF_generator} is a Herglotz vector field if and only if the parameters in~\eqref{eq:BF_generator}, uniquely defined by~$\phi$ up to a Lebesgue null set on the time axis, form a L\'evy family~$(q_t,a_t,b_t,\pi_t)_{t\ge0}$. \qedhere
 \end{enumerate}
\end{proof}
\begin{remark}
The Bernstein functions $v_{s,t}$ in Theorem~\ref{thm:main_BP} extend continuously to the imaginary axis, see e.g. \cite[Proposition~3.6]{SSV12}. Moreover, $\phi(s,0):=\lim_{\theta\to0^+}{\phi(s,\theta)}$ exists and it is a non-positive real number for a.e.\ ${s\ge0}$, see e.g.\ \cite[Theorem~3]{GHP}. Using the fact that the estimate~\eqref{EQ_estimate-on-K} holds also for any compact~$K$ in the \textit{closed} half-plane ${\{\zeta:\Re\zeta\ge0\}}$, it is then possible to extend ODE~\eqref{eq:ODE} to the boundary point~${\zeta=0}$. Namely, $v(s):=v_{s,t}(0)$, ${s\in[0,t]}$, solves the initial value problem ${\di v/\di t=\phi\big(v(s),s\big)}$, ${v(t)=0}$. Note that in general, the solution is not unique. Uniqueness turns out to be related to the conservativeness (cf. \cite[Theorem~1.2]{FL} and \cite[Theorem~4.1]{LiLi2024}) which  we discuss below from a different perspective.
\end{remark}

\subsection{Branching processes with finite first moments}\label{subsec:first_moment}
Let $(Z_t)_{t\ge0}$ be a branching process whose Laplace exponents~$v_{s,t}$ are characterized as in Theorem~\ref{thm:main_BP}.
We say that $(Z_t)_{t \ge 0}$ is \textsl{conservative} if $\mathbb P^{(s,x)}[Z_t < \infty]=1$ for any $x>0$ and $(s,t)\in\Delta$. This is equivalent to $v_{0,t}(0)=0$ for all $t>0$ as follows from \eqref{eq:derivative_BP}  below.
According to \cite[Section~2]{Gre74} (or see e.g.\ \cite[Theorem~12.3]{Kyp14}), a necessary and sufficient condition for a \emph{time-homogeneous} branching process to be conservative is that
\begin{equation}\label{EQ_conserv-cond}
\int_{0}^\epsilon \frac{1}{|\phi(\theta)|}\,\di \theta =\infty,  \qquad \forall \epsilon >0,
\end{equation}
where $\phi$ is the branching mechanism of the process~$(Z_t)$, i.e.\ the infinitesimal generator of the one-parameter semigroup  $(v_t)$ formed by the Laplace exponents $v_t(\theta)=-\log \mathbb E^{(0,1)}[e^{-\theta Z_t}]$.

In fact, the above condition~\eqref{EQ_conserv-cond} can be easily deduced from a general result on the relationships between the boundary fixed points of a one-parameter semigroup and the boundary behaviour of its Koenigs function, see \cite{CoDiPo04} or \cite[\S13.6]{BCD-Book}. Unfortunately, there does not seem to be known any analogous results for (reverse) evolution families, unless we restrict consideration to boundary \emph{regular} fixed points (\textsl{BRFP} for short; see Appendix~\ref{App-Fixed} or \cite[Sect.\,2.3]{GHP} for the precise definition). As a result, in the time-inhomogeneous setting, the possibility to characterize conservative branching processes by a condition similar to~\eqref{EQ_conserv-cond} is rather unclear.

Instead, we discuss the condition of finite mean, which implies the conservativeness. Finite mean is commonly assumed for time-homogeneous branching processes~\cite{Li22} and can be well characterized. Even in the time-inhomogeneous case, we are able to completely characterize finite mean  in terms of Herglotz vector fields.
With $v_{s,t}(0)$, $v'_{s,t}(0)$, $\phi(0,s)$, $\phi'(0,s)$ understood as limits for~${\theta\to0^+}$, which exist by monotonicity,  our result can be stated as follows.

\begin{theorem}\label{thm:first_moment}
Let $(Z_t)_{t\ge0}$ be a branching process whose Laplace exponents~$(v_{s,t})$ are characterized by a Herglotz vector field $\phi$ as in Theorem~\ref{thm:main_BP}.
Then the following six conditions are equivalent:
\begin{enumerate}[label=\rm(\arabic*)]

\item\label{first_moment0} $\mathbb E^{(s,x)}[Z_t]<\infty$ for all $x \in (0,\infty)$ and ${(s,t)\in \Delta}$.

\item\label{first_moment1} $\mathbb E^{(0,1)}[Z_t]<\infty$ for all $t \ge 0$.

\item \label{first_moment1.5} $v_{0,t}(0)=0$ and $v'_{0,t}(0)<\infty$, i.e.\ $0$ is a BRFP of $v_{0,t}$ for all $t\ge 0$.

\item \label{first_moment2} $v_{s,t}(0)=0$ and $v'_{s,t}(0)<\infty$, i.e.\,$0$ is a BRFP of~${v_{s,t}}$~for~all~${(s,t)\in\Delta}$.

\item\label{first_moment3} $\phi(0,t)=0$ for a.e.\ $t\ge0$ and the function $t\mapsto \phi'(0,t)$ is in $\Lloc$.

\item \label{first_moment4} $q_t=0$ for a.e.\ $t\ge0$ and the function $t\mapsto \int_{[1,\infty)} x\,\pi_t(\di x)$ is in $\Lloc$.

\end{enumerate}
If one (and hence all) of the above conditions hold, then
\begin{equation}\label{EQ_math-exp}
\mathbb E^{(s,x)}[Z_t] = x v_{s,t}'(0)= x\exp \left(- \int_s^t \phi'(0,r) \,\di r \right)
\end{equation}
 for all $x \in (0,\infty)$ and ${(s,t) \in \Delta}$.
\end{theorem}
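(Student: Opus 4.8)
The plan is to funnel all six conditions through the single geometric fact that the boundary point $0$ is a boundary regular fixed point (BRFP) of the Laplace exponents, and to read off the expectation formula from a linear ODE for the boundary derivative. First I would carry out the probabilistic reduction. Because $Z_t$ takes values in $[0,\infty]$, formula \eqref{eq:laplace} and monotone convergence give $\mathbb P^{(s,x)}[Z_t=\infty]=1-\lim_{\theta\to0^+}e^{-xv_{s,t}(\theta)}=1-e^{-xv_{s,t}(0)}$, so $v_{s,t}(0)>0$ forces $\mathbb E^{(s,x)}[Z_t]=\infty$. When $v_{s,t}(0)=0$ the endpoint is reached finitely, and differentiating \eqref{eq:laplace} in $\theta$ yields $\mathbb E^{(s,x)}[Z_t e^{-\theta Z_t}]=x v_{s,t}'(\theta)e^{-xv_{s,t}(\theta)}$; letting $\theta\to0^+$ (monotone convergence on the left) gives $\mathbb E^{(s,x)}[Z_t]=x v_{s,t}'(0)$, finite iff $v_{s,t}'(0)<\infty$. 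Thus for each fixed $x>0$, $\mathbb E^{(s,x)}[Z_t]<\infty$ if and only if $v_{s,t}(0)=0$ and $v_{s,t}'(0)<\infty$, i.e.\ $0$ is a BRFP of $v_{s,t}$. This at once yields \ref{first_moment0}$\Leftrightarrow$\ref{first_moment2} (ranging over all $x$ and $(s,t)$), \ref{first_moment1}$\Leftrightarrow$\ref{first_moment1.5} (the single pair $(0,1)$), and the first equality in \eqref{EQ_math-exp}.

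Next I would establish \ref{first_moment1.5}$\Leftrightarrow$\ref{first_moment2}, i.e.\ that the BRFP property for every $v_{0,t}$ propagates to every $v_{s,t}$; the direction \ref{first_moment2}$\Rightarrow$\ref{first_moment1.5} is the case $s=0$. For the converse I would use \ref{REF2} in the form $v_{0,t}=v_{0,s}\circ v_{s,t}$ with $0\le s\le t$. A nonzero Bernstein function vanishing at $0$ is strictly positive on $(0,\infty)$, so $v_{0,s}\big(v_{s,t}(0)\big)=v_{0,t}(0)=0$ together with $v_{0,s}(0)=0$ forces $v_{s,t}(0)=0$. Differentiating the composition and letting $\theta\to0^+$ gives the multiplicativity $v_{0,t}'(0)=v_{0,s}'(0)\,v_{s,t}'(0)$; since the boundary derivative of a nonzero Bernstein function vanishing at $0$ is strictly positive and, by \ref{first_moment1.5}, finite, we get $v_{s,t}'(0)=v_{0,t}'(0)/v_{0,s}'(0)<\infty$, so $0$ is a BRFP of $v_{s,t}$.

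The core is \ref{first_moment2}$\Leftrightarrow$\ref{first_moment3}, which connects the fixed-point data to the Herglotz vector field via the backward ODE \eqref{eq:ODE}. If $\phi(0,s)=0$ a.e., then $g\equiv0$ solves $g'(s)=\phi(g(s),s)$ with $g(t)=0$, so by uniqueness $v_{s,t}(0)=0$; conversely $v_{s,t}(0)\equiv0$ inserted into the boundary limit of \eqref{eq:ODE} gives $\phi(0,s)=0$ a.e. Granted $v_{s,t}(0)=0$, differentiating \eqref{eq:ODE} in $\theta$ and sending $\theta\to0^+$ produces, for $h(s):=v_{s,t}'(0)$, the linear Carath\'eodory ODE $h'(s)=\phi'(0,s)h(s)$ with $h(t)=1$, hence $h(s)=\exp\!\big(-\int_s^t\phi'(0,r)\,\di r\big)$. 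Since $\phi'(0,s)\le a_s$ with $a_\cdot\in\Lloc$, this is finite for all $(s,t)$ exactly when $\phi'(0,\cdot)\in\Lloc$, giving \ref{first_moment2}$\Leftrightarrow$\ref{first_moment3} and the second equality in \eqref{EQ_math-exp}. Finally \ref{first_moment3}$\Leftrightarrow$\ref{first_moment4} is a direct computation from \eqref{eq:BF_generator}: taking $\theta\to0^+$ gives $\phi(0,s)=-q_s$ and $\phi'(0,s)=a_s-\int_{[1,\infty)}x\,\pi_s(\di x)$, and as $a_\cdot\in\Lloc$ by the L\'evy-family hypothesis, the two parts of \ref{first_moment3} become $q_s=0$ a.e.\ and $\int_{[1,\infty)}x\,\pi_\cdot(\di x)\in\Lloc$, which is \ref{first_moment4}.

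I expect the genuine obstacle to sit in the boundary passage of the ODE step: one must justify interchanging $\lim_{\theta\to0^+}$ with $\di/\di s$ and with differentiation in $\theta$, and in particular cope with the case $v_{s,t}'(0)=\infty$, where the linear-ODE manipulation formally collapses and uniqueness of the boundary problem is delicate (the map $\phi(\cdot,s)$ need not be Lipschitz up to $0$ precisely when $\phi'(0,s)=-\infty$). The clean way to handle this is to run the differentiated equation at interior points $\theta>0$, where \ref{HVF3} and holomorphy supply all the needed estimates and Lipschitz bounds, and only afterwards take monotone limits $\theta\to0^+$, using the already-proved identity $v_{s,t}(0)=0$ to pass $\phi'(v_{s,t}(\theta),s)\to\phi'(0,s)$. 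Alternatively, and perhaps more economically, one can appeal to the boundary-regular-fixed-point theory for absolutely continuous reverse evolution families developed in \cite{GHP}, which already packages exactly this linear ODE for the angular derivative at a boundary point and thereby reduces \ref{first_moment2}$\Leftrightarrow$\ref{first_moment3} to a citation.
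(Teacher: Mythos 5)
Most of your proposal coincides with the paper's proof: the pointwise identities $\mathbb P^{(s,x)}[Z_t<\infty]=e^{-xv_{s,t}(0)}$ and $\mathbb E^{(s,x)}\big[Z_t\ind_{\{Z_t<\infty\}}\big]=xv_{s,t}'(0)e^{-xv_{s,t}(0)}$ (the paper's \eqref{eq:derivative_BP}), the composition argument via $v_{0,t}=v_{0,s}\circ v_{s,t}$ for \ref{first_moment1.5}$\Leftrightarrow$\ref{first_moment2}, and the computation $\phi(0,t)=-q_t$, $\phi'(0,t)=a_t-\int_{[1,\infty)}x\,\pi_t(\di x)$ for \ref{first_moment3}$\Leftrightarrow$\ref{first_moment4} are exactly the paper's steps. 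For the core equivalence \ref{first_moment2}$\Leftrightarrow$\ref{first_moment3}, your ``fallback'' (b) is also precisely the paper's route: it cites \cite[Theorem~1.1]{BRFP2015} (together with \cite[Remark~2.8]{GHP} to pass between evolution and reverse evolution families), and that citation also delivers the spectral formula $v_{s,t}'(0)=\exp\big(-\int_s^t\phi'(0,r)\,\di r\big)$, i.e.\ the second equality in \eqref{EQ_math-exp}.

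Your primary, self-contained argument for \ref{first_moment3}$\Rightarrow$\ref{first_moment2}, however, contains a genuine gap which your own parenthetical warning does not repair. You claim that $\phi(0,s)=0$ a.e.\ \emph{alone} forces $v_{s,t}(0)=0$ ``by uniqueness'', because $g\equiv0$ solves the terminal value problem. Uniqueness for \eqref{eq:ODE} holds only at interior points of $\UH$, where $\phi(\cdot,s)$ is holomorphic; at the boundary point $0$ it can fail, and it does. Consider the time-independent Herglotz vector field $\phi(\zeta,s):=-c\zeta^{\alpha}$ with $c>0$, $\alpha\in(0,1)$, which is of the form \eqref{eq:BF_generator} with L\'evy family $q_s=0$, $b_s=0$, $\pi_s(\di x)=Cx^{-1-\alpha}\,\di x$ and a suitable constant $a_s<0$. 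Here $\phi(0,s)=0$ for every $s$, yet the solution of \eqref{eq:ODE} is $v_{s,t}(\theta)=\big(\theta^{1-\alpha}+c(1-\alpha)(t-s)\big)^{1/(1-\alpha)}$, so that $v_{s,t}(0)=\big(c(1-\alpha)(t-s)\big)^{1/(1-\alpha)}>0$: both $g\equiv0$ and $s\mapsto v_{s,t}(0)$ satisfy the equation with terminal value $0$ — the classical non-uniqueness of $y'=cy^{\alpha}$ (probabilistically, this is the explosive stable CSBP, which is not even conservative). This does not contradict the theorem, since here $\phi'(0,s)=-\infty\notin\Lloc$; but it shows your intermediate lemma is false, and hence that the implication must use the two halves of \ref{first_moment3} \emph{together}. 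A correct self-contained route would be: once $\phi'(0,\cdot)\in\Lloc$, the convexity of $\phi(\cdot,s)$ on $(0,\infty)$ gives the linear bound $|\phi(\theta,s)|\le L(s)\,\theta$ for $\theta\in(0,1]$ with $L:=\max\{|\phi'(0,\cdot)|,|\phi'(1,\cdot)|\}\in\Lloc$ (the second term by Cauchy estimates and \ref{HVF3}), and a Gronwall argument applied to the \emph{interior} solutions yields $v_{s,t}(\theta)\le\theta\exp\big(\int_s^tL(r)\,\di r\big)$ for small $\theta$, whence $v_{s,t}(0)=0$ in the limit. Your remedy (a) does not accomplish this: it explicitly presupposes ``the already-proved identity $v_{s,t}(0)=0$'', i.e.\ it is circular. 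As written, only the citation route (b) — which is what the paper does — yields a complete proof.
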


Note that Herglotz vector fields associated with reverse evolution families $(v_{s,t})\subset\BF$ satisfying condition~\ref{first_moment2} above can be characterized by a representation formula, see~\eqref{EQ_inf-gen-represent-DWzero} and Remark~\ref{RM-charac-BRFP-at-zero} in Section~\ref{subsub:0}.

\begin{proof}[\proofof{Theorem~\ref{thm:first_moment}}] We begin with establishing the equivalence of \ref{first_moment0}--\ref{first_moment2}.
Passing to the limit as $\theta \to0^+$ in the relation $\mathbb E^{(s,x)}[e^{-\theta Z_t} \ind_{\{Z_t <\infty\}}] = e^{-x v_{s,t}(\theta)}, \theta >0$, and its differentiated form, with the help of the monotone convergence theorem we get
\begin{equation}\label{eq:derivative_BP}
\mathbb P^{(s,x)}[Z_t<\infty] = e^{-x v_{s,t}(0)} \qquad \text{and}\qquad \mathbb E^{(s,x)}[Z_t \ind_{\{Z_t <\infty\}}] = xv_{s,t}'(0)e^{-x v_{s,t}(0)}.
\end{equation}
It is then straightforward to see that \ref{first_moment1} is equivalent to~\ref{first_moment1.5} and that   \ref{first_moment0} is equivalent to~\ref{first_moment2}.

Obviously,~\ref{first_moment2}~$\Rightarrow$~\ref{first_moment1.5}. To prove the converse implication, suppose that~\ref{first_moment1.5} holds. If $v_{s,t}(0)>0$ for some $(s,t)\in\Delta$, then passing to the limit as ${\theta \to0^+}$ in the identity $v_{0,t}(\theta)={v_{0,s}\big(v_{s,t}(\theta)\big)}$ would yield $v_{0,t}(0)=v_{0,s}\big(v_{s,t}(0)\big)\neq0$ because $v_{0,s}$ is a self-map of~$\UH$. Hence $v_{s,t}(0)=0$ for all ${(s,t)\in\Delta}$. Note that ${v_{0,s}'(0)>0}$ for any $s\ge0$ because\footnote{Alternatively, the inequality ${v_{0,s}'(0)>0}$ follows from the fact that the function $v_{0,s}$ is positive and concave on ${(0,\infty)}$ and vanishes at~$0$.} the angular derivative of a holomorphic self-map at a boundary fixed point ${\sigma\neq\infty}$ cannot vanish, see e.g.\  \cite[Proposition~1.9.3 on p.\,54]{BCD-Book}.  Taking the derivative in both sides and then passing to the limit we may, therefore, conclude that $v_{s,t}'(0) = {v_{0,t}'(0)}/{v_{0,s}'(0)}<\infty$.

It remains to show that \ref{first_moment2}~$\Leftrightarrow$~\ref{first_moment3}~$\Leftrightarrow$~\ref{first_moment4}. To begin with, taking into account the relationship between evolution families and \emph{reverse} evolution families, see e.g.\  \cite[Remark~2.8]{GHP},  the equivalence \ref{first_moment2}~$\Leftrightarrow$~\ref{first_moment3} follows from \cite[Theorem 1.1]{BRFP2015}.
Furthermore,
\begin{equation*}
\phi'(\theta,t) = a_t + 2 b_t \theta + \int_{(0,1)} x(1-e^{-\theta x})\,\pi_t(\di x) -  \int_{[1,\infty)} xe^{-\theta x}\,\pi_t(\di x).
\end{equation*}
Hence, passing to the limit $\theta\to0^+$ yields that
\begin{equation*}
\phi'(0,t) = a_t - \int_{[1,\infty)} x \,\pi_t(\di x).
\end{equation*}
Because $t\mapsto a_t$ is  locally integrable by the definition of a L\'evy family and because ${q_t=-\phi(0,t)}$, the equivalence between \ref{first_moment3} and \ref{first_moment4} now follows.

Finally, the formula for the spectral function in \cite[Theorem 1.1]{BRFP2015} implies that
\begin{equation*}
v_{s,t}'(0) = \exp\left[ -\int_s^t \phi'(0,r)\,\di r\right].\qedhere
\end{equation*}
\end{proof}

Let us now explore the second moments of~$(Z_t)$.
Consider  a Herglotz vector field~$\phi$ whose reverse evolution family $(v_{s,t})$ is contained in~$\BF$. Then by \cite[Corollary~3.3]{GHP}, $\phi(\cdot,t)\in\Gen(\BF)$ for a.e.\,${t\ge0}$. Hence, see e.g. \cite[Theorem~3]{GHP}, $\phi''(\cdot,t)$ is completely monotone on~$(0,\infty)$, from which we get
$
\phi'(1,t)-\phi''(0,t)\le\phi'(0,t)\le\phi'(1,t)
$
 for a.e. ${t\ge0}$.
It is evident from the proof of Proposition~\ref{prop:int} that $t\mapsto \phi'(1,t)$ is in~$\Lloc$. Therefore, if ${t\mapsto \phi''(0,t)}$ is in~$\Lloc$, so is ${t\mapsto \phi'(0,t)}$. With this observation and \cite[Proposition~3.17]{GHP} taken into account, a reasoning similar to that in the proof of Theorem~\ref{thm:first_moment}  yields  the following characterization of finite second moments.

\begin{theorem}\label{thm:second_moment}
Under the hypothesis of Theorem~\ref{thm:first_moment},
the following six conditions are equivalent:
\begin{enumerate}[label=\rm(\arabic*${}'$)]

\item\label{second_moment0} $\mathbb E^{(s,x)}[Z_t^2]<\infty$ for all $x \in (0,\infty)$ and $(s,t)\in \Delta$.

\item\label{second_moment1} $\mathbb E^{(0,1)}[Z_t^2]<\infty$ for all $t \ge 0$.

\item \label{second_moment1.5} $v_{0,t}(0)=0$ and $v''_{0,t}(0)>- \infty$ for all $t\ge 0$.

\item \label{second_moment2} $v_{s,t}(0)=0$ and $v''_{s,t}(0)>-\infty$ for all $(s,t)\in\Delta$.

\item\label{second_moment3} $\phi(0,t)=0$ for a.e.\ $t\ge0$ and the function $t\mapsto \phi''(0,t)$ is in $\Lloc$.

\item \label{second_moment4} $q_t=0$ for a.e.\ $t\ge0$ and the function $t\mapsto \int_{[1,\infty)} x^2\,\pi_t(\di x)$ is in $\Lloc$.

\end{enumerate}
If one (and hence all) of the above conditions hold, then $v_{s,t}'(0)$ is finite  and
\begin{equation}\label{EQ_second-moment}
\mathbb E^{(s,x)}[Z_t^2] = \big(x v_{s,t}'(0)\big)^2 - x v_{s,t}''(0) = x v_{s,t}'(0) \left[ x v_{s,t}'(0) +  \int_s^t \phi''(0,r)v_{r,t}'(0) \,\di r \right].
\end{equation}
 for all $x \in (0,\infty)$ and $(s,t) \in \Delta$.
\end{theorem}

\subsection{Processes with absolutely continuous mean and variance}
According to Theorem~\ref{thm:REF_BP}, the Laplace exponents of a branching process~$(Z_t)$ form a topological reverse evolution family~$(v_{s,t})$ contained in $\BF$, and vice versa. In order to get the ODE in Theorem~\ref{thm:main_BP}, one also needs the absolute continuity, i.e.\ property~\ref{REF3}.

Various conditions assuring that a given topological (reverse) evolution family is actually absolutely continuous are known, see e.g.\  \cite[Section~3.4]{GHP}. Applying one of them to the Laplace exponents of~$(Z_t)$ allows us to  establish a \textit{one-to-one correspondence} between branching processes with sufficiently regular behaviour of the first two moments and a class of L\'evy families. Namely, we prove the following result.

\begin{theorem}\label{TH_1-to-1-ACloc}
Let $(Z_t)_{t\ge0}$ be a branching process and  $(v_{s,t})\subset\BF$ the topological reverse evolution family formed by the Laplace exponents of~$(Z_t)$.
The following conditions are equivalent:
\begin{enumerate}
\item[(i)] for any ${s\ge0}$ and any ${x\ge0}$, the two first moments ${\mathbb E^{(s,x)}[Z_t^k]}$, ${k=1,2}$, are (finitely-valued) locally absolutely continuous functions of~$t$ on~${[s,\infty)}$;

\item[(ii)] the reverse evolution family $(v_{s,t})$ is absolutely continuous and the Herglotz vector field~$\phi$ associated with~$(v_{s,t})$ satisfies the equivalent conditions \ref{second_moment3}\,--\,\ref{second_moment4} in Theorem~\ref{thm:second_moment}.
\end{enumerate}
\end{theorem}
\begin{proof}
The implication~(ii)\,$\Rightarrow$\,(i) holds in view of Theorems~\ref{thm:first_moment} and~\ref{thm:second_moment}. To prove the converse implication, suppose that~$(Z_t)$ satisfies condition~(i). Then  ${\mathbb E^{(0,1)}[Z_t]<\infty}$ and as a consequence,  ${v_{0,t}(0)=0}$ for all~$t\ge0$. Moreover,
\[
 \mathbb E^{(0,1)}[Z_t] = v_{0,t}'(0) \quad \text{and} \quad \mathbb E^{(0,1)}[Z_t^2] = [v_{0,t}'(0)]^2 - v_{0,t}''(0).
\]
Therefore, by~\cite[Corollary~3.10]{GHP}, the Laplace exponents of $(Z_t)$ form an absolutely continuous reverse evolution family~$(v_{s,t})$ contained in~$\BF$. According to Theorem~\ref{thm:main_BP}, the Herglotz vector field~$\phi$ associated with~$(v_{s,t})$ admits the Silverstein-type representation~\eqref{eq:BF_generator}, and it only remains to apply Theorem~\ref{thm:second_moment} to see that $\phi$ satisfies conditions \ref{second_moment3}\,--\,\ref{second_moment4}.
\end{proof}

\subsection{Extinction, explosion, and the Denjoy\,--\,Wolff point}
A fundamental role for understanding dynamics of a holomorphic self-map is played by the so-called \textsl{Denjoy\,--\,Wolff point} (\hbox{often} abbreviated as \textsl{DW-point}). This point is the common limit of the orbits under iteration of the self-map; see, e.g.,  Appendix~\ref{App-Fixed} or~\cite[Sect.\,2.3]{GHP} for the precise definition and  basic details. Below we explore how the position of the DW-point of the Laplace exponents is related to the probabilistic properties of a branching process.

Denote by $\BF_\tau$ the class of Bernstein functions consisting of~$\id_\UH$ and all $v\in\BF\setminus\{\id_\UH\}$ having the DW-point at~$\tau$.
The Laplace exponents $v_t$ of a time-homogeneous branching process form a one-parameter semigroup and hence $(v_t)_{t\ge0}\subset\BF_\tau$ for some~${\tau\in[0,\infty]}$, see Remarks~\ref{RM_the-same-FPs}  and~\ref{RM_DW-Bernstein}.

In the time-inhomogeneous case, the Laplace exponents form a reverse evolution family $(v_{s,t})$ and, in general, the position of the DW-point of~$v_{s,t}$ may depend on the time parameters $s$ and~$t$. However, it seems to be interesting to investigate (and give a probabilistic interpretation of) the special case~$(v_{s,t})\subset\BF_\tau$. First we will consider the case of the Denjoy\,--\,Wolff point on the boundary, which means ${\tau\in\{0,\infty\}}$. The case of the Denjoy\,--\,Wolff point ${\tau\in(0,\infty)}$ is covered in Subsect.\,\ref{subsub:infty}. It is worth mentioning that in certain questions, extending consideration to boundary regular fixed points\footnote{In what follows abbreviated as \textsl{BRFP}; see Appendix~\ref{App-Fixed} or \cite[Sect.\,2.3]{GHP}  for the definition.} at~$0$ or~$\infty$ is rather natural or, at least, does not make the picture more complicated. That is why several results in this section are established in this more general case.

\addtocontents{toc}{\SkipTocEntry}\subsubsection{The DW-point at the origin}\label{subsub:0}
A Bernstein function ${v\neq\id_\UH}$ has the DW-point at~0 if and only if $v(0)=0$ and $v'(0)\le 1$. Therefore, the Laplace exponents of a branching process $(Z_t)$ are contained in~$\BF_0$ if and only if the process is conservative and ``subcritical'', i.e.\ ${\mathbb E^{(s,x)}[Z_t]\le x}$ for all ${x \in (0,\infty)}$ and $(s,t)\in \Delta$; see the proof of Theorem~\ref{thm:first_moment}.

Assuming that the Laplace exponents of a branching process $(Z_t)$ belong to~$\BF_0$ and that its sample paths are of class ${D[0,\infty)}$, see Theorem~\ref{thm:cadlag}, we are able to give a sufficient condition for almost sure extinction within a finite time.

Recall that by definition, the \textsl{extinction time} is
\begin{equation}\label{EQ_ext-time}
{T_0^s \coloneqq \inf\{t \ge s: Z_t =0\}}.
\end{equation}
Recall also that there is a natural one-to-one correspondence between absolutely continuous reverse evolution families and Herglotz vector fields; see Theorem~\ref{TH_mainBCM1} and Theorem~\ref{thm:main_BP}.
\begin{theorem}\label{TH_infty-to-zero}
Let $(v_{s,t})$ be an absolutely continuous reverse evolution family in~$\BF_0$ with associated Herglotz vector field~$\phi$. Suppose that
\begin{equation}\label{EQ_int-phi-two-primes}
\int_0^{+\infty}\phi''(\infty,t)\,\di t=+\infty.
\end{equation}
Then $\lim\nolimits_{t\to\infty} v_{s,t}(\infty)=0$  for any ${s\ge0}$, and hence for the branching process~$(Z_t)$ with $D[0,\infty)$-paths associated to~$(v_{s,t})$, we have
$$
 \qquad\qquad\mathbb P^{(s,x)}[T_0^s<\infty]=1\quad\text{~for any $~x>0~$ and any~$~s\ge0$.}
$$
\end{theorem}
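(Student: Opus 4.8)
The plan is to reduce the whole statement to the behaviour of the single scalar quantity $v_{s,t}(\infty)$ and to extract a differential inequality for its reciprocal from the Loewner\,--\,Kufarev ODE \eqref{eq:ODE} of Theorem~\ref{thm:main_BP}. First I would unwind the hypothesis. Differentiating \eqref{eq:BF_generator-DWzero} twice in the spatial variable gives $\phi''(\theta,t)=2b_t+\int_0^\infty x^2e^{-\theta x}\,\pi_t(\di x)$, and letting $\theta\to+\infty$ the integral vanishes by monotone convergence (recall $\int_{(0,1)}x^2\,\pi_t<\infty$), so $\phi''(\infty,t)=2b_t$. Hence \eqref{EQ_int-phi-two-primes} reads precisely $\int_0^\infty b_t\,\di t=+\infty$. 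I would also record the elementary pointwise bound $\phi(\zeta,t)\ge b_t\zeta^2$ for real $\zeta>0$, valid because $c_t\zeta\ge0$ and the integrand $e^{-\zeta x}-1+\zeta x$ is non-negative.

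The heart of the argument is the estimate
\[
\frac{1}{v_{t,u}(\infty)}\ge\int_t^u b_r\,\di r,\qquad s\le t\le u,
\]
with the convention $1/\infty=0$. To prove it I would fix $u$ and a \emph{finite} $\theta>0$ and set $y_\theta(r):=v_{r,u}(\theta)$; by Theorem~\ref{thm:main_BP} this is an absolutely continuous, strictly positive solution of $y_\theta'(r)=\phi(y_\theta(r),r)$ on $[0,u]$ with $y_\theta(u)=\theta$ (positivity holds because every $v_{r,u}\in\BF_0$ satisfies $v_{r,u}(\theta)>0$ for $\theta>0$). Then $r\mapsto 1/y_\theta(r)$ is absolutely continuous on $[t,u]$ and, using $\phi(y,r)\ge b_r y^2$,
\[
\frac{\di}{\di r}\,\frac1{y_\theta(r)}=-\frac{\phi(y_\theta(r),r)}{y_\theta(r)^2}\le-b_r\quad\text{a.e.}
\]
Integrating from $t$ to $u$ and using $y_\theta(u)=\theta$ gives $1/v_{t,u}(\theta)\ge 1/\theta+\int_t^u b_r\,\di r$; letting $\theta\to+\infty$, where $v_{t,u}(\theta)\to v_{t,u}(\infty)$ monotonically, yields the displayed estimate. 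In particular, whenever $\int_t^u b_r\,\di r>0$ the value $v_{t,u}(\infty)$ is finite.

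Finally I would conclude. Fix $s$ and choose any $t_0>s$. For $u>t_0$ the reverse evolution family property~\ref{REF2} gives $v_{s,u}(\infty)=v_{s,t_0}\!\big(v_{t_0,u}(\infty)\big)$. Since $\int_0^\infty b_r\,\di r=+\infty$, the estimate shows $v_{t_0,u}(\infty)\le\big(\int_{t_0}^u b_r\,\di r\big)^{-1}\to0$ as $u\to\infty$; because $v_{s,t_0}$ is a Bernstein function in $\BF_0$ it is continuous with $v_{s,t_0}(0)=0$, so $v_{s,u}(\infty)\to v_{s,t_0}(0)=0$. This is the asserted limit $\lim_{t\to\infty}v_{s,t}(\infty)=0$. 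Substituting it into \eqref{EQ_finite-ext-time-probab} of Remark~\ref{RM_extinction-time-via-LE} gives $\mathbb P^{(s,x)}[T_0^s<\infty]=\lim_{t\to\infty}e^{-x v_{s,t}(\infty)}=1$ for every $x>0$. The only delicate point is the passage $\theta\to+\infty$ in the differential inequality: I would work throughout with finite $\theta$, where positivity of $y_\theta$ and absolute continuity of $1/y_\theta$ are unproblematic, and take the monotone limit only at the very end; the rest of the proof is then soft, relying merely on the composition rule and the continuity of $v_{s,t_0}$ at the boundary fixed point $0$.
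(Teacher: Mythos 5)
Your proof is correct, and it rests on the same key inequality as the paper's proof---namely $\phi(\theta,t)\ge b_t\theta^2$ for $\theta>0$ and a.e.\ $t$, where $2b_t=\phi''(\infty,t)$, so that \eqref{EQ_int-phi-two-primes} becomes $\int_0^{\infty}b_t\,\di t=\infty$---but it converts that inequality into decay of $v_{s,t}(\infty)$ by a different mechanism. The paper introduces the auxiliary Herglotz vector field $\phi_0(\zeta,t):=b_t\zeta^2$, whose reverse evolution family is explicit, $v^0_{s,t}(z)=z/(1+b(s,t)z)$ with $b(s,t)=\tfrac12\int_s^t\phi''(\infty,r)\,\di r$, and proves $v_{s,t}\le v^0_{s,t}$ on $(0,\infty)$ by the Gronwall-type comparison result Lemma~\ref{lem:comparison} applied after a time reversal. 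You bypass both the auxiliary family and the comparison lemma with the Riccati substitution $w=1/y_\theta$, which turns the one-sided bound $y_\theta'=\phi(y_\theta,r)\ge b_r y_\theta^2$ into $w'\le -b_r$ and integrates at once; your estimate $1/v_{t,u}(\theta)\ge 1/\theta+\int_t^u b_r\,\di r$ is exactly the paper's inequality $v_{t,u}(\theta)\le v^0_{t,u}(\theta)$ in disguise, obtained by direct integration rather than by an abstract comparison principle. Your route is more elementary and self-contained: the only analytic points are strict positivity of $y_\theta$ on compact intervals (so that $1/y_\theta$ is absolutely continuous) and the monotone passage $\theta\to\infty$, both of which you address; the paper's route is the more general-purpose tool, usable whenever the majorizing field has a solvable flow even if no linearizing substitution is available. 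Two minor remarks: the detour through $t_0$ and the composition rule \ref{REF2} at the end is unnecessary, since taking $t=s$ in your estimate already gives $v_{s,u}(\infty)\le\bigl(\int_s^u b_r\,\di r\bigr)^{-1}\to 0$ directly (here one uses that $t\mapsto b_t$ is in $\Lloc$, condition (a) of Theorem~\ref{thm:BP-for-DWzero}, to see that $\int_s^u b_r\,\di r\to\infty$); and since the representation \eqref{eq:BF_generator-DWzero} determines $(c_t,b_t,\pi_t)$ only up to a null set of times, your pointwise bound $\phi(\theta,t)\ge b_t\theta^2$ should be read as an a.e.-in-$t$ statement, which is all that the integration requires.
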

Before proving this theorem, we will first characterize Herglotz vector fields of reverse evolution families in~$\BF_0$ with the help of an integral representation for $\Gen(\BF_0)$, which can be found e.g.\ in \cite[Corollary~3.6]{GHP} and \cite[Theorem~1 on p.\,21]{LeGall}{\hspace{.05em}}\footnote{Worth recalling that from the probabilistic point of view, elements of $\Gen(\BF_0)$ are conservative branching mechanisms that are subcritical in the sense explained above.}. Namely, a function ${\phi:\UH\to\Complex}$ belongs to~$\Gen(\BF_0)$ if and only if there exist $c,\,b\ge0$ and a non-negative Borel measure~$\pi$ on~$(0,\infty)$ with $\int_{(0,\infty)}\min\{\lambda^2,\lambda\}\,\pi(\di\lambda)<\infty$ such that
\begin{equation}\label{EQ_inf-gen-represent-DWzero}
\phi(\zeta)= c\zeta + b\zeta^2 + \int_{(0,\infty)} \!\!\big(e^{-\zeta x} - 1 + \zeta x\big) \,\pi(\di x) \quad  \text{for all~}~\zeta\in \UH.
\end{equation}
\vspace{-3ex}
\begin{remark}\label{RM_comparing two-representations-for-DWzero}
Clearly, any $\phi\in\Gen(\BF_0)$ admits also Silverstein's representation~\eqref{gen}. As we mentioned in \cite[right after Corollary~3.6]{GHP}, the coefficient $b$ and the measure~$\pi$ in the representations~\eqref{gen} and~\eqref{EQ_inf-gen-represent-DWzero} are the same. Moreover, it is easy to show that
\begin{align*}
-q&=\phi(0):=\lim_{\theta\to0^+}\phi(\theta)=0, & 2b&=\phi''(\infty):=\lim_{\theta\to+\infty}\phi''(\theta),\\
c&=\phi'(0):=\lim_{\theta\to0^+}\phi'(\theta), & a&=c+\textstyle\int_{[1,\infty)}\lambda\,\pi(\di\lambda).
\end{align*}
\end{remark}
\begin{theorem}\label{thm:BP-for-DWzero}
A function $\phi:\UH\times[0,+\infty)\to\Complex$ is a Herglotz vector field of a reverse evolution family contained in~$\BF_0$ if and only if for a.e.~$t\ge0$ it admits the representation
\begin{equation}\label{eq:BF_generator-DWzero}
\phi(\zeta,t) =c_t \zeta + b_t \zeta^2 + \int_0^\infty (e^{-\zeta x} - 1 + \zeta x) \,\pi_t(\di x), \qquad \zeta \in \UH,
\end{equation}
with some $c_t,\,b_t\ge0$, and some non-negative Borel measures $\pi_t$ on~$(0,\infty)$ satisfying the following two conditions:
\begin{itemize}
 \item[(a)] $t\mapsto c_t$ and $t\mapsto b_t$ are in $\Lloc$;
 \item[(b)] the family of measures $\min\{\lambda^2,\lambda\}\,\pi_t(\di\lambda)$ on $(0,\infty)$ is locally integrable\footnote{Recall  that a family of measures $(\rho_t)_{t\ge0}$ on~$[0,\infty)$ is called locally integrable if $t\mapsto\rho_t(B)$ is $L^1_{\mathrm{loc}}$ for any Borel set ${B\subset[0,\infty)}$.} in~$t$.
\end{itemize}
The family $(c_t,b_t,\pi_t)$ is determined uniquely up to a Lebesgue null-set on the $t$-axis.
\end{theorem}

\begin{proof}
Recall that $\BF$ is topologically closed in~$\Hol(\UH,\UH)$. Moreover, it is well-known, see e.g.\  \cite[Remark~2.4]{GumProkh2018}, that if $v\in\Hol(\UH,\UH)\setminus\{\id_\UH\}$ is the limit of a sequence of self-maps with the DW-point at~$0$, then the DW-point of~$v$ is also at~$0$. It follows that $\BF_0$ is a topologically closed semigroup in~$\Hol(\UH,\UH)$. Therefore, according to \cite[Theorem~2]{GHP}
the absolutely continuous reverse evolution family generated by a Herglotz vector field $\phi$ is contained in~$\BF_0$ if and only if ${\phi(\cdot,t)\in\Gen(\BF_0)}$ for a.e.\,$t\ge0$.

Thus, taking into account representation~\eqref{EQ_inf-gen-represent-DWzero}, we see that it is sufficient to prove  the following statement: let ${\phi:\UH\times[0,\infty)\to\Complex}$ be defined by~\eqref{eq:BF_generator-DWzero} with some $c_t,\,b_t\ge0$ and some non-negative Borel measures $\pi_t$ on $(0,\infty)$; then $\phi$ is Herglotz vector field if and only if conditions~(a) and~(b) are satisfied.

Suppose $\phi$ is a Herglotz vector field. Then by
Remark~\ref{RM_comparing two-representations-for-DWzero} and Proposition~\ref{prop:int}, we have:
\begin{itemize}
\item[(i)] ${t\mapsto b_t}$ is in $\Lloc$;
\item[(ii)] $t\mapsto c_t+\int_{[1,\infty)}\lambda\,\pi_t(\di\lambda)$ is in $\Lloc$;
\item[(iii)] the family of measures $\min\{\lambda^2,1\}\,\pi_t(\di\lambda)$ on $(0,\infty)$ is locally integrable in $t$.
\end{itemize}
In view of Lemma~\ref{lem:measurable}, assertion~(iii) implies that the family of measures $\min\{\lambda^2,\lambda\}\,\pi_t(\di\lambda)$ is a Borel kernel and hence, the function $I_2(t):=\int_{[1,\infty)}\lambda\,\pi_t(\di\lambda)$ is measurable. Therefore, taking into account that ${c_t\ge0}$ and using~(ii), we conclude that both $t\mapsto c_t$ and~$I_2$ are in $\Lloc$. Finally, by (iii), $I_1(t):=\int_{(0,1)}\lambda^2\,\pi_t(\di\lambda)$ is also in~$\Lloc$.
Now~(a) and~(b) follow immediately.

Conversely, suppose that $c_t,\,b_t\ge0$ and  non-negative Borel measures $\pi_t$ on $(0,\infty)$ satisfy conditions~(a) and~(b). Then using Remark~\ref{RM_comparing two-representations-for-DWzero} and Lemma~\ref{lem:measurable}, it is easy to show that $\phi$ defined by~\eqref{eq:BF_generator-DWzero} admits also representation~\eqref{eq:BF_generator} with a certain L\'evy family $(q_t,a_t,b_t,\pi_t)$. By Proposition~\ref{prop:int}, this means that~$\phi$ is a Herglotz vector field.
\end{proof}

\begin{remark}\label{RM-charac-BRFP-at-zero}
Note that the representation~\eqref{EQ_inf-gen-represent-DWzero} with $c\in\Real$ of arbitrary sign is commonly used in the time-homogeneous case. It characterises branching mechanisms~$\phi$ of stochastic processes with finite mean or, equivalently, infinitesimal generators of one-parameter semigroups in~$\BF$ with a BRFP at~$0$. A complex-analytic proof of this fact can be found in~\cite[Corollary~3.6]{GHP}. In the time-inhomogeneous case, one can use Theorem~\ref{thm:first_moment} to establish an analog of the theorem proved above, which provides a representation formula for Herglotz vector fields corresponding to branching processes with finite mean. It is literally the same representation~\eqref{eq:BF_generator-DWzero} as in Theorem~\ref{thm:BP-for-DWzero}, with $c_t$, $b_t$, and $\pi_t$ satisfying (a) and~(b), but with the condition~$c_t\ge0$ relaxed to~$c_t\in\Real$ for all~${t\ge0}$. A subtle difference of this more general setting is illustrated by the example given below: there exist Herglotz vector fields~$\phi$ such that representation~\eqref{eq:BF_generator-DWzero} holds but the parameters do not satisfy the integrability conditions (a) and~(b). Observe that according to the proof of Theorem~\ref{thm:BP-for-DWzero}, such a situation cannot happen when~$c_t\ge0$ for a.e.~$t\ge0$.
\end{remark}
\begin{example}\label{EX_counterexample}
Let $\lambda(t):=1/t$ for $t\in(0,1)$ and $\lambda(t):=1$ for $t\in\{0\}\cup[1,\infty)$. Define
$$
\phi(\zeta,t):=\exp(-\zeta\lambda(t))-1,\quad t\ge0,~\zeta\in\UH.
$$
Then $\phi$ admits representation~\eqref{eq:BF_generator} with $q_t=a_t=b_t=0$ and $\pi_t:=\delta_{\lambda(t)}$. Therefore, by Proposition~\ref{prop:int}, $\phi$ is a Herglotz vector field. The reverse evolution family $(v_{s,t})$ associated with~$\phi$ is contained in~$\BF$ by Theorem~\ref{thm:main_BP}. Moreover, $\phi$ satisfies  equality~\eqref{eq:BF_generator-DWzero} with the same $\pi_t=\delta_{\lambda(t)}$, ${b_t=0}$, and with ${c_t:=-\lambda(t)}$ for all ${t\ge0}$. Clearly, $t\mapsto c_t$ is not $\Lloc$. As a result, $0$ is not a BRFP of $v_{0,t}$ for any~$t>0$.
\end{example}

As the last preparatory step before proving Theorem~\ref{TH_infty-to-zero}, observe that by equality~\eqref{eq:laplace} relating $v_{s,t}$'s to transition probabilities,
$
 \mathbb P^{(s,x)}[Z_t =0] = e^{-x v_{s,t}(\infty)}
$
for all $(s,t)\in\Delta$ and all $x>0$.
Under the hypothesis that $(Z_t)$ has $D[0,\infty)$-paths, we have $\{T_0^s<\infty\} = \bigcup_{n\in \N,\, n>s} \{Z_n=0\}$ because the event $\{Z_t=0\}$ is increasing w.r.t.\ the parameter~$t$. As a result,  by the continuity of measures,
\begin{equation}\label{EQ_finite-ext-time-probab}
\mathbb P^{(s,x)}[T_0^s <\infty] = \lim_{t\to\infty} e^{-x v_{s,t}(\infty)}.
\end{equation}

\begin{proof}[\proofof{Theorem~\ref{TH_infty-to-zero}}]
According to Theorem~\ref{thm:BP-for-DWzero}, the Herglotz vector field~$\phi$  is represented for a.e.\,${t\ge0}$ by~\eqref{eq:BF_generator-DWzero}
with $c_t,\,b_t\ge0$ and some non-negative Borel measures $\pi_t$ on~$(0,\infty)$ satisfying the integrability conditions~(a) and~(b).
According to Remark~\ref{RM_comparing two-representations-for-DWzero}, ${\phi''(\infty,t) = 2b_t \ge0}$ for a.e. ${t\ge0}$. Therefore, the integral in~\eqref{EQ_int-phi-two-primes} is well-defined.

Note that $e^{-\theta x} - 1 +\theta x \ge0$ for any $\theta, x >0$. Hence,
\begin{equation}\label{EQ_phi-phi0}
\phi(\theta, t) \ge \phi_0(\theta,t):= b_t \theta^2 \qquad\text{for a.e.\ $t\ge0$ and all~$\theta>0$}.
\end{equation}
It is easy to see that $\phi_0$ is a Herglotz vector field. Denote by $(v^0_{s,t})$ the reverse evolution family associated to $\phi_0$.
Using the ODE~\eqref{EQ_ini-LK-ODE}, it is then easy to check that for all $z\in\UH$ and all~$(s,t)\in\Delta$, $$v^0_{s,t}(z)=\frac{z}{1+b(s,t)z},\quad b(s,t):=\frac12\int_s^t\phi''(\infty,r)\,\di r.$$
Taking into account~\eqref{EQ_finite-ext-time-probab}, in order to prove the theorem it is enough to show that
\begin{equation}\label{EQ_v-v0}
v_{s,t}(x)\le v^0_{s,t}(x)\quad\text{for all $x>0$ and all $(s,t)\in\Delta$}.
\end{equation}

Fix arbitrary $T>0$ and $x>0$ and denote
$$
 f(t):=v_{T-t,T}(x),\quad g(t):=v^0_{T-t,T}(x),\quad \beta := \max \{f(t), g(t): t\in [0,T]\}.
$$
Then $f(0)=x=g(0)$. Moreover, by Theorem~\ref{TH_mainBCM1} and by~\eqref{EQ_v-v0}, for a.e.~$t\in[0,T]$ we have
$$
f'(t)= - \phi\big(f(t), T-t\big)\le - \phi_0\big(f(t), T-t\big)\quad\text{and}\quad g'(t)=-\phi_0\big(g(t),T-t\big).
$$
Since $\big|\phi_0(y,T-t) - \phi_0(z,T-t)\big| \le 2\beta b_t|y-z|$ for a.e.~${t\in[0,T]}$ and all ${y,\,z \in [0,\beta]}$, it follows with the help of a standard argument (see e.g. Lemma~\ref{lem:comparison} in the Appendices) that $f\le g$ on~$[0,T]$. This proves~\eqref{EQ_v-v0}, as required.
\end{proof}

\addtocontents{toc}{\SkipTocEntry}\subsubsection{The Denjoy\,--\,Wolff point at infinity} \label{subsub:infty}
Recall that $f\in\Hol(\UD,\UD)\setminus\{\id_\UH\}$ has the DW-point at~$\infty$ if and only if $\anglim_{z\to\infty} f(z)/z\ge1$. If $f\big((0,\infty)\big)\subset(0,\infty)$, then using Wolff's Lemma, see e.g.\  \cite[Theorem~B]{GHP}, we see that $\infty$ is the DW-point of~$f$ if and only if for some and hence for any~$\theta_0\ge0$ the inequality $f(\theta)\ge\theta$ holds for all ${\theta>\theta_0}$.

We start by establishing a probabilistic interpretation for the DW-point at~$\infty$. Next we characterize Herglotz vector fields that generate reverse evolution families in~$\BF$ with the DW-point (or more generally, BRFP) at~$\infty$ and use this characterization to obtain a sufficient condition for almost sure explosion within a finite time.

\begin{theorem}\label{thm:non-decreasing}
Let $(Z_t)$ be a branching process and let $(v_{s,t})$ be the associated topological reverse evolution family. The following four conditions are equivalent:
\begin{enumerate}[label=\rm(\alph*)]
\item\label{item:a} $(v_{s,t})\subset\BF_\infty$;
\item\label{item:a2} for any $0 \le s\le t \le u$ and any $x>0$, $\mathbb P^{(s,x)}\big[Z_{t}\le Z_u\big]=1$;
\item\label{item:b} for any $(s,t)\in\Delta$ and any $x>0$, $\mathbb P^{(s,x)}\big[Z_{t}\ge x\big]=1$;
\item\label{item:c} for any $(s,t)\in\Delta$ there exists $x>0$  such that\, $\mathbb P^{(s,x)}\big[Z_{t}\ge x\big]=1$.
\end{enumerate}
Moreover, if $(Z_t)_{t\ge0}$ has \cadlag paths, then the above conditions are further equivalent to that $(Z_t)$ has non-decreasing paths almost surely.
\end{theorem}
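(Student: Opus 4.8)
The plan is to first restate the analytic condition \ref{item:a} in a form that speaks directly about the Laplace exponents on the positive axis, and then to close a cycle of implications among the four conditions, using the identity \eqref{eq:laplace} as the only dictionary between analysis and probability. By the characterization of $\BF_\infty$ recalled at the beginning of this subsection, a non-identity $v\in\BF$ belongs to $\BF_\infty$ if and only if the coefficient $\beta$ in \eqref{Bernstein} satisfies $\beta\ge1$; since such a $v$ is concave with $v'\ge\beta$ and $v(0)\ge0$, this is equivalent to $v(\theta)\ge\theta$ for every $\theta>0$ (for the converse one uses $v(\theta)/\theta\to\beta$ as $\theta\to\infty$). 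Thus \ref{item:a} is equivalent to the statement that $v_{s,t}(\theta)\ge\theta$ for all $(s,t)\in\Delta$ and all $\theta>0$, and this is the form I would work with throughout.

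The two implications connecting \ref{item:a} with the single-point estimate are where the Laplace transform does the work, and the step \ref{item:a}$\Rightarrow$\ref{item:b} is the one I expect to be the main obstacle. For \ref{item:c}$\Rightarrow$\ref{item:a}, fix $(s,t)\in\Delta$ and let $x>0$ be as in \ref{item:c}; since $Z_t\ge x$ holds $\mathbb P^{(s,x)}$-a.s., we have $e^{-\theta Z_t}\ind_{\{Z_t<\infty\}}\le e^{-\theta x}$ pointwise, and by \eqref{eq:laplace} this gives $e^{-x v_{s,t}(\theta)}=\mathbb E^{(s,x)}[e^{-\theta Z_t}\ind_{\{Z_t<\infty\}}]\le e^{-\theta x}$, i.e.\ $v_{s,t}(\theta)\ge\theta$ for every $\theta>0$; as $(s,t)$ was arbitrary, \ref{item:a} follows. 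The reverse passage \ref{item:a}$\Rightarrow$\ref{item:b} requires upgrading a Laplace estimate to an almost-sure inequality: from $v_{s,t}(\theta)\ge\theta$ we only know $\mathbb E^{(s,x)}[e^{-\theta Z_t}\ind_{\{Z_t<\infty\}}]\le e^{-\theta x}$ for all $\theta$. I would argue by contradiction: if $\mathbb P^{(s,x)}[Z_t\le x']>0$ for some $x'<x$, then $\mathbb E^{(s,x)}[e^{-\theta Z_t}\ind_{\{Z_t<\infty\}}]\ge e^{-\theta x'}\,\mathbb P^{(s,x)}[Z_t\le x']$, which exceeds $e^{-\theta x}$ once $\theta$ is large, since $x'<x$; this contradiction forces $\mathbb P^{(s,x)}[Z_t\ge x]=1$ for every $x>0$.

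The remaining links are purely probabilistic and comparatively routine. For \ref{item:b}$\Rightarrow$\ref{item:a2} I would condition on $\mathcal F_{[s,t]}$ in \eqref{eq:Markov}: the conditional law of $Z_u$ is $k_{t,u}(Z_t,\cdot)$, and \ref{item:b} applied to the pair $(t,u)$, together with \ref{k5} and \ref{k6} for the boundary values, shows $k_{t,u}(y,[y,\infty])=1$ for every $y\in[0,\infty]$; hence $\mathbb P^{(s,x)}[Z_u\ge Z_t\mid\mathcal F_{[s,t]}]=1$ a.s., and integrating yields \ref{item:a2}. The converse \ref{item:a2}$\Rightarrow$\ref{item:b} is immediate on taking $t=s$ in \ref{item:a2} and invoking the initial condition \eqref{eq:initial}, which gives $Z_s=x$ $\mathbb P^{(s,x)}$-a.s.\ and turns $Z_s\le Z_u$ into $Z_u\ge x$. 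Finally \ref{item:b}$\Rightarrow$\ref{item:c} is trivial.

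Chaining \ref{item:a}$\Rightarrow$\ref{item:b}$\Rightarrow$\ref{item:c}$\Rightarrow$\ref{item:a} and adjoining \ref{item:b}$\Leftrightarrow$\ref{item:a2} establishes the equivalence of all four conditions. The only points demanding care are the handling of the mass of $Z_t$ at $\infty$ in the Laplace identity \eqref{eq:laplace} and the freezing argument implicit in conditioning on $\mathcal F_{[s,t]}$ (so that $k_{t,u}(Z_t,[Z_t,\infty])$ is a legitimate $\mathcal F_{[s,t]}$-measurable random variable); neither is deep, but both must be stated precisely.
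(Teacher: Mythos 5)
Your proposal is correct and follows essentially the same route as the paper: both restate \ref{item:a} as $v_{s,t}(\theta)\ge\theta$ for all $\theta>0$, prove \ref{item:a}$\Rightarrow$\ref{item:b} and \ref{item:c}$\Rightarrow$\ref{item:a} by the same Laplace-transform bounds with $\theta\to\infty$ (your contradiction argument is just the paper's direct estimate $k_{s,t}(x,[0,u])\le e^{-\theta(x-u)}$ rephrased), and obtain \ref{item:b}$\Leftrightarrow$\ref{item:a2} by integrating $k_{t,u}(y,[y,\infty])=1$ against $k_{s,t}(x,\cdot)$, which the paper does via the finite-dimensional distribution formula \eqref{eq:FD} rather than your conditioning on $\mathcal F_{[s,t]}$ — a purely presentational difference.
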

\begin{proof}
Note that in case $(Z_t)$ has \cadlag paths, it is not difficult to see that~\ref{item:a2} is equivalent to having $${\mathbb P^{(s,x)}\big[Z_t \le Z_u\text{~whenever~$s\le t\le u$}\big]\,=\,1}$$ for any ${x>0}$ and any ${s\ge0}$.

To show the equivalence among the four conditions, we first suppose that~\ref{item:a} holds. As mentioned above, condition \ref{item:a} is equivalent to that $v_{s,t}(\theta)\ge\theta$ for all $\theta>0$ and $(s,t) \in \Delta$.
Let $0<u<x$ and $\theta>0$. Then
\begin{equation*}
e^{-x v_{s,t}(\theta)}= \int_{[0,\infty)} e^{-  \theta y} k_{s,t}(x, \di y) \ge  \int_{[0,u]} e^{-  \theta y} k_{s,t}(x, \di y) \ge   e^{-  \theta u} k_{s,t}(x,[0,u])
\end{equation*}
\vspace{.75ex}
and hence \qquad\quad
$
k_{s,t}(x, [0,u])  \le e^{\theta u}e^{-x v_{s,t}(\theta)} \le e^{ \theta u}e^{-x\theta}=e^{-\theta (x-u)}.
$\\[.75ex]
Letting $\theta\to +\infty$  yields $k_{s,t}(x,[0,u])=0$ for all $u\in(0,x)$. It follows that
$ k_{s,t}(x,[0,x))=0,$ or equivalently that $\mathbb P^{(s,x)}\big[Z_{t}<x\big]=0.$ This proves the implication \ref{item:a}~$\Rightarrow$~\ref{item:b}.

Obviously, \ref{item:b}~$\Rightarrow$~\ref{item:c}.
Now suppose that \ref{item:c} holds, or equivalently that $ k_{s,t}(x,[0,x))=0$ for any $(s,t)\in\Delta$ and some~$x>0$, which may depend on~$s$ and~$t$.
Then
\begin{equation*}
e^{-x v_{s,t}(\theta)}= \int_{[x,\infty)} e^{-  \theta y} k_{s,t}(x, \di y) \le  \int_{[x,\infty)} e^{-  \theta x} k_{s,t}(x, \di y) \le e^{-\theta x}.
\end{equation*}
 Hence, we get $v_{s,t}(\theta) \ge \theta$ for any ${(s,t)\in\Delta}$ and any ${\theta>0}$, i.e.\ ${(v_{s,t})\subset\BF_\infty}$.

\newcommand{\Delt}{\Delta^{\!\infty}}

It remains to establish the equivalence of \ref{item:a2} and \ref{item:b}. The implication  \ref{item:a2}~$\Rightarrow$~\ref{item:b} is obvious. Conversely, suppose that \ref{item:b} holds, which can be written as $k_{t,u}\big(y, [y,\infty]\big)=1$ for all ${(t,u) \in \Delta}$ and all ${y>0}$. Denote $\Delt:= \{(y,z): 0 \le y \le z \le \infty\}$.
Using \eqref{eq:FD}, for any ${n \in \N_0}$ and any ${0 \le s \le t \le u <\infty}$ we get
\begin{align*}
\mathbb P^{(s,x)}[ Z_t \le Z_u]
&=\mathbb P^{(s,x)}[(Z_t, Z_u)\in\Delt] = \int_{\Delt}\!\! k_{s,t}(x, \di y)\,k_{t,u}(y,\di z)  \\
&= \int_{[0,\infty]}\! k_{s,t}(x, \di y) ~=~1,\mathrlap{\quad\text{as desired.}} \qedhere
\end{align*}
\end{proof}

We are able to give a probabilistic interpretation also for the angular derivative at~$\infty$, which for any  $v\in\BF$ can be calculated as $v'(\infty)=\lim_{(0,\infty)\ni\theta\to\infty}v(\theta)/\theta$; see Appendix~\ref{App-Fixed}.
\begin{remark}
Recall that $\mathbb E^{(s,x)}[e^{-\theta Z_t}] = e^{-x v_{s,t}(\theta)}$. This can be written as
$$
\|e^{-Z_t} \|_{L^\theta(\mathbb P^{(s,x)})}= \exp\big(-x {v_{s,t}(\theta)}/{\theta}\big).
$$
 Hence, taking the limit $\theta\to\infty$, we get
$
\|e^{-Z_t} \|_{L^\infty(\mathbb P^{(s,x)})} = e^{-x v_{s,t}'(\infty)}.
$
This is equivalent to
$
\text{$\mathbb P^{(s,x)}$-ess}\inf Z_t = x v_{s,t}'(\infty).
$
In particular, $\infty$ is a BRFP of~$v_{s,t}$ if and only if $\mathbb P^{(s,x)}\text{-ess}\inf Z_t>0$ for some and hence all $x>0$.   Similarly, $\infty$ is a DW-point of~$v_{s,t}$ if and only if ${\mathbb P^{(s,x)}\text{-ess}\inf Z_t\ge x}$ for some and hence all ${x>0}$.  The latter is yet another way to express ``monotonicity'' of the process $(Z_t)$, i.e.\ another condition equivalent to conditions \ref{item:b}\,--\,\ref{item:c} in Theorem~\ref{thm:non-decreasing}.
\end{remark}
\begin{theorem}\label{TH_HF-DW-infty}
A function $\phi:\UH\times[0,\infty)\to\Complex$ is a Herglotz vector field of a reverse evolution family $(v_{s,t}) \subset \BF$ with a common BRFP at $\infty$ if and only if it admits the following representation:
\begin{equation}\label{eq:BRFP_infty}
\phi(\zeta, t) = -q_t+  d_t \zeta + \int_0^\infty (e^{-\zeta x} - 1) \,\pi_t(\di x), \qquad \zeta \in \UH, ~ a.e.\ t\ge0,
\end{equation}
with some $q_t\ge0$, $d_t \in \R$ and some non-negative measures $\pi_t$  on $(0,\infty)$ satisfying the following conditions:
\begin{itemize}
\item[\rm (a)]  $t\mapsto q_t$ and $t\mapsto d_t$ are in $\Lloc$;
\item[\rm (b)] $\min \{x,1\} \pi_t(\di x)$ is locally integrable in $t\ge0$.
\end{itemize}
In such a case,
\begin{equation}\label{EQ_der-at-infty}
v_{s,t}'(\infty)=\exp\Big(-\int_{s}^{t}d_r\,\di r\Big)\quad\text{for all~$(s,t)\in\Delta$.}
\end{equation}
\end{theorem}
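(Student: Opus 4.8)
The plan is to parallel the treatment of the boundary regular fixed point at~$0$ in Theorem~\ref{thm:first_moment}, but to work at the boundary point~$\infty$. The decisive tool will again be \cite[Theorem~1.1]{BRFP2015}, applied after converting the reverse evolution family into an evolution family via \cite[Remark~2.8]{GHP}. I emphasize at the outset that one cannot simply invoke Theorem~\ref{TH_EvolFam-HVF} with the semigroup of Bernstein functions having a BRFP at~$\infty$: although that set is closed under composition (the chain rule gives $(f\circ g)'(\infty)=f'(\infty)g'(\infty)$), it is \emph{not} topologically closed, since a locally uniform limit of Bernstein functions with $\beta_n>0$ may have $\beta=0$. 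This is precisely why a result tailored to boundary \emph{regular} fixed points of evolution families, rather than to topologically closed semigroups, is the right instrument.

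First I would carry out the purely algebraic identification of the spectral value at~$\infty$. Starting from a Herglotz vector field of the Lévy form~\eqref{eq:BF_generator}, governed by a Lévy family $(q_t,a_t,b_t,\pi_t)$ (which, by Theorem~\ref{thm:main_BP}, is exactly what it means for $\phi$ to drive a reverse evolution family in~$\BF$), I would show that the dilation
\[
d_t:=\lim_{\theta\to\infty}\frac{\phi(\theta,t)}{\theta}
\]
is finite if and only if $b_t=0$ and $\int_{(0,1)}x\,\pi_t(\di x)<\infty$, in which case $d_t=a_t+\int_{(0,1)}x\,\pi_t(\di x)$. The only analytic content is that on $(0,1)$ the integrand $\theta^{-1}\big(e^{-\theta x}-1+\theta x\big)$ is non-negative and increases monotonically to~$x$ as $\theta\to\infty$, so monotone convergence handles the $(0,1)$-part, while the $[1,\infty)$-part is $O(1/\theta)$ since $\pi_t([1,\infty))<\infty$; a nonzero $b_t$ would instead force $d_t=+\infty$ through the quadratic term. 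When $d_t$ is finite, absorbing $\zeta\int_{(0,1)}x\,\pi_t(\di x)$ into the linear coefficient rewrites~\eqref{eq:BF_generator} as~\eqref{eq:BRFP_infty}, and conversely every $\phi$ of the form~\eqref{eq:BRFP_infty} is of Lévy form with $b_t=0$. Hence $d_t$ is the spectral value of $\phi(\cdot,t)$ at~$\infty$ (equivalently $d_t=\lim_{\theta\to\infty}\phi'(\theta,t)$), and $\infty$ is a BRFP of the associated one-parameter semigroup exactly when $d_t<\infty$.

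With this in hand, both implications follow from \cite[Theorem~1.1]{BRFP2015} applied at~$\infty$: the family $(v_{s,t})$ has a common BRFP at~$\infty$ if and only if, for a.e.~$t$, the spectral value $d_t$ is finite and the spectral function $t\mapsto d_t$ lies in $\Lloc$, and in that case the multiplier is $v_{s,t}'(\infty)=\exp\big(-\int_s^t d_r\,\di r\big)$, which is exactly~\eqref{EQ_der-at-infty}. It then remains to reconcile the integrability bookkeeping. In the forward direction, finiteness of $d_t$ a.e.\ gives $b_t=0$ and $\int_{(0,1)}x\,\pi_t(\di x)<\infty$ a.e.\ (hence the representation~\eqref{eq:BRFP_infty}); condition~(a) follows from $q_t\in\Lloc$ (Lévy family) together with $d_t\in\Lloc$ (the spectral condition); and condition~(b) follows by writing $\int_{(0,1)}\min\{x,1\}\,\pi_t(\di x)=\int_{(0,1)}x\,\pi_t(\di x)=d_t-a_t\in\Lloc$ and $\pi_t([1,\infty))=\int_{[1,\infty)}\min\{x^2,1\}\,\pi_t(\di x)\in\Lloc$, with measurability supplied by Remark~\ref{RM_about-measures}. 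In the converse direction I would first check, via Proposition~\ref{prop:int}, that~(a) and~(b) force $(q_t,\,a_t:=d_t-\int_{(0,1)}x\,\pi_t(\di x),\,0,\,\pi_t)$ to be a Lévy family (here $\min\{x^2,1\}\le\min\{x,1\}$ makes~(b) imply the Lévy-measure integrability), so that $\phi$ indeed drives a reverse evolution family in~$\BF$ by Theorem~\ref{thm:main_BP}; since each $d_t$ is then finite with $t\mapsto d_t\in\Lloc$, \cite[Theorem~1.1]{BRFP2015} yields the common BRFP at~$\infty$ and the multiplier formula.

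The main obstacle is the careful transcription of \cite[Theorem~1.1]{BRFP2015} from its original formulation (evolution families, generic boundary point) to the point~$\infty$ of~$\UH$: one must verify that the ``fixed-point plus regularity'' hypothesis there is captured by the single requirement that the dilation $d_t$ be finite, and that the sign conventions in the spectral-function formula produce the minus sign in~\eqref{EQ_der-at-infty} after the reverse-to-forward passage of \cite[Remark~2.8]{GHP}. A useful consistency check is the homogeneous case $\phi(\zeta)\equiv d\zeta$, for which $v_{s,t}(\zeta)=e^{-d(t-s)}\zeta$ and $v_{s,t}'(\infty)=e^{-d(t-s)}$ agree with~\eqref{EQ_der-at-infty}.
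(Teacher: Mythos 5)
Your proposal is correct and follows essentially the same route as the paper: the paper likewise reduces the statement to the boundary-regular-fixed-point theorem for evolution families (cited there as \cite[Theorem~C]{GHP}, which is the transcription of \cite[Theorem~1.1]{BRFP2015} that you invoke directly via \cite[Remark~2.8]{GHP}), combined with the pointwise characterization of generators in $\Gen(\BF)$ having finite dilation at~$\infty$ (cited there as \cite[Corollary~3.7]{GHP}, which you instead re-prove by the monotone-convergence computation), and with the same Proposition~\ref{prop:int}-based integrability bookkeeping. The only noteworthy deviations are minor: for condition~(b) you split the integral at $x=1$ and use $\int_{(0,1)}x\,\pi_t(\di x)=d_t-a_t\in\Lloc$, whereas the paper uses the bound $\min\{x,1\}\le e(e-1)^{-1}(1-e^{-x})$ together with the identity $\int_0^\infty(1-e^{-x})\,\pi_t(\di x)=-q_t+d_t-\phi(1,t)$; and your conversion $a_t=d_t-\int_{(0,1)}x\,\pi_t(\di x)$ carries the correct sign, where the paper's Remark~\ref{RM_HF-DW-infty} misprints a plus.
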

\begin{remark}\label{RM_HF-DW-infty}
If $\phi$ admits representation~\eqref{eq:BRFP_infty} with the parameters satisfying the conditions stated in the above Theorem~\ref{TH_HF-DW-infty}, then $\phi$ admits also the Silverstein-type representation~\eqref{eq:BF_generator} with $a_t:=d_t-\int_{(0,1)}x\,\pi_t(\di x)$, ${b_t:=0}$ and with the same $q_t$ and~$\pi_t$ as in~\eqref{eq:BRFP_infty}.
\end{remark}
Before proving Theorem~\ref{TH_HF-DW-infty}, we establish  two direct corollaries.
Recall that $v\in\BF$ belongs to~$\BF_\infty$ if and only if~$v'(\infty)\ge0$. Therefore, with~\eqref{EQ_der-at-infty} taken into account, Theorem~\ref{TH_HF-DW-infty} implies the following characterization of Herglotz vector fields of absolutely continuous reverse evolution families contained in~$\BF_\infty$.
\begin{corollary}\label{cor:DW_infty}
 A function $\phi:\UH\times[0,\infty)\to\Complex$ is a Herglotz vector field of a reverse evolution family $(v_{s,t}) \subset \BF_\infty$  if and only if $\phi$ satisfies the necessary and sufficient condition of Theorem~\ref{TH_HF-DW-infty} with~${d_t \le0}$ for a.e.~${t\ge0}$.
\end{corollary}

As the following example shows, an observation analogous to that in Remark~\ref{RM-charac-BRFP-at-zero} is valid for the fixed point at~$\infty$: if a Herglotz vector field~$\phi$ admits representation~\eqref{eq:BRFP_infty} with ${d_t\le0}$ for a.e.~${t\ge0}$, then the parameters automatically satisfy the integrability conditions stated in Theorem~\ref{TH_HF-DW-infty}; this is not the case any more without the assumption that ${d_t\le0}$.
\begin{example}\label{EX_example}
  Let \textcolor{black}{${\phi(\zeta,0):=0}$ and $\phi(\zeta,t):=t^{-1}\zeta+t^{-2}(e^{-t\zeta}-1)$ for all ${t>0}$ and all ${\zeta\in\UH}$. This is a Herglotz vector field by the equivalence of (i) and~(ii) in Proposition~\ref{prop:int}. Clearly, $\phi$~is of the form~\eqref{eq:BRFP_infty} for each~$t\ge0$, but $d_t=\phi'(\infty,t)=t^{-1}$ is not~$\Lloc$.}
\end{example}

In view of~\eqref{EQ_finite-ext-time-probab}, we have another immediate corollary of Theorem~\ref{TH_HF-DW-infty}, which gives a sufficient condition for almost surely no extinction within a finite time. With the help of  Theorem~\ref{thm:cadlag} we may assume that the branching process~$(Z_t)$ associated to a given reverse evolution family ${(v_{s,t})\subset\BF}$ has sample paths of class~$D[0,\infty)$.
\begin{corollary}\label{TH_infty-to-nonzero}
Suppose that $\phi$ satisfies the necessary and sufficient condition of Theorem~\ref{TH_HF-DW-infty}. Then for the associated branching process~$(Z_t)$ with $D[0,\infty)$-paths, we have $\mathbb P^{(s,x)}[T_0^s <\infty]=0$ for any ${x>0}$ and any~${s\ge0}$.
\end{corollary}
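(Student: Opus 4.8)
The plan is to reduce the statement to the single observation that a boundary regular fixed point at~$\infty$ forces $v_{s,t}(\infty)=\infty$; once this is in hand, the probabilistic conclusion follows immediately from the Laplace-transform identity for $\mathbb P^{(s,x)}[Z_t=0]$ together with~\eqref{EQ_finite-ext-time-probab}.

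First I would unpack the hypothesis. By Theorem~\ref{TH_HF-DW-infty}, the assertion that $\phi$ satisfies the stated necessary and sufficient condition is equivalent to saying that the absolutely continuous reverse evolution family $(v_{s,t})$ generated by $\phi$ is contained in $\BF$ and has a \emph{common} boundary regular fixed point at~$\infty$. In particular, for every $(s,t)\in\Delta$ the point $\infty$ is a boundary fixed point of $v_{s,t}$, and by the convention recorded in Remark~\ref{RM_DW-Bernstein}, which interprets $v_{s,t}(\infty)$ as the limit in~\eqref{EQ_BF-limits}, this means precisely that
$$
v_{s,t}(\infty)=\lim_{\theta\to\infty}v_{s,t}(\theta)=\infty\qquad\text{for all }(s,t)\in\Delta.
$$
Equivalently, one can read this off the integral representation~\eqref{Bernstein}: the angular derivative $v_{s,t}'(\infty)$ equals the linear coefficient $\beta$, so the BRFP condition $v_{s,t}'(\infty)\neq0$ yields $\beta>0$ and hence $v_{s,t}(\theta)\ge\beta\theta\to+\infty$.

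Next I would transfer this to the process. From Remark~\ref{RM_extinction-time-via-LE} (which in turn rests on~\eqref{eq:laplace} extended to $\theta=\infty$) one has $\mathbb P^{(s,x)}[Z_t=0]=e^{-x v_{s,t}(\infty)}$ for all $(s,t)\in\Delta$ and $x>0$. Since $v_{s,t}(\infty)=\infty$, each such probability vanishes. Finally, taking $(Z_t)$ to be the c\`adl\`ag version supplied by Theorem~\ref{thm:cadlag}, as is tacitly assumed whenever the extinction time is discussed, I would apply~\eqref{EQ_finite-ext-time-probab} to conclude
$$
\mathbb P^{(s,x)}[T_0^s<\infty]=\lim_{t\to\infty}e^{-x v_{s,t}(\infty)}=0.
$$
One could equally bypass~\eqref{EQ_finite-ext-time-probab} and use the decomposition $\{T_0^s<\infty\}=\bigcup_{n\in\N,\,n>s}\{Z_n=0\}$ together with $\mathbb P^{(s,x)}[Z_n=0]=0$ and countable subadditivity.

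I do not anticipate a genuine obstacle: the corollary is, as the preceding text asserts, immediate. The only step warranting a line of care is the identification $v_{s,t}(\infty)=\infty$. The point to emphasize is that the \emph{regularity} of the fixed point, i.e.\ finiteness of the angular derivative, plays no role here; what matters is merely that $\infty$ is a fixed point at all, so that the atom $k_{s,t}(x,\{0\})$ carries zero mass and the event $\{Z_t=0\}$ is null.
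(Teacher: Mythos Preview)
Your proposal is correct and follows exactly the line the paper intends: the text preceding the corollary says it is immediate from~\eqref{EQ_finite-ext-time-probab}, and your argument simply unpacks this, observing that the common BRFP at~$\infty$ forces $v_{s,t}(\infty)=\infty$ and hence $e^{-xv_{s,t}(\infty)}=0$. Your additional remarks (the $\beta>0$ justification via~\eqref{Bernstein} and the countable-subadditivity alternative) are sound elaborations of what the paper leaves implicit.
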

\begin{proof}[\proofof{Theorem~\ref{TH_HF-DW-infty}}]
Suppose that $\phi:\UH\times[0,\infty)\to\Complex$ is a Herglotz vector field. Denote by $(v_{s,t})$ the reverse evolution family associated with~$\phi$.
Combining \cite[Theorem~D, Corollary~3.3, and Corollary~3.7]{GHP},  one can conclude that $(v_{s,t})$ is contained in~$\BF$ and has a common BRFP at $\infty$ if and only if the following two conditions are satisfied:
\begin{itemize}
\item[(i)] for a.e.\ $t\ge0$, $\phi(\cdot, t)$ is of the form~\eqref{eq:BRFP_infty} for some $q_t \ge0, d_t \in \R$ and non-negative measures $\pi_t$ such that $\int_0^\infty \min\{x,1\} \pi_t(\di x)<\infty$;

\item[(ii)] $t\mapsto \phi'(\infty, t)=d_t$ is in $\Lloc$.
\end{itemize}
In view of Remark~\ref{RM_HF-DW-infty} and Proposition~\ref{prop:int}, condition~(i) implies that $t\mapsto q_t$ is in $\Lloc$ and that $(\pi_t)_{t\ge0}$ is a Borel kernel. It further follows that conditions (i) and~(ii) imply that $\min \{x,1\} \pi_t(\di x)$ is locally integrable in~$t\ge0$, because
\[
 \int_0^\infty (1-e^{-x} ) \,\pi_t(\di x)  =  -q_t +d_t -\phi(1,t)
\]
and $\min\{x,1\}\le e(e-1)^{-1}(1-e^{-x})$ on $(0,\infty)$. This proves the necessity part.

As for sufficiency, it is remains to notice that any function~$\phi:\UH\times[0,\infty)\to\Complex$ of the form~\eqref{eq:BRFP_infty} with $q_t$,~$d_t$, and $\pi_t$ satisfying integrability conditions (a) and~(b) is a Herglotz vector field. Indeed, if (a) and~(b) hold, then $\big(a_t,0,q_t,\pi_t\big)$, where $a_t:=d_t-\int_{(0,1)}x\,\pi_t(\di x)$, is a L\'evy family and hence the desired conclusion follows by appealing again to Remark~\ref{RM_HF-DW-infty} and Proposition~\ref{prop:int}.

Finally, \eqref{EQ_der-at-infty} can be obtained from the corresponding formula in \cite[Theorem~D]{GHP} with the help of the Cayley transformation $H(z):=(1+z)/(1-z)$, which maps $\UD$ conformally onto~$\UH$ and takes the boundary point~$1$ to~$\infty$.
\end{proof}

Similarly to the extinction time, we can define the \textsl{explosion time}
\begin{equation}\label{EQ_exp-time}
 T_\infty^s \coloneqq \inf\{t \ge s: Z_t = \infty\}.
\end{equation}
Again we assume that the process $(Z_t)$ is a $D[0,\infty)$-valued random variable. According to~\eqref{eq:derivative_BP}, the probability of finite explosion time is given by
\begin{equation}\label{EQ_explosion-probab}
 \mathbb P^{(s,x)}[T_\infty^s <\infty]=1-\lim_{t\to\infty}e^{-xv_{s,t}(0)}\quad\text{for any~$s\ge0$ and $x>0$.}
\end{equation}
\begin{theorem}\label{TH_as-explosion}
Let $(v_{s,t})\subset\BF$ be an absolutely continuous reverse evolution family with a common BRFP at~$\infty$ and associated Herglotz vector field~$\phi$. Suppose that
\begin{equation}\label{EQ_as-explosion}
  \int_0^{\infty}\!\!\phi(0,t)\exp\!\big(-{\textstyle\int\limits_0^t \phi'(\infty,s)\,\di s}\big)\,\di t~=~-\infty.
\end{equation}
Then for any $s\ge0$,
$$
 \lim\limits_{t\to\infty} v_{s,t}(0)=\infty
$$
and hence for the branching process~$(Z_t)$ with $D[0,\infty)$-paths, associated with~$(v_{s,t})$, we have $\mathbb P^{(s,x)}[T_\infty^s<\infty]=1$ for any ${x>0}$ and any~${s\ge0}$.
\end{theorem}
\begin{proof}
By Theorem~\ref{TH_HF-DW-infty}, $\phi$ is of the form~\eqref{eq:BRFP_infty}. Hence,
$$
\phi(\theta,t)\le\phi_0(\theta,t):=-q_t+d_t\theta
$$
for a.e.\,$t\ge0$ and all $\theta>0$. It is easy to see that $\phi_0$ is a Herglotz vector field and that the  reverse evolution family~$(v_{s,t}^0)$ is given for all $z\in\UH$ and $(s,t)\in\Delta$ by
$$
  v_{s,t}^0(z)\,=\,\alpha(s,t)\,z\,+\int_s^t\!\alpha(s,r)\,q_r\,\di r,~\,~ \alpha(s,t):=\exp\Big(-\int_s^t d_r\,\di r\Big)=v'_{s,t}(\infty).
$$
Arguing as in the proof of Theorem~\ref{TH_infty-to-zero}, we may conclude that $v_{s,t}(\theta)\ge v_{s,t}^0(\theta)$ for any ${\theta>0}$ and ${(s,t)\in\Delta}$. Since ${q_t=-\phi(0,t)}$
and ${d_t=\phi'(0,\infty)}$ for a.e.\,${t\ge0}$, condition~\eqref{EQ_as-explosion} implies that
$$
v_{s,t}(\theta)~>~\int_s^t\!\alpha(s,r)\,q_r\,\di r~\longrightarrow~+\infty\qquad\text{for any~$\theta>0$}
$$
and hence $v_{s,t}(0)\to+\infty$ as $t\to\infty$. By~\eqref{EQ_explosion-probab} it follows that ${\mathbb P^{(s,x)}[T_\infty^s <\infty]=1}$ for any ${s\ge0}$ and ${x>0}$.
\end{proof}

\addtocontents{toc}{\SkipTocEntry}\subsubsection{The Denjoy\,--\,Wolff point in $(0,\infty)$} Recall that according to the definition, for any ${\tau\in(0,\infty)}$ we have $\BF_\tau=\{v\in\BF:v(\tau)=\tau\}$, see e.\,g. Appendix~\ref{App-Fixed} or \hbox{\cite[Sect.\,2.3]{GHP}}.

Assuming that the Laplace exponents belong to~$\BF_\tau$, we can relate the extinction and explosion probabilities, see~\eqref{EQ_ext-time} and~\eqref{EQ_exp-time}, with the position of the DW-point~$\tau$ as follows.
\begin{theorem}\label{TH_infty-to-tau}
Let $\tau \in (0,\infty)$ and $(v_{s,t})$ be an absolutely continuous reverse evolution family in~$\BF_\tau$ with the associated Herglotz vector field~$\phi$ and the associated branching process $(Z_t)$ with $D[0,\infty)$-paths.

\begin{enumerate}[label=\rm(\roman*)]
\item\label{item:hitting_zero}
Suppose that
\begin{equation}\label{EQ_int-phi-two-primes2}
\int_0^{+\infty}\phi''(\infty,t)\,\di t=+\infty.
\end{equation}
Then for any $s\ge0$, $~\lim\limits_{t\to\infty} v_{s,t}(\infty) =\tau~$ and hence, we have
\begin{flalign*}
 &\qquad\qquad\quad
  \mathbb P^{(s,x)}[T_0^s<\infty]=e^{-x \tau} \quad\qquad\text{ for any~${~x>0}~$ and~{}~any~${~s\ge0}$.}
\end{flalign*}

\item\label{item:hitting_infty}
Suppose that
\begin{equation}\label{EQ_int-phi-at-zero}
\int_0^{+\infty}\phi(0,t)\,\di t= - \infty.
\end{equation}
Then for any $s\ge0$, $~\lim\limits_{t\to\infty} v_{s,t}(0) =\tau~$ and hence, we have
\begin{flalign*}
 &\qquad\qquad\quad
   \mathbb P^{(s,x)}[T_\infty^s <\infty]=1- e^{-x \tau} \qquad\text{ for any~$~{x>0}~$ and~{}~any~${~s\ge0}$.}
\end{flalign*}
\end{enumerate}
\end{theorem}
Conditions~\eqref{EQ_int-phi-two-primes2} and~\eqref{EQ_int-phi-at-zero} in the above theorem are essential. Even in the time-homogeneous case, it is not true in general that the boundary values at~$0$ and~$\infty$ tend to the DW-point\hspace{.075em}\footnote{For one-parameter semigroups, convergence to the Denjoy\,--\,Wolff point holds for all \textit{interior} initial conditions, see e.g. \cite[Theorem~8.3.6]{BCD-Book}. In the time-inhomogeneous case, i.e. for evolution families, the situation is more complicated, see e.g. \cite{SantiTiz}.}. Counterexamples can be easily constructed using representation~\eqref{eq:DW_finite_time_hom} below.
\smallskip

For the proof, we first  deduce  an integral representation for Bernstein generators with the specified interior DW-point. Let $P_*$ stand for the entire function given by
\begin{equation*}
P_*(z):=\big(e^{-z}-1+z\big)/z^2\quad \text{for~}~z\in\C\setminus\{0\},\qquad P_*(0)=\tfrac12.
\end{equation*}
\begin{theorem}\label{cor:DW_finite}
Let $\tau \in (0,\infty)$. A function $\phi:\UH\times[0,\infty)\to\C$ is a Herglotz vector field of an absolutely continuous reverse evolution family contained in~$\BF_\tau$ if and only if for a.e.\ ${t\ge0}$ and all ${\zeta\in\UH}$,
\begin{equation} \label{eq:DW_finite2}
\phi(\zeta,t) = (\zeta - \tau)\left[\,\widehat a_t + b_t \zeta + (\zeta-\tau)\int_0^\infty\!\! x^2 e^{-\tau x}P_*\big((\zeta-\tau)x\big) \,\pi_t(\di x)\, \right]
\end{equation}
with some $\widehat a_t,b_t\ge0$ and some L\'evy measures $\pi_t$ on $(0,\infty)$ such that:
\begin{enumerate}[label=\rm(\alph*)]
\item  the functions $t\mapsto\widehat a_t$ and $t\mapsto b_t$ are of class $\Lloc$;
\item $(\pi_t)_{t\ge0}$ is a Borel kernel;
\item
$
  \widehat a_t \ge\,  \tau\int_0^\infty x^2 e^{-\tau x}P_*(-\tau x)\,\pi_t(\di x)~
$
for a.e. $\,t\ge0$.
\end{enumerate}
\end{theorem}
\begin{proof} We observe that the reverse evolution family~$(v_{s,t})$ associated to a Herglotz vector field~$\phi$ is contained in~$\BF_\tau$ with ${\tau\in(0,\infty)}$ if and only if ${(v_{s,t})\subset\BF}$ and ${\phi(\tau,t)=0}$ for a.e.\ ${t\ge0}$. Note also that ${P_*(-\tau x)\ge\tfrac12}$ for all~${x\ge0}$. Bearing these simple facts in mind and using Proposition~\ref{prop:int} with ${\theta_0:=\tau}$ in condition~\ref{item:HVF2}, it suffices to prove that the set of infinitesimal generators $\Gen(\BF_\tau)={\big\{\phi \in \Gen(\BF)\colon\phi(\tau)=0 \big\}}$ coincides with the set of all functions ${\phi:\UD\to\Complex}$ representable as
\begin{equation} \label{eq:DW_finite_time_hom}
\phi(\zeta) = (\zeta - \tau)\left[\,\widehat a + b \zeta + (\zeta-\tau)\int_0^\infty\!\! x^2 e^{-\tau x}P_*\big((\zeta-\tau)x\big) \,\pi(\di x)\, \right]
\end{equation}
with $\widehat a,b\ge0$ and a L\'evy measure~$\pi$ on $(0,\infty)$ such that
\begin{equation}\label{eq:DW_finite10}
\widehat a \ge\,  \tau\int_0^\infty x^2 e^{-\tau x}P_*(-\tau x)\,\pi(\di x).
\end{equation}
To see this, we first suppose that $\phi\in\Gen(\BF_\tau)$. Writing Silverstein's representation~\eqref{gen} for $\phi(\zeta)$ and~$\phi(\tau)$, and taking into account that ${\phi(\tau)=0}$, it is not difficult to obtain \eqref{eq:DW_finite_time_hom} after straightforward calculations of $\phi(\zeta)=\phi(\zeta)-\phi(\tau)$, where
$$
\widehat a  \coloneqq a + b\tau + \int_0^\infty\!\! x \big( \mathbf1_{(0,1)}(x)- e^{-\tau x} \big) \,\pi(\di x).
$$
The fact $\phi(0)=-q \le 0$ implies \eqref{eq:DW_finite10}.

The above reasonings can be traced backwards to prove the converse statement. Alternatively, one can use the characterization of Bernstein generators given in~\cite[Theorem~3]{GHP}.
\end{proof}
\begin{remark}\label{RM_version-of-BP}
Comparing \eqref{eq:DW_finite_time_hom} with the Berkson\,--\,Porta representation for infinitesimal generators in~$\UH$, see \cite[eq.\,(C.2a)]{GHP}, one can rewrite this formula as ${\phi(\zeta)=(\zeta^2-\tau^2)P(\zeta)}$ for all~${\zeta\in\UH}$, where $P$ is a holomorphic function in~$\UH$ given by
\begin{equation*}
P(\zeta):=\frac{\widehat a+b\zeta}{\zeta+\tau}+\frac{\zeta-\tau}{\zeta+\tau}\int_0^{\infty}\!\! x^2 e^{-\tau x}P_*\big((\zeta-\tau)x\big) \,\pi(\di x),\quad \zeta\in\UH.
\end{equation*}
Inequality~\eqref{eq:DW_finite10} is equivalent to ${\Re P\ge0}$ in~$\UH$.  Moreover, combined with the monotonicity of ${\Real\ni\xi\mapsto\xi P_*(\xi)}$, \eqref{eq:DW_finite10} implies that ${P(\theta)\ge (\tau P(0)+b\theta)/(\theta+\tau)}$ for all~${\theta>0}$.
\end{remark}
\begin{proof}[\proofof{Theorem~\ref{TH_infty-to-tau}}]
We employ the same idea as in the proof of Theorem~\ref{TH_infty-to-zero}.
Consider the ``comparison'' Herglotz vector field of the form $\phi_\tau(\zeta,t):=\alpha_t(\zeta^2-\tau^2)$, where $\alpha_t\in L^1_{\mathrm{loc}}\big([0,\infty),[0,\infty)\big)$. The reverse evolution family~$(v^\tau_{s,t})$ associated to~$\phi_\tau$ is given for all~${\zeta\in\UH}$ and all $(s,t)\in\Delta$ by
$$
 v^\tau_{s,t}(\zeta)=\tau+\frac{ 2\tau z(\zeta,\tau)}{e^{2\tau A(s,t)}-z(\zeta,\tau)},\quad
 z(\zeta,\tau):=\frac{\zeta-\tau}{\zeta+\tau}\in\UD,~\,~A(s,t):=\int_s^t\alpha_r\,\di r.
$$
If
$A_\infty:=\int_0^\infty\alpha_r\,\di r=+\infty$,
 then for any $s\ge0$, $v^\tau_{s,t}\to\tau$ \textit{uniformly} in~$\UH$ as ${t\to+\infty}$.\vspace{-.64ex}

\StepP{\ref{item:hitting_zero}} Using representation \eqref{eq:DW_finite2}, we see that $\phi''(\infty,t) = 2b_t \ge0$; hence, the integral in~\eqref{EQ_int-phi-two-primes2} is well-defined.
Choose $\alpha_t:=\phi''(\infty,t)/4=b_t/2$. Then $A_\infty=+\infty$. Moreover, thanks to Remark~\ref{RM_version-of-BP}, we have ${\phi(\theta, t) \ge \phi_\tau(\theta,t)}$ for all $\theta\in[\tau,\infty)$ and a.e.\,$t\ge0$.

To prove~\ref{item:hitting_zero} it is enough to show that
$
\tau \le v_{s,t}(\theta)\le v^\tau_{s,t}(\theta)
$
for all $\theta\ge \tau$ and all $(s,t)\in\Delta$.
The left inequality holds because $ v_{s,t}$ is non-decreasing on ${(0,\infty)}$ and fixes~$\tau$. Lemma~\ref{lem:comparison} implies the other inequality.
Thus, ${v_{s,t}(\infty)\to\tau}$ as ${t\to+\infty}$.
In view of formula~\eqref{EQ_finite-ext-time-probab}, it follows that ${\mathbb P^{(s,x)}[T_0^s<\infty]= e^{-x\tau}}$.\vspace{-.64ex}

\StepP{\ref{item:hitting_infty}} Now we set $\alpha_t:=-\phi(0,t)/(2\tau^2)$. Again ${A_\infty=+\infty}$. Arguing as above, one can show that
${v^\tau_{s,t}(\theta) \le v_{s,t}(\theta)\le \tau}$ for all ${\theta\in(0,\tau]}$ and all ${(s,t)\in\Delta}$.
It follows that $v_{s,t}(0)\to \tau$ as ${t\to+\infty}$. Thus, according to~\eqref{EQ_explosion-probab},
$
\mathbb P^{(s,x)}[T_\infty^s <\infty]=1- e^{-x\tau}.
$
\end{proof}

\section{Branching processes on discrete space}\label{sec3}
Several important results in Section~\ref{sec2} have analogs for time-inhomogeneous branching processes on $\N_0^*\coloneqq \N_0 \cup\{\infty\}$, where $\N_0=\{0,1,2,\dots\}$. We endow $\N_0^*$ with the natural topology to make it a compact space. Note then that $\{\infty\}$ is not an open subset, while for all other (finite) non-negative integers~$n$, the singletons $\{n\}$ are open subsets.
\subsection{Characterization of branching processes}
\,\,To begin with, a \textsl{branching process on~$\N_0^*\,$} is a Markov family with transition kernels\,$(\ell_{s,t})_{(s,t)\in\Delta}$ on~$\N_0^*$ satisfying the following conditions:
\begin{enumerate}[label=\rm(L\arabic*)]
\item\label{L1} $\ell_{s,s}(n,\cdot)= \delta_n(\cdot)$ for every $s \ge 0$ and $n \in \N_0^*$,
\item \label{CKL} $\ell_{s,t}\starM \ell_{t,u}=\ell_{s,u}$ for every $0 \le s \le t \le u$,

\item\label{L3} for every $n \in \N_0$ the map $\Delta \ni (s,t)\mapsto \ell_{s,t}(n,\cdot) \in \Pr(\N_0^*)$ is weakly continuous,
\item \label{eq:branchingL} $\ell_{s,t} (m, \cdot) \ast \ell_{s,t}(n, \cdot) = \ell_{s,t}(m+n,\cdot), \quad  (s,t)\in\Delta,~ m,n \in \N_0,  $   \quad (Branching property)
\item\label{L5} $\ell_{s,t}(0,\cdot)= \delta_0(\cdot)$ for every $(s,t)\in\Delta$,\nopagebreak
\item\label{L6} $\ell_{s,t}(\infty, \cdot) = \delta_\infty(\cdot)$ for every $(s,t)\in\Delta$.
\end{enumerate}
\begin{remark}\label{RM_equiv-def-DSbrancProc}
Condition \ref{eq:branchingL} can be rewritten as $\ell_{s,t}(n,\cdot) = \ell_{s,t}(1,\cdot)^{\ast n}$ for ${n\in\N}$. Also, \ref{L3} is equivalent to the continuity of $\Delta \ni (s,t)\mapsto \ell_{s,t}(n,\{m\}) \in [0,1]$ for every ${n,m \in \N_0}$. It is worth mentioning that the map $(s,t)\mapsto\ell_{s,t}(n, \{\infty\})$ is not required to be continuous.
\end{remark}

The following (sub)\,probability generating function plays the role that the Laplace exponent does for the continuous state case,
\begin{equation}\label{eq:laplace_D}
F_{s,t}(z) := \int_{\N_0} z^n\, \ell_{s,t}(1,\di n) = \sum_{n\ge0} p_{s,t}(n)z^n,  \qquad z \in \UD,
\end{equation}
where $p_{s,t}(n) := \ell_{s,t}(1,\{n\})$. Each $F_{s,t}$ either identically equals~$1$ or belongs to the class
\[
\PGF\coloneqq  \bigg\{\,{\textstyle\sum\limits_{n=0}^{\infty} p_n z^n\colon (\forall n \in \N_0)~ p_n\ge0,\, \textstyle\sum\limits_{n=0}^{\infty}p_n \le 1} \bigg\}\setminus\big\{F\equiv 1\big\},
\]
which is clearly a topologically closed subsemigroup of $\Hol(\UD, \UD)$.

\begin{remark}
In the classical Galton\,--\,Watson processes, the explosion is not allowed, i.e.\ $\ell_{s,t}(n,\{\infty\})=0$ for any ${n\in\N_0}$ and any~$(s,t)\in\Delta$. This corresponds to the subsemigroup~$\Pp$ of~$\PGF$ consisting of functions whose sum of the Taylor coefficients equals~$1$. Working in this setting, Goryainov \cite{Goryainov96R,Goryainov96} studied reverse evolution families of probability generating functions in~$\Pp$. An obvious convenience in considering $\PGF$ is that this semigroup is topologically closed in $\Hol(\UD,\UD)$. Note that this is not the case for $\Pp$. In fact, $\PGF$ is the topological closure of $\Pp$ in $\Hol(\UD,\UD)$.
\end{remark}

Similarly to the continuous-state case, there is a one-to-one correspondence between families of transition kernels of branching processes on~$\N_0^*$ and topological reverse evolution families contained in~$\PGF$.

\begin{theorem}\label{thm:REF_BP_D} Given a family $(\ell_{s,t})_{(s,t)\in\Delta}$ of transition kernels of a branching process on $\N_0^*$, $(F_{s,t})_{(s,t)\in\Delta}$ defined by \eqref{eq:laplace_D} forms a topological reverse evolution family contained in $\PGF$.
Conversely, given a topological reverse evolution family $(F_{s,t})_{(s,t)\in\Delta}$ contained in $\PGF$,  there exists a unique family of transition kernels $(\ell_{s,t})_{(s,t)\in\Delta}$ of a branching process on $\N_0^*$ such that~\eqref{eq:laplace_D} holds.
\end{theorem}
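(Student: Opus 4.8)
The plan is to transcribe, almost line by line, the three-step proof of Theorem~\ref{thm:REF_BP}, with probability generating functions replacing Laplace transforms and the topologically closed semigroup $\PGF$ replacing $\BF$. The role played there by the identity $\LpT{k_{s,t}(x,\cdot)}(\theta)=e^{-xv_{s,t}(\theta)}$, valid for all $x>0$, is played here by
\[
\sum_{m\in\N_0} z^m\,\ell_{s,t}(n,\{m\}) = F_{s,t}(z)^n,\qquad n\in\N_0,~z\in[0,1);
\]
just as the exponential form encodes the branching property in the continuous setting, this relation encodes the convolution structure $\ell_{s,t}(n,\cdot)=\ell_{s,t}(1,\cdot)^{\ast n}$ of \ref{eq:branchingL}, see Remark~\ref{RM_equiv-def-DSbrancProc}.

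First I would carry out the abstract dictionary step, assuming only that $(\ell_{s,t})$ is a family of transition kernels on $\N_0^*$ and $(F_{s,t})$ a family of power series with nonnegative coefficients of total mass at most $1$, related by the identity above. The goal is to show that the conditions (D1) $F_{s,s}=\id$, (D2) $F_{s,u}=F_{s,t}\circ F_{t,u}$ on $[0,1)$, and (D3) $(s,t)\mapsto F_{s,t}(z)$ continuous for every $z\in\UD$, are equivalent to \ref{L1}--\ref{L3}. Since a (sub)probability measure on $\N_0^*$ is determined by the masses $\ell_{s,t}(n,\{m\})$, $m\in\N_0$, and the defect at $\infty$ (which is $1-\lim_{z\uparrow1}F_{s,t}(z)^n$), the equivalences (D1)$\Leftrightarrow$\ref{L1} and (D2)$\Leftrightarrow$\ref{CKL} follow on taking $n=1$ after the Fubini-type computation
\[
\sum_{m} z^m\,(\ell_{s,t}\starM\ell_{t,u})(n,\{m\}) = \sum_{k\in\N_0}\ell_{s,t}(n,\{k\})\,F_{t,u}(z)^k = F_{s,t}\big(F_{t,u}(z)\big)^n,
\]
where the $k=\infty$ contribution vanishes by \ref{L6}. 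For (D3)$\Leftrightarrow$\ref{L3} I would note that $F_{s,t}(z)=\sum_m\ell_{s,t}(1,\{m\})z^m$ converges uniformly in $(s,t)$ for each fixed $z\in\UD$ (Weierstrass, as the coefficients are bounded by $1$), so that continuity of the coefficients --- which is \ref{L3} by Remark~\ref{RM_equiv-def-DSbrancProc} --- is equivalent to continuity of $(s,t)\mapsto F_{s,t}(z)$, the reverse implication using Cauchy's formula together with normality of $\Hol(\UD,\UD)$.

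For the forward direction I would then, as in Step~2 of Theorem~\ref{thm:REF_BP}, upgrade each $F_{s,t}$ to a genuine element of $\PGF$. The elementary observation replacing Remark~\ref{EQ_not-selfmap} is that for a power series $F$ with nonnegative coefficients $p_m$ of total mass at most $1$ one has $|F(z)|\le\sum_m p_m|z|^m\le\sum_m p_m\le1$ for $z\in\UD$, with the first of these inequalities strict unless $p_m=0$ for all $m\ge1$; hence $F$ is either a self-map of $\UD$ or $F\equiv1$. It then remains to exclude $F_{s_0,t_0}\equiv1$ for some $s_0<t_0$, which I would do by the minimality argument of Step~2: put $t_1\coloneqq\min\{t\in(s_0,t_0]\colon F_{s_0,t}\equiv1\}$ (the set is closed by (D3) and omits points near $s_0$ since $F_{s_0,s_0}=\id$), use $F_{s_0,t_1}=F_{s_0,t}\circ F_{t,t_1}$ together with the dichotomy and the identity principle to force $F_{t,t_1}\equiv1$ for all $t\in[s_0,t_1)$, and let $t\uparrow t_1$ in (D3) to reach $F_{t_1,t_1}\equiv1$, contradicting $F_{t_1,t_1}=\id$. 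With $(F_{s,t})\subset\PGF$ established, conditions (D1)--(D3) are precisely \ref{REF1}, \ref{REF2} and \ref{REF3''} (equivalent to \ref{REF3'} by normality), so $(F_{s,t})$ is a topological reverse evolution family contained in $\PGF$. The converse is bookkeeping built on this: given such a family, I would write $F_{s,t}(z)=\sum_m p_{s,t}(m)z^m$, define $\ell_{s,t}(1,\cdot)$ on $\N_0^*$ by the coefficients $p_{s,t}(m)$ and the defect $1-\sum_m p_{s,t}(m)$ at $\infty$, then set $\ell_{s,t}(n,\cdot)\coloneqq\ell_{s,t}(1,\cdot)^{\ast n}$, $\ell_{s,t}(0,\cdot)\coloneqq\delta_0$, $\ell_{s,t}(\infty,\cdot)\coloneqq\delta_\infty$; this makes \ref{eq:branchingL}, \ref{L5}, \ref{L6} hold by construction and restores the PGF relation, while measurability in the starting point is automatic on the countable space $\N_0^*$. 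Conditions \ref{L1}--\ref{L3} then follow from (D1)--(D3) via Step~1, and uniqueness is immediate because the PGF relation pins down every mass and every defect. I expect the only genuinely delicate point to be the exclusion of the constant map $1$ in the forward direction; everything else transcribes the corresponding part of Theorem~\ref{thm:REF_BP}, with the coefficient structure of $\PGF$ making the estimates more transparent than in the Bernstein-function case.
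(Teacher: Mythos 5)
Your proposal is correct and follows essentially the same route as the paper: the paper's proof also verifies \ref{REF1}--\ref{REF2} directly, obtains \ref{REF3'} from the coefficient bound $\bigl|F_{s,t}(z)-\sum_{n<N}p_{s,t}(n)z^n\bigr|\le|z|^N$ (your Weierstrass argument), excludes $F_{s,t}\equiv1$ by explicitly invoking the same minimality/dichotomy argument from Theorem~\ref{thm:REF_BP} that you transcribe, and recovers the kernels in the converse from the Taylor coefficients $F_{s,t}^{(m)}(0)/m!$ exactly as you do. One cosmetic slip: in your dichotomy it is the \emph{second} inequality $\sum_m p_m|z|^m\le\sum_m p_m$ that is strict unless $p_m=0$ for all $m\ge1$, not the first, but the conclusion is unaffected.
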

\begin{proof}
Suppose we are given a family $(\ell_{s,t})_{(s,t)\in\Delta}$ of transition kernels of a branching process on~$\N_0^*$. Then it is straightforward to check that the functions $F_{s,t}$ regarded as self-maps of~$\UD$ satisfy the conditions \ref{REF1} and~\ref{REF2} in Definition~\ref{DF_REF}.

Since the Taylor coefficients $p_{s,t}(n)$ of $F_{s,t}$  satisfy ${\sum_{n\ge0}|p_{s,t}(n)|\le1}$,  we have $$\Big|F_{s,t}(z)-\sum_{n=0}^{N-1}p_{s,t}(n)z^n\Big|\le |z|^N\quad\text{for all~$\,z\in\UD~$ and\, all~$\,N\in\Natural$}.$$
Combined with the fact that by~\ref{L3}, $(s,t)\mapsto p_{s,t}(n)$ is continuous for each ${n\in\N_0}$,
this implies the continuity of the map $\Delta\ni(s,t)\mapsto F_{s,t}\in\Hol(\UD,\C)$.

Using now the continuity of $F_{s,t}$ w.r.t. the parameters and following the argument of \textsc{Step~2} in the proof of Theorem~\ref{thm:REF_BP}, we see that $F_{s,t}\subset\Hol(\UD,\UD)$ for any ${(s,t)\in\Delta}$. Thus, $(F_{s,t})$ is a topological reverse evolution family contained in~$\PGF$.

Now suppose we are given a topological reverse evolution family ${(F_{s,t})\subset\PGF}$.
For each ${(s,t)\in\Delta}$, the following relations define a unique transition kernel $\ell_{s,t}$ on $\N_0^*$:
\begin{align*}
&\ell_{s,t}(0,\cdot):=\delta_{0}(\cdot), & &\ell_{s,t}(1,\{m\}):=F_{s,t}^{(m)}(0)/m!\quad\text{for any~$m\in\N_0$,}\\
&\ell_{s,t}(\infty,\cdot):=\delta_{\infty}(\cdot), & &\ell_{s,t}(n,\cdot):=\ell(1,\cdot)^{*n} \quad\text{for any~$n\in\N$}.
\end{align*}
Since $(s,t)\mapsto F_{s,t}\in\Hol(\UD,\UD)$ is continuous, the maps $(s,t)\mapsto F_{s,t}^{(m)}(0)$, ${m\in\N_0}$, are continuous as well. It follows that $(\ell_{s,t})$ satisfies the condition~\ref{L3}, see Remark~\ref{RM_equiv-def-DSbrancProc}.
Checking that the rest of the conditions \ref{L1}\,--\,\ref{L6} hold is straightforward and therefore omitted.
\end{proof}

\subsection{An ODE governing probability generating functions}
According to  \cite[Theorem~30]{Goryainov-survey},  the set $\Gen(\PGF)$ of infinitesimal generators of one-parameter semigroups in $\PGF$ consists of functions
\begin{equation}\label{EQ_PFG-gen}
\Phi(z)\, =\, c \bigg[z-  b_0 -\sum\nolimits_{n\ge2} b_n z^n \bigg], \qquad z \in \UD,
\end{equation}
where $c \ge0$ and $b_0, b_n \ge0$ are such that $b_0 + \sum_{n\ge2}b_n \le 1$. This can be reparametrized into
\begin{equation}\label{EQ_inf-gen-our-param}
\Phi(z)\, =\, q z\, + \sum_{n\in \N_0\setminus\{1\}} \alpha(n) (z-z^n),
\end{equation}
where $q, \alpha(n) \ge 0$ for all $n\in \N_0\setminus\{1\}$ with $\sum_{n\ge2} \alpha(n) <\infty$.  The pair $(q, \alpha)$, where  $\alpha\coloneqq\{\alpha(n)\}_{n\in \N_0\setminus\{1\}}$, is called the \textsl{generating pair} of~$\Phi$.

We say that a family of generating pairs $(q_t,\alpha_t)_{t\ge0}$ is a \textsl{generating family} if the function ${t\mapsto \alpha_t(n)}$ is measurable for each $n\in \N_0\setminus\{1\}$ and if the functions $t\mapsto q_t$ and $t\mapsto \sum_{n\in \N_0\setminus\{1\}}\alpha_t(n)$ are~in~$\Lloc$.

\begin{theorem}\label{thm:main_BP_D}
Let $(F_{s,t})_{(s,t)\in\Delta}$ be an absolutely continuous reverse evolution family contained in $\PGF$.
Then there exists a unique generating family $(q_t,\alpha_t)_{t\ge0}$ such that for every $t \ge0$ and $z \in \UD$ the map $s \mapsto F(s)\coloneqq F_{s,t}(z)$ is the unique solution to the initial value problem
\begin{equation}\label{eq:ODE_D}
\frac{\di }{\di s} F(s)= \Phi(F(s),s)  \quad \text{a.e.~}s \in [0,t]; \qquad F(t)=z,
\end{equation}
where $\Phi\colon \UD\times [0,\infty)\to\C$ is defined by
\begin{equation}\label{eq:BF_generator_D}
\Phi(z,s) =q_s z + \sum_{n\in \N_0\setminus\{1\}} \alpha_s(n)(z-z^n), \qquad z \in \UD, ~ s\ge0,
\end{equation}
i.e.\ the Herglotz vector field associated with $(F_{s,t})_{(s,t)\in\Delta}$ is of the form \eqref{eq:BF_generator_D}.

Conversely, given a generating family $(q_t,\alpha_t)_{t\ge0}$,  the function $\Phi$ defined by \eqref{eq:BF_generator_D} is a Herglotz vector field, and for every $z \in \UD$ and $t\ge0$, the initial value problem \eqref{eq:ODE_D} has a unique solution $s\mapsto F(s)=F(s;t,z)$.  Moreover, the functions $F_{s,t}(z)\coloneqq F(s;t,z)$ form an absolutely continuous reverse evolution family $(F_{s,t})_{(s,t)\in\Delta}\subset \PGF$.
\end{theorem}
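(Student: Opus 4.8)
The plan is to mirror the proof of Theorem~\ref{thm:main_BP}, replacing Silverstein's representation by Goryainov's representation~\eqref{EQ_PFG-gen} of $\Gen(\PGF)$ and replacing Proposition~\ref{prop:int} by a discrete counterpart. Since $\PGF$ is a topologically closed subsemigroup of $\Hol(\UD,\UD)$, Theorem~\ref{TH_mainBCM1} associates to the given absolutely continuous reverse evolution family $(F_{s,t})$ a unique Herglotz vector field $\Phi$ on $\UD$, and Theorem~\ref{TH_EvolFam-HVF} tells us that $(F_{s,t})\subset\PGF$ holds if and only if $\Phi(\cdot,t)\in\Gen(\PGF)$ for a.e.\ $t\ge0$. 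By~\eqref{EQ_PFG-gen} and the reparameterization following it, this last condition means precisely that, for a.e.\ $t$, the map $\Phi(\cdot,t)$ is of the form~\eqref{eq:BF_generator_D} with a generating pair $(q_t,\alpha_t)$; the uniqueness of the generating pair of an element of $\Gen(\PGF)$ then yields uniqueness of $(q_t,\alpha_t)$ up to a Lebesgue null set on the $t$-axis.

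The heart of the matter, which I would isolate as a lemma analogous to Proposition~\ref{prop:int}, is the equivalence: \emph{a function $\Phi$ of the form~\eqref{eq:BF_generator_D} with $q_t,\alpha_t(n)\ge0$ and $\sum_{n}\alpha_t(n)<\infty$ for each $t$ is a Herglotz vector field if and only if $(q_t,\alpha_t)_{t\ge0}$ is a generating family.} For the sufficiency direction, measurability of $t\mapsto\Phi(z,t)$ for fixed $z\in\UD$ follows at once from termwise measurability and summability, while the elementary estimate $|z-z^{n}|\le|z|+|z|^{n}\le2$, valid for all $|z|\le1$ and all $n\in\N_0\setminus\{1\}$, gives the uniform bound $\sup_{|z|\le r}|\Phi(z,t)|\le q_{t}+2\sum_{n\in\N_0\setminus\{1\}}\alpha_{t}(n)$, which is an $\Lloc$-majorant by the definition of a generating family. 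This verifies \ref{HVF1}--\ref{HVF3}.

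For the necessity direction I would recover the coefficients from the Taylor expansion of $\Phi(\cdot,t)$ at the origin. Reading off~\eqref{eq:BF_generator_D} one finds $\Phi(0,t)=-\alpha_t(0)$, $\Phi^{(n)}(0,t)/n!=-\alpha_t(n)$ for $n\ge2$, and $\Phi'(0,t)=q_t+\sum_{n\in\N_0\setminus\{1\}}\alpha_t(n)$. Exactly as in Step~3 of the proof of Proposition~\ref{prop:int}, each $z$-derivative $\Phi^{(n)}(0,\cdot)$ is measurable in $t$, being a pointwise limit of Riemann sums of the measurable functions $t\mapsto\Phi(z,t)$ coming from Cauchy's integral formula; hence every $t\mapsto\alpha_t(n)$ is measurable, so $t\mapsto\sum_{n}\alpha_t(n)$ is measurable and therefore so is $t\mapsto q_t$. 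Condition~\ref{HVF3} applied on a disc $\{|z|\le r\}$ together with Cauchy's estimate bounds $\Phi'(0,\cdot)$ by an $\Lloc$-function, and since $q_t\ge0$ and $\sum_{n}\alpha_t(n)\ge0$ are both nonnegative summands of $\Phi'(0,t)$, each is dominated by this majorant and hence lies in $\Lloc$. This is precisely the defining property of a generating family.

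With this lemma in hand, the two assertions of the theorem follow by assembling Theorems~\ref{TH_mainBCM1} and~\ref{TH_EvolFam-HVF} in the same way as in the proof of Theorem~\ref{thm:main_BP}. I expect the only genuinely delicate point to be the measurability bookkeeping in the necessity direction---confirming that the $z$-derivatives at the origin depend measurably on $t$, and then exploiting nonnegativity to split $\Phi'(0,t)$ into its two $\Lloc$ pieces $q_t$ and $\sum_{n}\alpha_t(n)$. The analytic estimates themselves are elementary, so the discrete case is in fact somewhat simpler than the continuous one treated in Proposition~\ref{prop:int}, where separating the drift coefficient from the L\'evy measure required more care.
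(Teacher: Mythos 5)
Your proposal is correct and follows essentially the same route as the paper's own proof: invoke Theorems~\ref{TH_mainBCM1} and~\ref{TH_EvolFam-HVF} for the topologically closed semigroup $\PGF\subset\Hol(\UD,\UD)$, and replace Proposition~\ref{prop:int} by the discrete equivalence ``$\Phi$ of the form~\eqref{eq:BF_generator_D} is a Herglotz vector field iff $(q_t,\alpha_t)$ is a generating family,'' proved via Cauchy's integral formula and the Taylor coefficients at the origin in one direction and the elementary bound $\sup_{z\in\UD}|\Phi(z,t)|\le q_t+2\sum_{n}\alpha_t(n)$ in the other. The only difference is that you spell out details (the measurability bookkeeping and the nonnegativity splitting of $\Phi'(0,t)$) that the paper leaves as ``a straightforward argument.''
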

\begin{proof} The proof is similar to that of Theorem~\ref{thm:main_BP}; one may use Theorem~\ref{TH_mainBCM1} and \cite[Theorem~2]{GHP}  for the topologically closed semigroup $\PGF\subset\Hol(\UD,\UD)$. One only needs to replace Proposition~\ref{prop:int} with the following argument.

If $\Phi(z,t)$ defined by \eqref{eq:BF_generator_D} is a Herglotz vector field, i.e.\ if $\Phi(z,t)$ is measurable in~$t$ for each $z\in\UD$ and if $t\mapsto\sup_{z\in K}|\Phi(z,t)|$ is locally integrable for every compact $K \subset \UD$, then so does $t\mapsto \Phi^{(n)}(0,t)$ for each ${n\in\N_0}$ thanks to Cauchy's integral formula. Therefore, $t\mapsto\alpha_t(n)=-\Phi^{(n)}(0,t)/n!$ is measurable for each ${n\in\N_0\setminus\{1\}}$, and moreover, since $\Phi'(0,t)= q_t + \sum_{n\ge0, n\ne1} \alpha_t(n)$, the functions $t\mapsto q_t$ and $t\mapsto \sum_{n\in \N_0\setminus\{1\}}\alpha_t(n)$ are locally integrable. Thus, $(q_t,\alpha_t)$ is a generating family.

Conversely, if $(q_t,\alpha_t)_{t\ge0}$ is a generating family, then a straightforward argument implies that  $\Phi(z,t)$ defined by \eqref{eq:BF_generator_D} is measurable in~$t$ for each $z\in\UD$ and that
$t\mapsto \sup_{z\in \UD}|\Phi(z,t)|$ is locally integrable. Thus, $\Phi$ is a Herglotz vector field.
\end{proof}

\subsection{Fixed points, explosion and extinction}
Our next task is to characterize absolutely continuous reverse evolution families with a common boundary regular fixed point and with the common Denjoy\,--\,Wolff point\footnote{For the precise definitions of a boundary fixed point and of the Denjoy\,--\,Wolff point, see e.g.\ Appendix~\ref{App-Fixed}\\ or~\cite[Sect.\,2.3]{GHP}}, boundary or interior. Similarly to the continuous-state case, presence of a common fixed point turns out to be closely related to explosion and extinction probabilities.

As usual, we adopt the following notation: for $n \in \N_0$, $F\in \PGF$ and $\Phi \in \Gen(\PGF)$, let
\[
F^{(n)}(1)\coloneqq  \lim_{\R\ni \xx \to 1-0} F^{(n)}(\xx) \in [0,\infty] \quad \text{and} \quad \Phi^{(n)}(1)\coloneqq  \lim_{\R\ni \xx \to 1-0} \Phi^{(n)}(\xx) \in [-\infty,\infty).
\] The above limits exist because of monotonicity. Note that $F(1)\in [0,1]$ and $\Phi(1) \in [0,\infty)$.  We denote by $\PGF_\sigma$ the set of maps in $\PGF$ having the Denjoy\,--\,Wolff point at~$\sigma$.

Let $(N_t)$ be a branching process on $\N_0^\ast$ having an absolutely continuous reverse evolution family $(F_{s,t})$ and the associated family of transition kernels $(\ell_{s,t})$.
Let
\[
S_r:= \inf\{t\ge0: N_t=r\} \qquad \text{for} \qquad r \in \{0,\infty\}.
\]
The Branching property implies the basic relation
\[
\sum\nolimits_{k\ge0} \ell_{s,t}(n,\{k\})z^k = F_{s,t}(z)^n,\quad z\in\UD,
\]
which further implies
\begin{align*}
\qquad\mathbb P^{(s,n)} [N_t = 0] &= \ell_{s,t}(n,\{0\}) = F_{s,t}(0)^n\\
\text{~\,~and}\qquad\qquad\qquad\qquad\qquad\mathbb P^{(s,n)} [N_t = \infty] &= \ell_{s,t}(n,\{\infty\}) = 1- F_{s,t}(1)^n.
\qquad\qquad\qquad\qquad~
\end{align*}
Because of the monotonicity of the sets $\{N_t =r\}$ w.r.t.\ $t\ge0$, it holds that $$\{S_r <\infty\}~=~ \bigcup\nolimits_{t>0} \{N_t =r\}~=~\bigcup\nolimits_{p \in \N} \{N_p =r\}$$ and hence, for all $n\in \N$ and $s\ge0$, we have
\begin{equation*}
\mathbb P^{(s,n)} [S_0 < \infty] = \lim_{t\to\infty} F_{s,t}(0)^n \quad \text{and} \quad \mathbb P^{(s,n)} [S_\infty < \infty] = \lim_{t\to\infty} 1- F_{s,t}(1)^n.
\end{equation*}

\addtocontents{toc}{\SkipTocEntry}\subsubsection{Finite first moments and boundary regular fixed point at $1$}
Here we establish a characterization of branching processes on $\N_0^*$ with finite first moments analogous to Theorem~\ref{thm:first_moment}. Note that having finite first moments is a sufficient condition for no explosion: $\mathbb P^{(s,n)} [S_\infty < \infty]=0$ for any $n \in \N$ and $s\ge0$.

\begin{theorem}\label{thm:first_moment_D}
Let $(N_t)_{t\ge0}$ be a branching process whose probability generating functions $(F_{s,t})_{(s,t)\in \Delta}$ are characterized by a Herglotz vector field $\Phi$ as in Theorem~\ref{thm:main_BP_D}.
The following conditions are equivalent:
\begin{enumerate}[label=\rm(\arabic*)]

\item\label{first_moment0_D} $\mathbb E^{(s,n)}[N_t]<\infty$ for all $n \in \N$ and $(s,t)\in \Delta$.

\item\label{first_moment1_D} $\mathbb E^{(0,1)}[N_t]<\infty$ for all $t \ge 0$.

\item \label{first_moment1.5_D} $F_{0,t}(1)=1$ and $F'_{0,t}(1)<\infty$, i.e.\ $1$ is a BRFP of $F_{0,t}$ for all $t\ge 0$.

\item \label{first_moment2_D} $F_{s,t}(1)=1$ and $F'_{s,t}(1)<\infty$, i.e.\ $1$ is a BRFP of $F_{s,t}$ for all $(s,t)\in\Delta$.

\item\label{first_moment3_D} $\Phi(1,t)=0$ for a.e.\ $t\ge0$, and $t\mapsto \Phi'(1,t)$ is in $\Lloc$.

\item \label{first_moment4_D} $q_t=0$ for a.e.\ $t\ge0$ and $t\mapsto \sum_{n\in \N_0\setminus\{1\}} (n+1)\alpha_t(n)$ is in $\Lloc$.

\end{enumerate}
If one and hence all of the above conditions hold, then
\[
\mathbb E^{(s,n)}[N_t] = n F_{s,t}'(1)= n\exp \left(- \int_s^t \Phi'(1,r) \,\di r \right)
\]
 for all $n \in \N$ and $(s,t) \in \Delta$.
\end{theorem}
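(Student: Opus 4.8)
The plan is to transcribe the proof of Theorem~\ref{thm:first_moment} almost verbatim, transporting everything from the half-plane picture (Laplace exponents $v_{s,t}$ on $\UH$, boundary fixed point at $0$) to the disk picture (generating functions $F_{s,t}$ on $\UD$, boundary fixed point at $1$). First I would establish the discrete analogue of~\eqref{eq:derivative_BP}. By the branching property~\ref{eq:branchingL} (equivalently $\ell_{s,t}(n,\cdot)=\ell_{s,t}(1,\cdot)^{\ast n}$), for $z\in(0,1)$ one has $\mathbb E^{(s,n)}\big[z^{N_t}\ind_{\{N_t<\infty\}}\big]=F_{s,t}(z)^n$; differentiating in $z$ and letting $z\to1^-$, the monotone convergence theorem yields
\[
\mathbb P^{(s,n)}[N_t<\infty]=F_{s,t}(1)^n,\qquad \mathbb E^{(s,n)}\big[N_t\ind_{\{N_t<\infty\}}\big]=n\,F_{s,t}(1)^{n-1}F'_{s,t}(1).
\]
Hence $\mathbb E^{(s,n)}[N_t]<\infty$ holds precisely when $F_{s,t}(1)=1$ and $F'_{s,t}(1)<\infty$, and this finiteness does not depend on $n\in\N$. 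Specialising gives directly \ref{first_moment0_D}$\Leftrightarrow$\ref{first_moment2_D} (all $n$, all $(s,t)$) and \ref{first_moment1_D}$\Leftrightarrow$\ref{first_moment1.5_D} ($n=1$, $s=0$), as well as the first equality $\mathbb E^{(s,n)}[N_t]=nF'_{s,t}(1)$ in the final display once $F_{s,t}(1)=1$ is known.

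The bridge \ref{first_moment1.5_D}$\Leftrightarrow$\ref{first_moment2_D} is where I would reproduce the self-map argument of Theorem~\ref{thm:first_moment}. The implication $\Leftarrow$ is trivial (put $s=0$). For $\Rightarrow$, assume \ref{first_moment1.5_D}, i.e.\ $F_{0,u}(1)=1$ for all $u$. If $F_{s,t}(1)=:w<1$ for some $(s,t)$, then passing to $z\to1^-$ in $F_{0,t}=F_{0,s}\circ F_{s,t}$ gives $1=F_{0,t}(1)=F_{0,s}(w)$; but under \ref{first_moment1.5_D} the map $F_{0,s}\in\PGF$ is non-constant (a constant $<1$ would force $F_{0,s}(1)<1$, and the constant $1$ is excluded from $\PGF$), hence strictly increasing on $[0,1]$, so $F_{0,s}(w)<F_{0,s}(1)\le1$, a contradiction. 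Thus $F_{s,t}(1)=1$ for all $(s,t)$. Differentiating $F_{0,t}=F_{0,s}\circ F_{s,t}$ and letting $z\to1^-$ then yields $F'_{0,t}(1)=F'_{0,s}(1)\,F'_{s,t}(1)$, and since the angular derivative of a self-map of $\UD$ at a finite boundary fixed point never vanishes (see \cite[Proposition~1.9.3 on p.\,54]{BCD-Book}), $F'_{0,s}(1)>0$, whence $F'_{s,t}(1)=F'_{0,t}(1)/F'_{0,s}(1)<\infty$.

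For \ref{first_moment2_D}$\Leftrightarrow$\ref{first_moment3_D} I would invoke \cite[Theorem~1.1]{BRFP2015} applied directly in $\UD$ at the boundary regular fixed point $1$, after passing from the reverse evolution family to the associated evolution family as in \cite[Remark~2.8]{GHP}; the spectral-function formula of that theorem also gives $F'_{s,t}(1)=\exp\big(-\int_s^t\Phi'(1,r)\,\di r\big)$, which is the second equality in the final display. The equivalence \ref{first_moment3_D}$\Leftrightarrow$\ref{first_moment4_D} is then a direct computation from~\eqref{eq:BF_generator_D}: one finds $\Phi(1,t)=q_t$, so $\Phi(1,\cdot)=0$ a.e.\ iff $q_t=0$ a.e., and
\[
\Phi'(1,t)=q_t+\sum_{n\in\N_0\setminus\{1\}}\alpha_t(n)(1-n)=q_t+\alpha_t(0)-\sum_{n\ge2}(n-1)\alpha_t(n).
\]
Since $t\mapsto q_t$, $t\mapsto\alpha_t(0)$ and $t\mapsto\sum_{n\ne1}\alpha_t(n)$ are already in $\Lloc$ by the definition of a generating family, $\Phi'(1,\cdot)\in\Lloc$ is equivalent to $\sum_{n\ge2}(n-1)\alpha_t(n)\in\Lloc$, and, adding the locally integrable $2\sum_{n\ge2}\alpha_t(n)$ and the term $\alpha_t(0)$, to $t\mapsto\sum_{n\ne1}(n+1)\alpha_t(n)\in\Lloc$, which is \ref{first_moment4_D}.

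I expect the only genuinely non-formal ingredient to be the correct invocation of the boundary-regular-fixed-point theory \cite{BRFP2015} in the form needed for \emph{reverse} families (via \cite[Remark~2.8]{GHP}), together with the mild care in the bridge step about $F_{0,s}$ being non-constant; everything else is a faithful disk-side transcription of the half-plane proof with the fixed point relocated from $0$ to $1$.
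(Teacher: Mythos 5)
Your proposal is correct and coincides with the paper's intended argument: the paper omits the proof entirely, stating only that it is ``very similar'' to that of Theorem~\ref{thm:first_moment} with \eqref{eq:derivative_BP} replaced by the relations $F_{s,t}(1)=\mathbb P^{(s,1)}[N_t<\infty]$ and $F_{s,t}'(1)=\mathbb E^{(s,1)}[N_t]$, and your text is precisely that transcription (probabilistic identities via the branching property and monotone convergence, the composition/angular-derivative bridge between \ref{first_moment1.5_D} and \ref{first_moment2_D}, the appeal to \cite[Theorem~1.1]{BRFP2015} via \cite[Remark~2.8]{GHP}, and the coefficient computation for \ref{first_moment3_D}$\Leftrightarrow$\ref{first_moment4_D}). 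If anything, your version is slightly more careful than the paper's hint, since you work with $\mathbb E^{(s,n)}[N_t\ind_{\{N_t<\infty\}}]$ so that the identities remain valid even when $F_{s,t}(1)<1$.
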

\begin{proof}
The proof is very similar to that of Theorem~\ref{thm:first_moment} and hence it is omitted. We would only mention that instead of formulas~\eqref{eq:derivative_BP} one has to use the relations $F_{s,t}(1) = \mathbb P^{(s,1)} [N_t <\infty]$ and $F_{s,t}'(1)= \mathbb E^{(s,1)}[N_t]$.
\end{proof}

\addtocontents{toc}{\SkipTocEntry}\subsubsection{The Denjoy\,--\,Wolff point at $1$}
Reverse evolution families in $\PGF$ with the common DW-point at~$1$ are characterized here.
The homogeneous case was treated in \cite[Theorem~32]{Goryainov-survey}. We select an equivalent but different form.

\begin{theorem}[\cite{Goryainov-survey}]\label{thm:DW_1_discrete} The set $\Gen(\PGF_1)$ consists of the functions of the form
\begin{equation} \label{eq:DW_1_discrete}
\Phi(z) = \sum_{n\ge0,\, n\ne1}\alpha(n) (z- z^n), \qquad z \in \UD,
\end{equation}
where $\alpha(n)\ge0$ for all~$n\ge2$ and
$
 \alpha(0)\ge \sum_{n\ge2} (n-1) \alpha(n).
$
\end{theorem}
The above theorem follows easily from the representation~\eqref{EQ_inf-gen-our-param}, if we recall that according to \cite[Theorem~3.2]{CD_RACSAM}, ${\sigma=1}$ is the DW-point of a one-parameter semigroup generated by~$\Phi$ if and only if ${\Phi(1)=0}$ and ${\Phi'(1)\ge0}$.

Note also that in the non-autonomous setting, a Herglotz vector field $\Phi$  generates a (reverse) evolution family with a common DW-point at~$\sigma$ if and only if for a.e. ${t\ge0}$, the DW-point of the one-parameter semigroup generated by $\Phi(\cdot,t)$ coincides with~$\sigma$; see e.g.~\cite{BCM1}. Therefore,  Theorems~\ref{thm:main_BP_D} and~\ref{thm:DW_1_discrete}  immediately lead to the following corollary.

\begin{corollary}\label{cor:boundaryDW-D} A function $\Phi:\UD\times[0,\infty)\to\C$ is a Herglotz vector field with associated reverse evolution family contained in $\PGF_1$  if and only if for all $z \in \UD$ and a.e.\,$t\ge0$,
\begin{equation}
\Phi(z,t) =  \sum_{n\ge0, n\ne1}\alpha_t(n) (z- z^n), \qquad z \in \UD,
\end{equation}
where $\alpha_t(n) \ge 0$ for each $t\ge0$ and $n \in \N_0\setminus\{1\}$ such that $t\mapsto \alpha_t(n)$ are all measurable,  $t\mapsto \alpha_t(0)$ is locally integrable and
\begin{equation}
 \alpha_t(0)\ge \sum_{n\ge2} (n-1) \alpha_t(n), \qquad t\ge0.
\end{equation}
\end{corollary}

\addtocontents{toc}{\SkipTocEntry}\subsubsection{Fixed points in $[0,1)$}
Recall that a fixed point  inside the domain of a holomorphic self-map, different from the identity map, is automatically its DW-point. For this reason, a fixed point in $\UD$ of a map in $\PGF$ belongs to its invariant subset $[0,1)$.

The following fact is proved in \cite[Theorem~31]{Goryainov-survey}. We select an equivalent but different form.
For $n\in\Natural$ and $z,w\in\C$, denote
$$
  P^*_n(z,w):=\sum_{k=0}^{n-1}z^kw^{n-k-1}=
  \begin{cases}
    (z^n-w^n)/(z-w) &\text{if~$z\neq w$,}\\
    {}~nw^{n-1} &            \text{if~$z=w$}.
  \end{cases}
$$

\begin{theorem}[\cite{Goryainov-survey}]\label{thm:DW_finite_discrete} Suppose that $\sigma \in [0,1)$. The set $\Gen(\PGF_\sigma)$ consists of the functions of the form
\begin{equation} \label{eq:DW_finite_discrete2}
\Phi(z) = (z - \sigma) \Big[\beta- \sum_{n\ge2}\alpha(n) P^*_n(z,\sigma) \Big] , \qquad z \in \UD,
\end{equation}
where $\alpha(n)\ge0$ for all~${n\ge2}$ and $\beta\ge\sum_{n\ge2} P^*_n(1,\sigma)\alpha(n)$.
\end{theorem}
\begin{remark}
With some computations, one can see that the generating pair $(q,\alpha)$ of $\Phi$ in the above theorem is given by
\begin{equation*}
q = \Phi(1) = (1-\sigma) \Big[\beta - \sum_{n\ge2} P^*_n(1,\sigma)\alpha(n) \Big] \qquad \text{and} \qquad \alpha(0) = \sigma \beta - \sum_{n\ge2} \alpha(n)\sigma^n.
\end{equation*}
The parameters $\alpha(n)$ for $n\ge2$ in~\eqref{EQ_inf-gen-our-param} and in~\eqref{eq:DW_finite_discrete2} are the same.
\end{remark}

Theorems~\ref{thm:main_BP_D} and~\ref{thm:DW_finite_discrete} imply the following
\begin{corollary}\label{cor:DW_finite_discrete}
Let $\sigma\in[0,1)$. A function $\Phi:\UD\times[0,\infty)\to\C$ is a Herglotz vector field with associated reverse evolution family contained in $\PGF_\sigma$  if and only if for all $z \in \UD$ and a.e.\,$t\ge0$,
\begin{equation}  \label{eq:DW_finite_discrete}
\Phi(z,t) = (z - \sigma) \Big[\beta_t - \sum_{n\ge2}\alpha_t(n) P^*_n(z,\sigma) \Big],
\end{equation}
where  $t\mapsto\beta_t$ and $t\mapsto\alpha_t(n)$, $n\ge2$, are non-negative real-valued functions such that:
\begin{itemize}
\item[\rm (a)] $t\mapsto \alpha_t(n)$, $n\ge2$, are measurable;\vskip.3ex
\item[\rm (b)] $t\mapsto\beta_t$ is $\Lloc$;\vskip.3ex
\item[\rm (c)] $\sum\nolimits_{n\ge2} P_n^*(1,\sigma)\alpha_t(n)\,\le\,\beta_t\,\,$ for all $t\ge0$.
\end{itemize}
\end{corollary}

\noindent We conclude the subsection with an analog of Theorem~\ref{TH_infty-to-tau} in the discrete state space setting.

\begin{theorem}\label{TH_infty-to-sigma}
Let $\sigma \in [0,1)$ and $(F_{s,t})$ be an absolutely continuous reverse evolution family in~$\PGF_\sigma$ with the associated Herglotz vector field~$\Phi$ and the associated branching process $(N_t)$ on~$\N_0^\ast$.
\begin{enumerate}[label=\rm(\roman*)]
\item\label{item:hitting_zero_discrete}
Suppose that
\begin{equation}\label{EQ-Phi}
\int_0^{+\infty}\Phi'(\sigma,t)\,\di t=+\infty.
\end{equation}
Then for any $s\ge0$,
\begin{equation}\label{EQ_conv-to-sigma}
\lim\limits_{t\to\infty} F_{s,t}(0) =\sigma
\end{equation}
 and hence we have $\mathbb P^{(s,n)}[S_0<\infty]=\sigma^n$ for any $n \in \N$ and any~${s\ge0}$. Moreover, if $\sigma\neq0$, then \eqref{EQ_conv-to-sigma} implies~\eqref{EQ-Phi}.\medskip

\item\label{item:hitting_infty_discrete}
Suppose that
\begin{equation}\label{EQ_int-Phi-1}
\int_0^{+\infty}\Phi(1,t)\,\di t= \infty.
\end{equation}
Then for any $s\ge0$, $$\lim\limits_{t\to\infty} F_{s,t}(1) =\sigma$$ and hence we have $\mathbb P^{(s,n)}[S_\infty<\infty]=1- \sigma^n$ for any $n \in \N$ and any~${s\ge0}$.
\end{enumerate}
\end{theorem}
\begin{proof}
Assertion~\ref{item:hitting_zero_discrete} is immediate if~$\sigma=0$. So we assume that $\sigma\in(0,1)$. Fix an arbitrary~${s\ge0}$.
Notice that every $F_{s,t}$, ${t\ge s}$, is an increasing self-map of~$(-1,1)$ fixing~$\sigma$. By the invariant form of the Schwarz\,--\,Pick Lemma, see e.g. \cite[Theorem~6.4]{BM2007}, we have ${|F_{u,t}(x)-\sigma|\le|x-\sigma|}$ for any ${x\in(-1,1)}$ and any~$(u,t)\in\Delta$. It follows that ${t\mapsto F_{s,t}(x)}$ is monotonic and hence has a limit as $t\to\infty$ pointwise w.r.t. ${x\in(-1,1)}$. Since holomorphic self-maps of~$\UD$ form a normal family and taking into account that each $F_{s,t}$ fixes the same point ${\sigma\in\UD}$, it further follows, see e.g. \cite[\S\,II.7]{Goluzin}, that $F_{s,t}$ converges, as ${t\to\infty}$, locally uniformly in~$\UD$ to some $F_{s,\infty}\in\Hol(\UD,\UD)$. Moreover, by the corollary of Hurwitz's Theorem on univalent functions, see e.g. \cite[p.\,5]{Duren}, the limit function is either univalent in~$\UD$, or $F_{s,\infty}\equiv\sigma$. Since ${F'_{s,t}(\sigma)=\exp\big(-\int_s^t\Phi'(\sigma,r)\di r\big)\to F'_{s,\infty}(\sigma)}$ as ${t\to\infty}$, we see that the limit function is constant and, in particular $F_{s,\infty}(0)=\sigma$, if and only if~\eqref{EQ-Phi} holds. Otherwise, ${F_{s,\infty}(0)\neq F_{s,\infty}(\sigma)=\sigma}$ by univalence.  This proves~\ref{item:hitting_infty_discrete}.

The proof of~\ref{item:hitting_infty_discrete} is similar to that of Theorems~\ref{TH_infty-to-zero} and~\ref{TH_infty-to-tau}.  For ${t\ge0}$ and ${z\in\UD}$, let
\[
 p_t := \frac{\Phi(1,t)}{1-\sigma}\quad\text{and}\quad \Phi_0(z,t):=(z-\sigma)p_t=(z-\sigma)\Big[\beta_t -  \sum_{n\ge2} P_n^*(1,\sigma)\alpha_t(n)\Big].
\]
It follows from conditions (a)\,--\,(c) in Corollary~\ref{cor:DW_finite_discrete}, that ${p_t\ge0}$ for a.e.~${t\ge0}$ and that ${t\mapsto p_t}$ is of class~$\Lloc$. Therefore, again by Corollary~\ref{cor:DW_finite_discrete} with $\beta_t$ and $\alpha_t(n)$'s replaced by $p_t$ and~$0$, respectively, $\Phi_0$ is also a Herglotz vector field in~$\UD$ and the associated reverse evolution family~$(F_{s,t}^0)$ is contained in~$\PGF_\sigma$. In fact, $F^0_{s,t}(z) = \sigma + (z-\sigma) e^{- \int_s^t p_r \,\di r}$ for any ${(s,t)\in\Delta}$ and any $z\in\UD$.

Taking into account that $P_n^*(x,\sigma)\le P_n^*(1,\sigma)$ for any ${x\le1}$ and all~$n\ge2$, representation~\eqref{eq:DW_finite_discrete} implies that $\Phi(x,t)\ge\Phi_0(x,t)$ for a.e.~${t\ge0}$ and all~$x\in[\sigma,1)$. Thus, arguing as in the proof of Theorem~\ref{TH_infty-to-tau}\,(i), we see that ${\sigma\le F_{s,t}(x)\le F_{s,t}^0(x)}$ for any ${(s,t)\in\Delta}$ and any $x\in[\sigma,1)$. Passing to the limit as ${x\to1-0}$ yields the desired conclusion, because
if \eqref{EQ_int-Phi-1} holds, then ${F^0_{s,t}(1)\to\sigma}$ as ${t\to\infty}$.
\end{proof}

\subsection{Spatial embeddability}
The notion of ``spatial embedding'' --- i.e.\ embedding of a given discrete-state branching process into a continuous-state one~--- was discussed in \cite[Section 4]{MJ1958} (where the term ``extension'' was used for spatial embedding). In our setting, this remarkable notion can be defined as follows.

\begin{definition}\label{def:spat-embd}
A branching process on $\N_0^*$ with transition kernels $(\ell_{s,t})_{(s,t)\in\Delta}$ is said to be \textsl{spatially embeddable} into a branching process on $[0,\infty]$ if there exists a branching process on $[0,\infty]$ with transition kernels $(k_{s,t})_{(s,t)\in\Delta}$ such that for all $n \in \N_0^*$, $(s,t)\in\Delta$,
\begin{equation}\label{EQ_embedd-cond}
k_{s,t}(n,B)=\ell_{s,t}(n,B\cap \N_0^*)\quad\text{for any Borel set ${B\subset[0,\infty]}$}.
\end{equation}
\end{definition}

Below we establish
a fairly simple
characterization
of spatial embeddability in terms of the generating functions, which is further equivalent to a sort of monotonicity of the process.

\begin{theorem}\label{thm:non-decreasingD}
Let $(N_t)_{t\ge0}$  be a branching process on~$\N_0^*$ with probability generating functions $(F_{s,t})_{(s,t)\in\Delta}$. The following conditions are equivalent:
\begin{enumerate}[label=\rm(\roman*)]
\item \label{item:E1} $(N_t)$ is spatially embeddable into a branching process on $[0,\infty]$;
\item\label{item:E2} $F_{s,t}(0)=0$ for any $(s,t)\in\Delta$, i.e. $(F_{s,t})\subset\PGF_0$;
\item\label{item:E3} $\ell_{s,t}(n, \{m\})=0$ for any $(s,t)\in\Delta$ and $m,n \in \N_0$ with $m<n$;
\item\label{item:E4} $\mathbb P^{(s,n)}[ N_t \le N_u]=1$ for any $0 \le s \le t \le u$ and $n \in \N_0$.
\end{enumerate}
Moreover, if $(N_t)_{t\ge0}$ has \cadlag paths, then the above conditions are further equivalent to that $(N_t)$ has non-decreasing paths almost surely.
\end{theorem}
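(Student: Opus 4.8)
The plan is to route everything through the change of variables $\zeta=-\log z$ (equivalently $z=e^{-\zeta}$), under which the probability generating functions and the Laplace exponents of a continuous-state process are tied together by
\[
e^{-v_{s,t}(\zeta)}=F_{s,t}(e^{-\zeta}),\qquad\text{i.e.}\qquad v_{s,t}(\zeta)=-\log F_{s,t}(e^{-\zeta}).
\]
Comparing \eqref{eq:laplace} with \eqref{eq:laplace_D}, a continuous-state process $(k_{s,t})$ realizes the embedding identity \eqref{EQ_embedd-cond} exactly when $k_{s,t}(1,\cdot)=\ell_{s,t}(1,\cdot)$ as measures on $\N_0^*$, which by the theory of subordinators (as used in Theorem~\ref{thm:REF_BP}) forces its Laplace exponent to be precisely $v_{s,t}=-\log F_{s,t}(e^{-\cdot})$. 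Since $\zeta\mapsto e^{-\zeta}$ conjugates composition to composition, $(v_{s,t})$ is automatically a topological reverse evolution family once $(F_{s,t})$ is one. Thus I would first reduce \ref{item:E1} to the purely analytic statement that every $v_{s,t}$ is a Bernstein function, equivalently that every $\ell_{s,t}(1,\cdot)$ is infinitely divisible.

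Next I would dispatch the elementary equivalences. For \ref{item:E2}$\Leftrightarrow$\ref{item:E3}: by the branching property \ref{eq:branchingL} the generating function of $\ell_{s,t}(n,\cdot)$ is $F_{s,t}^{\,n}$, whose coefficient of $z^m$ is $\ell_{s,t}(n,\{m\})$; hence $F_{s,t}(0)=\ell_{s,t}(1,\{0\})=0$ is equivalent to $F_{s,t}$ being divisible by $z$, so $F_{s,t}^{\,n}$ is divisible by $z^{n}$, i.e.\ $\ell_{s,t}(n,\{m\})=0$ for all $m<n$. For \ref{item:E3}$\Leftrightarrow$\ref{item:E4} I would rewrite \ref{item:E3} as $\ell_{s,t}\big(n,\{m:m\ge n\}\big)=1$, i.e.\ $\mathbb P^{(s,n)}[N_t\ge n]=1$, and pass to monotone paths exactly as in Theorem~\ref{thm:non-decreasing}: the finite-dimensional distribution formula \eqref{eq:FD} gives
\[
\mathbb P^{(s,n)}[N_t\le N_u]=\int_{\{y\le z\}}\ell_{s,t}(n,\di y)\,\ell_{t,u}(y,\di z)=\int\ell_{s,t}(n,\di y)=1,
\]
while the choice $t=s$ recovers \ref{item:E3} from \ref{item:E4} through the initial condition \eqref{eq:initial}.

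For \ref{item:E1}$\Rightarrow$\ref{item:E2} I would exploit the reduction: embeddability makes $k_{s,t}(1,\cdot)$ a subordinator law supported on $\N_0^*$, so each $v_{s,t}\in\BF$ has Lévy measure $\sum_{k\ge1}\tau_k\delta_k$ on $\N$ and integer drift $\beta_{s,t}:=v_{s,t}'(\infty)\in\N_0$. Proving $\beta_{s,t}\ge1$ suffices, since then $v_{s,t}\in\BF_\infty$ and $F_{s,t}(0)=e^{-v_{s,t}(\infty)}=0$. Two facts drive this. First, $v\mapsto v'(\infty)$ is upper semicontinuous under locally uniform convergence, because $v(\theta)\ge\beta\theta$ forces $\beta\le v(\theta)/\theta$ for all $\theta>0$. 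Second, for a Bernstein function with jumps $\ge1$ one has at any interior $\theta_1>0$, using $k^2\ge k$,
\[
|v''(\theta_1)|=\sum_{k}k^{2}\tau_k e^{-k\theta_1}\ \ge\ \sum_{k}k\tau_k e^{-k\theta_1}=v'(\theta_1)-\beta .
\]
Letting $r\uparrow t$ we have $v_{r,t}\to\id_\UH$, so $v_{r,t}'(\theta_1)\to1$ and $v_{r,t}''(\theta_1)\to0$; the inequality then excludes $\beta_{r,t}=0$ near the diagonal, upper semicontinuity excludes $\beta_{r,t}\ge2$, leaving $\beta_{r,t}=1$. Finally the cocycle $\beta_{s,u}=\beta_{s,t}\beta_{t,u}$ (valid for Bernstein drifts) together with a chaining/compactness argument on $[s,t]$ upgrades this to $\beta_{s,t}\equiv1$, giving \ref{item:E2}.

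The remaining implication \ref{item:E2}$\Rightarrow$\ref{item:E1} is where I expect the real difficulty, and it is the main obstacle: $F_{s,t}(0)=0$ for a single function does \emph{not} make its lift Bernstein (e.g.\ $\tfrac12 z+\tfrac12 z^{3}$ has $F(0)=0$ but is not infinitely divisible), so one must use the full reverse evolution family structure. The cleanest route I see first treats the absolutely continuous case through generators: writing $\Phi(\cdot,t)\in\Gen(\PGF)$ as in \eqref{eq:BF_generator_D}, the conjugation turns it into
\[
\phi(\zeta,t)=-e^{\zeta}\Phi(e^{-\zeta},t)=-q_t+\alpha_t(0)\big(e^{\zeta}-1\big)+\sum_{n\ge2}\alpha_t(n)\big(e^{-(n-1)\zeta}-1\big),
\]
and the offending term $e^{\zeta}$ shows that $\phi(\cdot,t)$ lies in $\Gen(\BF)$ (the class \eqref{gen}, of at most quadratic growth) if and only if the ``death coefficient'' $\alpha_t(0)$ vanishes. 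Since $\Phi(0,t)=-\alpha_t(0)$ and $s\mapsto F_{s,t}(0)$ solves $h'(s)=\Phi(h(s),s)$, $h(t)=0$, one has $\alpha_t(0)=0$ a.e.\ precisely when $F_{s,t}(0)\equiv0$; Theorem~\ref{TH_EvolFam-HVF} then places $(v_{s,t})$ in $\BF$, producing the embedding process via Theorem~\ref{thm:REF_BP}. To reach a general \emph{topological} family I would reduce to the absolutely continuous case using the criteria of Section~\ref{subsec:AC}—note that $F_{s,t}(0)=0$ already yields a common interior fixed point at $z=0$ and hence control of the multiplier $F_{s,t}'(0)$, so a reparametrization in the spirit of Proposition~\ref{PR_AC2} should be available—and then transport the conclusion back. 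Carrying out this reduction without a finite-moment hypothesis, equivalently ruling out directly that a reverse evolution family with $F_{s,t}(0)\equiv0$ can contain a non-infinitely-divisible element, is the delicate point on which the whole proof hinges.
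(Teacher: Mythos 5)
Your handling of \ref{item:E2}$\Leftrightarrow$\ref{item:E3}$\Leftrightarrow$\ref{item:E4} is correct and coincides with the paper's Steps 3--5. Your proof of \ref{item:E1}$\Rightarrow$\ref{item:E2} is also correct, but genuinely different from the paper's: the paper lifts $F_{s,t}$ through $z=e^{-\zeta}$, uses the univalence of the elements of a topological reverse evolution family to show that the integer $m$ in $v_{s,t}(\theta+2\pi i)=v_{s,t}(\theta)+2\pi m i$ is nonzero, and then applies Lindel\"of's theorem to force $v_{s,t}(\infty)=\infty$. You instead exploit the arithmetic of lattice-supported killed-subordinator laws (L\'evy measure on $\N$, drift $\beta_{s,t}\in\N_0$), the inequality $|v''(\theta)|\ge v'(\theta)-\beta$ for such exponents, continuity of the family at the diagonal, multiplicativity of drifts under composition, and a chaining/compactness argument. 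This works and trades the paper's complex-analytic input for a real-analytic/probabilistic one; you should, however, justify the opening claim that a lattice-supported infinitely divisible law has finite L\'evy measure and integer drift (e.g.\ an infinite L\'evy measure would make the law atomless), which is routine.

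The genuine gap is in \ref{item:E2}$\Rightarrow$\ref{item:E1}, and it is exactly where you locate it. Your generator computation --- conjugating $\Phi$ to $\phi(\zeta,t)=-e^{\zeta}\Phi(e^{-\zeta},t)$ and observing that $\phi(\cdot,t)\in\Gen(\BF)$ precisely when the death coefficient $\alpha_t(0)$ vanishes, which in turn is equivalent to $F_{s,t}(0)\equiv0$ --- is exactly the paper's, but it presupposes that $(F_{s,t})$ is an \emph{absolutely continuous} reverse evolution family, while the theorem supplies only a topological one, for which no Herglotz vector field need exist. The paper closes this gap as follows: (a) every element of a topological reverse evolution family is univalent (\cite[Proposition~2.4]{CD_top-LTh} together with \cite[Remark~2.8]{GHP}), so $F_{s,t}(0)=0$ yields both $F_{s,t}(\UD\setminus\{0\})\subset\UD\setminus\{0\}$ (hence the lift to $\UH$ exists) and $F_{s,t}'(0)>0$; (b) by the Schwarz lemma $\lambda(t):=F_{0,t}'(0)$ is positive and non-increasing, so the explicit reparametrization $u:=f^{-1}$ with $f(s):=s-\lambda(s)+1$ makes $\lambda\circ u$ Lipschitz; (c) \cite[Theorem~7.3]{BCM1} then shows that $(F_{u(s),u(t)})$ is an absolutely continuous reverse evolution family, and \cite[Proposition~4.3]{CDG_decr} transfers absolute continuity to the lifted family in $\UH$; only then does the vector-field argument apply, and the conclusion for $(F_{s,t})$ itself is recovered by undoing $u$, since membership in $\BF_\infty$ is a property of individual maps. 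So the reduction you hoped for ``in the spirit of Proposition~\ref{PR_AC2}'' does exist, but it hinges on univalence, which your proposal never invokes, and on the two cited results rather than on anything resembling a finite-moment hypothesis.

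Relatedly, the assertion in your reduction paragraph that $(v_{s,t})$ is ``automatically'' a topological reverse evolution family once $(F_{s,t})$ is one glosses over the same point: for an individual $F\in\PGF$ with $F(0)=0$ the lift need not exist at all. For instance, $F(z)=\tfrac14 z^2+\tfrac12 z^4$ lies in $\PGF$ and fixes $0$, yet vanishes at $z=i/\sqrt2\in\UD\setminus\{0\}$, so $-\log F(e^{-\zeta})$ is not a holomorphic self-map of $\UH$ (and $F'(0)=0$, so multiplier control fails as well). It is the reverse-evolution-family structure, through univalence, that excludes such maps; any complete write-up of \ref{item:E2}$\Rightarrow$\ref{item:E1} must make this step explicit.
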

\begin{proof}First note that the last statement is an easy consequence of~\ref{item:E4}. For the rest, the proof is divided into five steps.

\Step{1: \rm \ref{item:E1}~$\Rightarrow$~\ref{item:E2}} Suppose that $(N_t)$ is embeddable into a branching process $(Z_t)$ on~${[0,\infty]}$. As before, denote by $(v_{s,t})$ the family of its Laplace exponents defined by~\eqref{eq:laplace}. It is easy to see that $e^{-v_{s,t}(\theta)}=F_{s,t}(e^{-\theta})$ for all ${(s,t)\in\Delta}$ and any ${\theta>0}$. Fix some $(s,t)\in\Delta$. Extending the last equality by holomorphicity, we have
\begin{equation}\label{EQ_covering}
F_{s,t}(e^{-\zeta})=\exp(-v_{s,t}(\zeta))\quad\text{for all~$\zeta\in\UH$.}
\end{equation}
In particular, we see that
\begin{equation}\label{EQ_plus 2_pi_m}
v_{s,t}(\theta+2\pi i)=v_{s,t}(\theta)+2\pi mi\quad\text{ for all ${\theta>0}$}
\end{equation}
and some $m\in\mathbb Z$ not depending on~$\theta$.
Note that $m\neq0$ because $v_{s,t}$ is univalent in~$\UH$. Indeed, by Theorem~\ref{thm:REF_BP}, $(v_{s,t})_{(s,t)\in\Delta}$ is a topological reverse evolution family.
Taking into account the relationship between evolution families and reserve evolution families, see \cite[Remark~2.8]{GHP}, the univalence follows by \cite[Proposition~2.4]{CD_top-LTh}.

We claim that $v_{s,t}(\infty):=\lim_{\theta\to+\infty}v_{s,t}(\theta)=+\infty$. Recall that this limit exists and belongs to~$(0,+\infty]$ because $v_{s,t}\in\BF$ and hence it is a positive non-decreasing function on $(0,\infty)$. By Lindel\"of's Theorem, see e.g.\ \cite[Theorem~1.5.7 on p.\,27]{BCD-Book} or \cite[Theorem~9.3 on p.\,268]{Pombook75}, $v_{s,t}$ has angular limit at~$\infty$. It follows that the limit of the l.h.s.\ of~\eqref{EQ_plus 2_pi_m} exists and equals~$v_{s,t}(\infty)$. This contradicts the fact that~${m\neq0}$ unless $v_{s,t}(\infty)=+\infty$.

Now passing to the limit as $\Real\in\zeta\to+\infty$ in~\eqref{EQ_covering}, we get $F_{s,t}(0)=0$, which proves~\ref{item:E2}.

\Step{2: \rm \ref{item:E2}~$\Rightarrow$~\ref{item:E1}} Suppose that $F_{s,t}(0)=0$ for any~$(s,t)\in\Delta$. Since by Theorem~\ref{thm:REF_BP_D}, $(F_{s,t})$ is a topological evolution family, arguing as above we see that $F_{s,t}$'s are univalent in~$\UD$. It follows that $F_{s,t}(\UD^*)\subset\UD^*:=\UD\setminus\{0\}$. Therefore, the mappings $F_{s,t}$ can be lifted from to $\UD^*$ to~$\UH$ w.r.t. the covering ${\UH\ni\zeta\mapsto e^{-\zeta}\in\UD^*}$. In other words, for any $(s,t)\in\Delta$, there exists $v_{s,t}\in\Hol(\UH,\UH)$ such that~\eqref{EQ_covering} holds. Moreover, since $F\big((0,1)\big)\subset(0,1)$, we may assume that $v_{s,t}\big((0,\infty)\big)\subset(0,\infty)$.

The most difficult part is to show that $(v_{s,t})\subset\BF$. Since $(F_{s,t})$ is a reverse evolution family and since $v_{s,t}(\theta)=-\log F_{s,t}(e^{-\theta})$ for all ${\theta>0}$ and all ${(s,t)\in\Delta}$, with the help of the identity principle for holomorphic functions we may conclude that $(v_{s,t})$ satisfies conditions~\ref{REF1} and~\ref{REF2} in Definition~\ref{DF_REF}. Furthermore, recall that ${F_{s,t}(0)=0}$ and ${F'_{s,t}(0)>0}$ for all~$(s,t)\in\Delta$, with the strict inequality taking place thanks to the univalence of~$F_{s,t}$. Hence, by the Schwarz lemma, from the identity $F_{0,t}={F_{0,s}\circ F_{s,t}}$, $(s,t)\in\Delta$, it follows that $\lambda(t):=F_{0,t}'(0)$ is a positive non-increasing function. Choose\footnote{To construct a concrete example, consider the increasing homeomorphism~$f$ of $[0,\infty)$ defined by $f(s):={s - \lambda(s)+1}$. Since $f(s_2) - f(s_1) \ge s_2 - s_1$ whenever  $0 \le s_1 \le s_2$, the function $u:=f^{-1}$ is Lipschitz continuous and so is $\lambda (u(t))= u(t)-t+1$.} an increasing homeomorphism $u$ of $[0,\infty)$ onto itself such that $\lambda\circ u$ is locally absolutely continuous on~${[0,\infty)}$.  Then according to \cite[Remark~2.8]{GHP} and \cite[Theorem~7.3]{BCM1} the formula $\tilde F_{s,t}:=F_{u(s),u(t)}$ defines an absolutely continuous reverse evolution family~$(\tilde F_{s,t})$. Thanks to the equivalence between \ref{item:E1} and~\ref{item:E2} in~\cite[Proposition~4.3]{CDG_decr}, it follows that the family $(\tilde v_{s,t})$ defined by $\tilde v_{s,t}:=v_{u(s),u(t)}$ for all ${(s,t)\in\Delta}$ is an absolutely continuous reverse evolution family in~$\UH$.

Let $\Phi$ be the Herglotz vector field associated with the reverse evolution family~$(\tilde F_{s,t})$. Applying Theorem~\ref{thm:main_BP_D} to $(\tilde F_{s,t})$ and using relation~\eqref{EQ_covering} we obtain the following ODE for~$(\tilde v_{s,t})$:
\begin{equation}\label{EQ_ODE-for-lifting}
\frac{\di \tilde v_{s,t}(\zeta)}{\di s}=\phi(\tilde v_{s,t}(\zeta),s)\quad\text{~for all $t>0$, all $\zeta\in\UH$, and a.e. $s\in[0,t]$,}
\end{equation}
where $~\phi(\zeta,s) := - e^{\zeta} \Phi(e^{-\zeta}, s)$. Equation~\eqref{EQ_ODE-for-lifting} means that $\phi$ is the Herglotz vector field associated with~$(\tilde v_{s,t})$.

Using representation~\eqref{eq:BF_generator_D} and taking into account that $\Phi(0,s)=0$ because $0$~is a fixed point for~$(F_{s,t})$, we find that
\begin{equation}
\phi(\zeta,s)=-q_s-\sum_{n\ge2}\alpha_s(n)\big(1-e^{-(n-1)\zeta}\big)= -q_s-\int_{(0,\infty)}\big(1-e^{-x\zeta}\big)\,\pi_s(\di x)
\end{equation}
for all~$s\ge0$ and all~$\zeta\in\UH$, where $\pi_s:=\sum_{k\in\Natural}\alpha_s(k+1)\delta_k$. Since  for each~${s\ge0}$ the series $\sum_{n\ge2}\alpha_s(n)$  converges, the measures~$\pi_s$ satisfy the integrability condition $\int_{(0,\infty)}\min\{\lambda,1\}\,\pi_s(\di\lambda)<\infty$. Hence, by \cite[Corollary~3.7]{GHP}, $\phi(\cdot,s)\in\Gen(\BF_\infty)$ for all~${s\ge0}$. The semigroup $\BF_\infty$ is topologically closed in~$\Hol(\UH,\UH)$. Therefore,  ${(\tilde v_{s,t})\subset\BF_\infty}$ by \cite[Theorem~2]{GHP}.

It immediately follows that $(v_{s,t})$ is a topological reverse evolution family contained in~$\BF_\infty$.
Thus, by Theorem~\ref{thm:REF_BP} and Remark~\ref{RM_branching-process-from-trans-probas}, there exists a branching process~$(Z_t)$ on~$[0,\infty]$ whose Laplace exponents are exactly $(v_{s,t})$. Using the relations~\eqref{EQ_covering}, \eqref{eq:laplace_D}, \eqref{eq:laplace} and taking into account that a bounded Borel measure on $[0,\infty)$ is uniquely determined by its Laplace transform, see e.g.~\cite[Proposition~1.2]{SSV12}, we conclude that $k_{s,t}(1,B)=\ell_{s,t}(1,B\cap\N_0^*)$ for any~$(s,t)\in\Delta$ and any Borel set $B\subset[0,\infty]$. Thanks to the branching properties~\ref{eq:branching} and~\ref{eq:branchingL}, the latter immediately implies~\eqref{EQ_embedd-cond}. This proves~\ref{item:E1}.

\Step{3:  \rm \ref{item:E2}~$\Rightarrow$~\ref{item:E3}} Note that \ref{item:E2} is equivalent to $\ell_{s,t}(1,\{0\})=0$ for any $(s,t) \in \Delta$. The branching property $\ell_{s,t}(n,\cdot):=\ell(1,\cdot)^{*n}$ implies
\[
\ell_{s,t}(n,\{m\}) = \sum_{\substack{k_1 + k_2 + \cdots + k_n =m \\ k_i \ge 0}} \ell_{s,t}(1,\{k_1\})\ell_{s,t}(1,\{k_2\}) \cdots \ell_{s,t}(1,\{k_n\}).
\]
In case $m<n$, there exists some $i$ such that $k_i=0$, and hence  $\ell_{s,t}(n,\{m\})=0$.

\Step{4:  \rm \ref{item:E3}~$\Rightarrow$~\ref{item:E4}} Denote $\Delta^*:= \{(k,m): k,\,m\in\N_0^*,~k \le m\}$.
For any $0 \le s \le t \le u$ and $n \in \N_0$, using \eqref{eq:FD} we have
\begin{align*}
\mathbb P^{(s,n)}[ N_t \le N_u]
&=\mathbb P^{(s,n)}[(N_t, N_u)\in\Delta^*] = \int_{\Delta^*}\!\ell_{s,t}(n, \di k)\,\ell_{t,u}(k,\di m)  \label{eq:MK} \\[1.25ex]
&= \sum_{(k,m) \in \Delta^*}\ell_{s,t}(n, \{k\})\,\ell_{t,u}(k,\{m\})
=   \sum_{k \in \N_0^*}\ell_{s,t}(n, \{k\})~=~1.
\end{align*}

\Step{5:  \rm \ref{item:E4}~$\Rightarrow$~\ref{item:E2}}  This implication is immediate. Indeed, since by~\ref{item:E4}, $$1= \mathbb P^{(s,1)}[ N_s \le N_t]  = \ell_{s,t}(1, \N \cup \{\infty\}),$$ we have $F_{s,t}(0)  = \ell_{s,t}(1,\{0\})= 1- \ell_{s,t}(1, \N \cup \{\infty\})=0$ as desired.
\end{proof}

\newpage
%%%%%%%%%%%%%%%%%%%%%%%%%%%555
\begin{appendix}
\section{Proof of Theorem~\ref{thm:cadlag}}\label{App-Th2.7}
Theorem~\ref{thm:cadlag}, stated in Section~\ref{SS_topological}, seems to be a rather standard result. However, for the purpose of completeness, we provide here a detailed proof.

\begin{proof}[\proofof{Theorem~\ref{thm:cadlag}}]  The uniqueness is standard. Fix an $s\ge0$.
For each cylinder set $$A:=(\Xc_{t_1})^{-1}(B_1) \cap \cdots \cap (\Xc_{t_n})^{-1}(B_n),$$ where $s \le t_1 \le \cdots\le t_n, B_ i \in \mathcal B ([0,\infty])$, the value of $\mathbb Q^{(s,x)}(A)$ is determined only by $(k_{s,t})$. Indeed, in view of~\eqref{eq:FD}, we have
\begin{equation*}
\mathbb Q^{(s,x)}(A)=\int_{B_1 \times \cdots \times B_n} k_{s,t_1}(x, \di x_1)\, k_{t_1,t_2}(x_1, \di x_2) \cdots k_{t_{n-1}, t_n}(x_{n-1}, \di x_n).
\end{equation*}
It only remains to recall that the cylinder sets form a $\pi$-system and that this $\pi$-system generates the whole $\sigma$-field $\mathcal G_{[s,\infty)}$. Therefore, every probability measure on $\mathcal G_{[s,\infty)}$ is uniquely determined by its evaluations on the cylinder sets, see \cite[Chapter~II, Corollary~4.7 on p.\,93]{RW}.\medskip

For the existence, let $\big((Z_t)_{t\ge0},\Omega, \mathcal F, (\mathcal F_I)_I, (\mathbb P^{(s,x)})_{s\ge0,\, x \in [0,\infty]}\big)$ be as in Remark~\ref{RM_branching-process-from-trans-probas}. In order to define measures $\mathbb Q^{(s,x)}$ we first construct a \cadlag modification $(Y_{s,t})_{t\ge s}$ of $(Z_t)_{t\ge s}$.
Recall that by Theorem~\ref{thm:REF_BP}, the Laplace exponents $v_{s,t}$ associated to the transition kernels~$k_{s,t}$ via~\eqref{eq:laplace} form a topological reverse evolution family.

\Step{1: Construction of $Y_{s,t}$}
Throughout this step, we fix an arbitrary $s \ge0$ and some ${\theta>0}$.
	For a while we fix also some ${N \in \N}$ with ${N> s}$. Consider the stochastic process $$V_t^{(N)}:=\exp(-v_{t,N}(\theta)Z_t),\quad s \le t < N,$$ where $V_t^{(N)}$ is set to be~$0$ if ${Z_t=\infty}$.  The process $\big(V_t^{(N)}\big)_{t \in [s,N)}$  is a martingale w.r.t. $\big((\mathcal F_{[s,t]})_{t\in [s,N)}, \mathbb P^{(s,x)}\big)$ for any $x \in [0,\infty]$, because thanks to the Markov property~\eqref{eq:Markov} and~\ref{REF2},  for any ${s \le t \le u < N}$ we have
	\[
	\mathbb E^{(s,x)}\big[\exp(
-{v_{u,N}}(\theta) Z_u) \big| \mathcal F_{[s,t]}\big]  = \exp\big(- Z_t v_{t,u} (v_{u,N}(\theta) )\big)
                                                       = \exp(- v_{t,N}(\theta) Z_t)
	\]
$\mathbb P^{(s,x)}\text{\,-\,a.s.}$ By \cite[Chapter~II, Proposition~67.6 on p.\,173]{RW}, the stochastic process $(V^{(N)}_{t})_{t\in [s,N)}$ is a martingale with respect to the usual augmentation $(\widehat{\mathcal F}_{[s,t]}^x)_{t\ge s}$ of the filtration $(\mathcal F_{[s,t]})_{t\ge s}$ w.r.t.\ $t$, i.e.\
		the $\sigma$-field generated by $\bigcap_{u > t} \mathcal F_{[s,u]}$ and all the $\mathbb P^{(s,x)}$-null sets of $\mathcal F_{[s,\infty)}$. Denote by $\widehat{\mathbb P}^{(s,x)}$ the unique extension of $\mathbb P^{(s,x)}$ to $\widehat{\mathcal F}_{[s,\infty)}^x$. By the Regularization Theorem (see e.g.~\cite[p.~170--173]{RW} or \cite[Theorem~1-(2) on p.\,67]{DM82}), the process $(V^{(N)}_{t})_{t\in [s,N)}$ admits a \cadlag modification  $(W^{(N)}_{s,t})_{t\in [s,N)}$ which is a martingale w.r.t. the right-continuous filtration~${(\widehat{\mathcal F}_{[s,t]}^x)_{t\ge s}}$.\smallskip

For $p\in \{0,1\}$ we let
$
T^{(N)}_{s,p}(\omega) := \big\{t\in[s,N): W^{(N)}_{s,t-}(\omega) =p \,\text{~or~}\, W^{(N)}_{s,t}(\omega)=p \big\}
$
and consider the event $\Omega^{(N)}_{s,p}$ defined as the union of $\big\{\omega \in \Omega\,:\, T^{(N)}_{s,p}(\omega)=\emptyset\}$ and
\[
 \Big\{\omega \in \Omega\,:\, T^{(N)}_{s,p}(\omega)\neq\emptyset~\text{\,and\,}~W^{(N)}_{s,t}(\omega) = p \text{~for all~} t \in [\inf T^{(N)}_{s,p}(\omega), N)\Big\}.
\]
Note that $W^{(N)}_{s,t}$ is $\mathcal F_{[s,\infty)}$-measurable for all $t\in [s,N)$. Therefore, by assertion~(i) of \cite[Chapter~II, Theorem~78.1 on p.\,191]{RW}, $\Omega^{(N)}_{s,p} \in \mathcal F_{[s,\infty)}$. Moreover, in view of assertion~(ii) of the same theorem, the fact that $(W^{(N)}_{s,t})_{t\in [s,N)}$ is a martingale (w.r.t. the filtered probability space indicated above) allows us to conclude  that $\mathbb P^{(s,x)}[\Omega^{(N)}_{s,p}]=\widehat{\mathbb P}^{(s,x)}[\Omega^{(N)}_{s,p}]=1$,  ${p\in\{0,1\}}$.\smallskip

For $t \in [s,N)$, we set
\[
Y_{s,t}^{(N)}(\omega) :=
\begin{cases}
\displaystyle -\frac1{v_{t,N}(\theta)}\log W^{(N)}_{s,t}(\omega),& \omega \in  \Omega^{(N)}_{s,0} \cap \Omega^{(N)}_{s,1}, \\
\quad 1, & \omega \in \Omega \setminus (\Omega^{(N)}_{s,0} \cap \Omega^{(N)}_{s,1}),
\end{cases}
\]
where $\log$ is naturally extended to the homeomorphism from $[0,1]$ onto $[-\infty,0]$. The process $(Y_{s,t}^{(N)})_{t\in[s,N)}$ is a modification of~$(Z_t)_{t\in [s,N)}$ since $W_{s,t}^{(N)}$ is a modification of $V_t^{(N)}$.

For each pair of integers $M,N$ with $s<M<N$ there exists an event $\Omega_s^{(M,N)} \in \mathcal{F}_{[s,\infty)}$ of probability~1 such that $Y_{s,t}^{(M)}(\omega)=Y_{s,t}^{(N)}(\omega)$ for all $s\le t <M$ and all ${\omega \in \Omega_s^{(M,N)}}$. Indeed,
\[
1\,=\,\mathbb P^{(s,x)}\big[\forall t \in [s,M)\cap \mathbb Q,\, Y_{s,t}^{(M)} = Y_{s,t}^{(N)}\big]\,=\,\mathbb P^{(s,x)}\big[\forall t \in [s,M),\, Y_{s,t}^{(M)} = Y_{s,t}^{(N)}\big],
\]
where the first equality follows by the fact that $Y_{s,t}^{(M)}$ and $Y_{s,t}^{(N)}$ are both modifications of $(Z_t)$ and the second equality follows by the \cadlag property.
Let $\Omega_s:=\bigcap_{M, N\in \N, s<M<N}\big(\Omega^{(N)}_{s,0} \cap \Omega^{(N)}_{s,1}\cap\Omega_s^{(M,N)}\big) \in \mathcal{F}_{[s,\infty)}$. For each $t \in [s,+\infty)$ we select $N\in\N, N>t$ and set
\[
Y_{s,t}(\omega) : =
\begin{cases} Y_{s,t}^{(N)}(\omega), & \omega \in \Omega_s, \\
1, & \omega \in \Omega \setminus \Omega_s.
\end{cases}
\]
This definition does not depend on the choice of~$N$, and $(Y_{s,t})_{t\ge s}$ is a modification of~$(Z_t)_{t\ge s}$. It remains to notice that  by construction, once a sample path $(Y_{s,t}(\omega))_{t\ge s}$ hits the boundary point $0$ or $\infty$, it stays there afterwards. Thus every sample path of $(Y_{s,t})_{t\ge s}$ belongs to the space $D[s,\infty)$.

\Step{2: Construction of $\mathbb Q^{(s,x)}$} Fix for a while some~$s\ge0$ and $x\in[0,\infty]$.
Note that $Y_{s,t}$ is ${\big(\mathcal F_{[s,\infty)},\mathcal B([0,\infty])\big)}$-measurable. It follows that the map $\Phi_s\colon \Omega \to D[0,\infty)$ defined by
\[
(\Phi_s(\omega))(t) := \begin{cases}  Y_{s,t}(\omega), & t \ge s, \\
\,1, & 0 \le t <s,
\end{cases}
\]
 is $\big(\mathcal F_{[s,\infty)}, \mathcal G_{[s,\infty)}\big)$-measurable. We set $\mathbb Q^{(s,x)}$ to be the push-forward of the measure~$\mathbb P^{(s,x)}$ w.r.t. $\Phi_s$.  Then for every ${n\in \N}$, ${s \le t_1 \le \cdots \le t_n}$ and ${B_1,\dots, B_n \in \mathcal B([0,\infty])}$, we have
\begin{align*}
 \mathbb Q^{(s,x)}[\Xc_{t_1} \in B_1, \dots, \Xc_{t_n} \in B_n]
 &= \mathbb P^{(s,x)}[Y_{s,t_1} \in B_1, \dots, Y_{s,t_n} \in B_n]  \\
 &=  \mathbb P^{(s,x)}[Z_{t_1} \in B_1, \dots, Z_{t_n} \in B_n],
\end{align*}
where the second equality holds because $(Y_{s,t})$ is a modification of $(Z_t)$.

Regard the coordinate process $(\Xc_t)$ as equipped with the measures $\mathbb Q^{(s,x)}$. Then the above equality trivially implies  \eqref{eq:initial} and, by standard arguments, also implies the Markov property \eqref{eq:Markov} for $(\Xc_t)$, see e.g.\ \cite[\S8.3.1, p.\ 137]{Wen81}. Therefore,  $\big((\Xc_t)_{t\ge0}, D[0,\infty), \mathcal G, (\mathcal G_I)_I, (\mathbb Q^{(s,x)})_{s\ge0, x \in [0,\infty]}\big)$ is a branching process with transition kernels~$(k_{s,t})$, as desired.
\end{proof}

\section{Fixed points of holomorphic self-maps}\label{App-Fixed}
Here we gather a few definitions and the most basic results concerning iteration of holomorphic self-maps. The proofs can be found, e.g., in the monographs \cite[Chapter~1.4]{Abate:book}, \cite[Chapter~5]{Abate2} and \cite{BCD-Book}, where this topic is covered in depth.

Fix a holomorphic self-map $v\colon\UD\to\UD$ different from $\id_\UD$. According to the classical Denjoy\,--\,Wolff Theorem (see e.g. \cite[\S1.8]{BCD-Book}),
\begin{itemize}
\item[(i)] either $v$ is an elliptic automorphism of~$\UD$, i.e.\ a M\"{o}bius transformation mapping $\UD$ onto itself and having one fixed point~$\tau$ in~$\UD$,\smallskip
\item[(ii)] or there exists a (unique) point $\tau$ in the closure $\overline\UD$ of~$\UD$ such that
$$v^{\circ n}:=\underbrace{\,v\circ\ldots\circ v}_{n\text{~times}}~\to~ \tau$$\vspace{-2ex}

\noindent locally uniformly in~$\UD$ as ${n\to+\infty}$.
\end{itemize}
In both cases, $\tau$ is referred to as the \textsl{Denjoy\,--\,Wolff point} of~$v$, or in short, the \textsl{DW-point}.
If the DW-point $\tau$ of $v\in\Hol(\UD,\UD)\setminus\{\id_\UD\}$ lies inside~$\UD$, then it is the unique fixed point of~$v$ in~$\UD$ and ${|v'(\tau)|\le1}$ with the equality in case~(i) and strict inequality in case~(ii).
If~${\tau\in\partial\UD}$, then $v$ has no fixed points in~$\UD$. In this case, $\tau$ can be still considered a fixed point in the following sense.  A \textsl{boundary fixed point} of~$v$ is a point ${\sigma\in\partial\UD}$ such that ${\anglim_{z\to\sigma}v(z)=\sigma}$, where $\anglim$ denotes the so-called angular or non-tangential limit, see e.g. \cite[Section~1.5]{BCD-Book}. For any boundary fixed point~$\sigma$, the limit
$$
v'(\sigma):=\anglim_{z\to\sigma}\frac{v(z)-\sigma}{z-\sigma},
$$
known as the \textsl{angular derivative} of $v$ at~$\sigma$,
exists finite or infinite. If $v'(\sigma)\neq\infty$, then $\sigma$ is said to be a \textsl{boundary regular fixed point} of~$v$, or \textsl{BRFP} for short.

If the DW-point $\tau\in\partial\UD$, then $\tau$ is a BRFP and $v'(\tau)\in(0,1]$. Conversely, if $\sigma$ is BRFP but not the DW-point of~$v$, then $v'(\sigma)\in(1,\infty)$.

All mentioned above can be extended to holomorphic self-maps of~$\UH$ with the help of the conformal mapping
$$
\UD\ni\zeta\mapsto\frac{1+\zeta}{1-\zeta}\in\UH.
$$
The only significant difference is that for the boundary point $\sigma=\infty$ the angular derivative is defined by
$$
v'(\infty)=\anglim_{z\to\infty}\frac{v(z)}{z}.
$$
By the Wolff Lemma (also known as the half-plane version of the Julia Lemma, see e.g. \cite[Theorem~B]{GHP}), the above limit exists and it is a non-negative real number. The point~$\infty$ is a BRFP of~$v\in\Hol(\UH,\UH)\setminus\{\id_\UH\}$ if and only if~$v'(\infty)\neq0$. Similarly, $\infty$ is the DW-point if and only if~${v'(\infty)\ge1}$.

\begin{remark}\label{RM_the-same-FPs}
If $(v_t)$ is a one-parameter semigroup in $\Hol(\UD,\UD)$ or in $\Hol(\UH,\UH)$, then all the elements of~$(v_t)$ different from the identity map have the same DW-point and the same boundary (regular and irregular) fixed points, see e.g. \cite[pp.\,218--220, 327--328]{BCD-Book}.
\end{remark}
\begin{remark}\label{RM_DW-Bernstein}
The most interesting case for the purposes of this paper is when ${v:\UH\to\UH}$ is a Bernstein function. Since in such a case $(0,\infty)$ is mapped into itself, the DW-point of $v$ belongs to $[0,\infty]$.
Notice also that existence of the angular limit trivially implies the existence of the corresponding radial limit. On the other hand, by Lindel\"of's Theorem, see e.g.\ \cite[Theorem~1.5.7 on p.\,27]{BCD-Book}, if $v\in\Hol(\UH,\UH)$ has a radial limit at some boundary point, then it has an angular limit at the same point. It follows that $\sigma$ is a boundary fixed point of~$v\in\BF$ if and only if ${v(\sigma)=\sigma}$ and that in such a case the angular derivative of~$v$ at~$\sigma$ coincides with $v'(\sigma)$, where $v(\sigma)$ and $v'(\sigma)$ are to be understood as limits along the semiaxis~$(0,\infty)$; see~\eqref{EQ_BF-limits}.
\end{remark}

\section{A comparison Lemma}
In several proofs we have used a technical lemma, which should be known to specialists in differential equations. Being unable to find a suitable reference to the existing literature, we state it below together with a proof.
\begin{lemma} \label{lem:comparison}
Let $T\in(0,\infty]$, $I\subset\Real$. Let $f\colon [0,T)\to I$ and $g\colon [0,T)\to I$ be two locally absolutely continuous functions with ${f(0)=g(0)}$. Suppose that there exists a function ${h\colon I \times [0,T)\to\Real}$ such that:
\begin{itemize}
\item[\rm (i)] for any $x\in I$, the function $h(x,\cdot)$ is measurable on~$[0,T)$;\smallskip
\item[\rm (ii)] there exists a locally integrable function $p\colon [0,T)\to [0,\infty)$ such that $$\qquad\big|h(x,t)-h(y,t)\big| \le p(t)|x-y|$$ for a.e. $t \in [0,T)$ and all $x,\,y \in I$;\smallskip
\item[\rm (iii)] $g'(t)=h(g(t),t)$ and $f'(t)\le h(f(t),t)$ for a.e.\ $t\in[0,T)$.
\end{itemize}\smallskip
Then $f \le g$ on $[0,T)$.
\end{lemma}
\begin{proof}
Suppose to the contrary that $f(t_\#) > g(t_\#)$ for some $t_\# \in [0,T)$. Set $$t_\ast := {\max \{t \in [0, t_\#): f(t) = g(t)\}},$$ which  exists because  $f$ and~$g$ are continuous and ${f(0)= g(0)}$. Then ${f(t)>g(t)}$ for all $t \in (t_\ast, t_\#]$. Replacing~$t_\#$ with a point in~$(t_*,T)$ close enough to~$t_*$, we may assume that ${\int_{t_\ast}^{t_\#}\!p(s)\di s<1}$.
Taking into account that by a standard argument, see e.g. \cite[Chapter~VIII, \S8]{SansoneV2},  $s\mapsto h(f(s),s)$ is integrable on~$[0,t_\#]$, for every $t \in(t_\ast, t_\#]$, we have
\begin{align*}
f(t) - g(t)
&= \int_{t_\ast}^t [f'(s) - g'(s)]\, \di s  \\
&\le \int_{t_\ast}^t [h(f(s),s) - h(g(s),s)]\, \di s \\
&\le  \int_{t_\ast}^t p(s) [f(s) - g(s)]\, \di s \\
&\le \sup_{t \in [t_\ast, t_\#]}[f(t)- g(t)] \int_{t_\ast}^t p(s) \, \di s.
\end{align*}
It follows that
\[
\sup_{t \in [t_\ast, t_\#]}[f(t)- g(t)]  \le \sup_{t \in [t_\ast, t_\#]}[f(t)- g(t)] \int_{t_\ast}^{t_\#} \!p(s) \, \di s\,  <\sup_{t \in [t_\ast, t_\#]}[f(t)- g(t)],
\]
which is a contradiction.
\end{proof}

\end{appendix}

%%%%%%%%%%%%%%%%%%%%%%%%%%%555

%\addtocontents{toc}{\SkipTocEntry}
\section*{Acknowledgments}
The authors are deeply indebted to the anonymous referees for their very careful reading and valuable comments and remarks, which helped to improved the text and simplify some of the proofs.

Part of this work was finished during T.H.'s stays in
Guanajuato thanks to the hospitality of CIMAT and Octavio Arizmendi.

%\addtocontents{toc}{\SkipTocEntry}
\section*{Funding}
This work is supported by the Research Institute for Mathematical Sciences, an International Joint
Usage/Research Center located in Kyoto University, and by JSPS Grant-in-Aid for Young Scientists
19K14546, JSPS Scientific Research 18H01115 and JSPS Open Partnership Joint Research Projects grant
no. JPJSBP120209921.

 P.G. is partially supported by GNSAGA INdAM (\textsl{Istituto Nazionale di Alta Matematica ``Francesco Severi'',} Italy).


\begin{thebibliography}{10}

\bibitem{Abate:book} M.~Abate, \textsl{Iteration Theory of Holomorphic Maps on Taut Manifolds.} Mediterranean Press, Rende, Cosenza, 1989.

\bibitem{Abate2} M.~Abate, {\it Holomorphic dynamics on hyperbolic Riemann surfaces}, De Gruyter Studies in Mathematics, 89, De Gruyter, Berlin, 2023. MR4544891


\bibitem{ConfInv} L.\,V.\ Ahlfors, {\it Conformal invariants: topics in geometric function theory}, McGraw-Hill Series in Higher Mathematics, McGraw-Hill, New York-D\"usseldorf-Johannesburg, 1973; MR0357743

\bibitem{AN72} K.\ B.\ Athreya and P.\ E.\ Ney, Branching Processes. Die Grundlehren der mathematischen Wissenschaften, Band 196. Springer-Verlag, Berlin, 1972.


\bibitem{Bauer} R. O. Bauer, \textit{L\"{o}wner's equation from a noncommutative probability perspective,}
 J. Theoret. Probab. {\bf 17} (2004), no.~2, 435--456. MR2053711


\bibitem{BM2007} A.F.~Beardon\ and\ D.~Minda, \textit{The hyperbolic metric and geometric function theory}, Quasiconformal mappings and their applications, 9--56, Narosa, New Delhi. MR2492498

\bibitem{BP78}
E.\ Berkson and H.\ Porta,
\emph{Semigroups of analytic functions and composition operators.}
 {Michigan Math.\ J.}\ 25 (1978), 101--115.


\bibitem{BCM1}
F.\ Bracci, M.~D.\ Contreras, S.\ D{\'{\i}}az-Madrigal,
\emph{Evolution families and the Loewner equation~I: the unit disc.}
	 J.\ Reine Angew.\ Math.\ {672} (2012), p.\ 1--37.



\bibitem{BCD-Book} F.\ Bracci, M.~D.\ Contreras, S.\ D{\'{\i}}az-Madrigal, \textsl{Continuous semigroups of holomorphic functions in the unit disc.} Springer Monographs in Mathematics, 2020.
	

\bibitem{BRFP2015}
F.\ Bracci, M.\ D.\ Contreras, S. D{\'{\i}}az-Madrigal, P.\ Gumenyuk,
\emph{Boundary regular fixed points in Loewner theory.}
Annali di Matematica Pura ed Applicata {194} (2015), no.\ 1, 221--245.



\bibitem{Bri10} T.\ Britton, \emph{Stochastic epidemic models: A survey}. Mathematical Biosciences 225 (2010), 24--35.


\bibitem{CLUB09}
M.-E.\ Caballero, A.\ Lambert and G.\ Uribe Bravo,
\emph{Proof(s) of the Lamperti representation of continuous-state branching processes}.
Probab.\ Surv.\ 6 (2009), 62--89.


\bibitem{SantiTiz}T. Casavecchia and S. D\'iaz-Madrigal, A non-autonomous version of the Denjoy-Wolff theorem, Complex Anal. Oper. Theory {\bf 7} (2013), no.~5, 1457--1479; MR3103303

\bibitem{CD_top-LTh} M.\,D. Contreras\ and\ S. D\'{\i}az-Madrigal, Topological Loewner theory on Riemann surfaces, J. Math. Anal. Appl. {\bf 493} (2021), no.~1, 124525, 18~pp. MR4141497

\bibitem{CDG_LCh} M.~D.\ Contreras, S.~D{\'{\i}}az-Madrigal, and P.~Gumenyuk,
\emph{Loewner chains in the unit disk}.
Rev.\ Mat.\ Iberoam.\ {26} (2010), no.~3, 975--1012.


\bibitem{CDG-AnnulusI} M.\,D. Contreras, S. D{\'{\i}}az-Madrigal, and P. Gumenyuk, \emph{Loewner theory in annulus I: Evolution families and differential equations}, Trans. Amer. Math. Soc. {\bf 365} (2013), no.~5, 2505--2543. MR3020107


\bibitem{CDG_decr}
M.\,D.\ Contreras, S.\ D{\'{\i}}az-Madrigal, and P.\ Gumenyuk,
\emph{Local duality in {L}oewner equations},
 {\em J. Nonlinear Convex Anal.}\ 15  (2014), no.\ 2, 269--297.

\bibitem{CD_RACSAM} M.\,D.~Contreras\ and\ S.~D\'{\i}az-Madrigal,
\textit{Boundary fixed points vs. critical points in semigroups of holomorphic self-maps of the unit disc}, Rev. R. Acad. Cienc. Exactas F\'{\i}s. Nat. Ser. A Mat. RACSAM \textbf{114} (2020),  no.~3, Paper No. 137. MR4103457

\bibitem{CoDiPo04} M.\,D. Contreras, S. D\'{\i}az-Madrigal\ and\ C. Pommerenke, Fixed points and boundary behaviour of the Koenigs function, Ann. Acad. Sci. Fenn. Math. {\bf 29} (2004), no.~2, 471--488. MR2097244

\bibitem{DM82} C. Dellacherie and P.-A. Meyer, {\it Probabilities and potential. B}, translated from the French by J. P. Wilson, North-Holland Mathematics Studies, 72, North-Holland, Amsterdam, 1982; MR0745449

\bibitem{Duren} P. L. Duren, {\sl Univalent functions}, Grundlehren der Mathematischen Wissenschaften, 259, Springer-Verlag, New York, 1983. MR0708494

\bibitem{Dynkin-book} E. B. Dynkin, {\it Theory of Markov processes}, translated from the Russian by D. E. Brown and edited by T. K\"{o}v\'{a}ry, reprint of the 1961 English translation, Dover Publications, Inc., Mineola, NY, 2006. MR2305744

\bibitem{FL} R.\ Fang, and Z.\ Li, \emph{Construction of continuous-state branching processes in varying environments,} Ann. Appl. Probab. {\bf 32}~(2022), no.~5, 3645--3673.

\bibitem{FHS20} U.\ Franz, T.\ Hasebe and S.\ Schlei\ss inger, \emph{Monotone increment processes, classical Markov processes, and Loewner chains,} Dissertationes Math.\ {552} (2020), 119 pp. %MR4152669



\bibitem{Goluzin} G.\,M. Goluzin,  \textsl{Geometric theory of functions of a complex variable}. Amer. Math. Soc., Providence, R.I., 1969. MR0247039 (Translated from G. M. Goluzin, {\it Geometrical theory of functions of a complex variable} (Russian), Second edition, Izdat. ``Nauka'', Moscow, 1966.)



\bibitem{Goo70} G.\ S.\ Goodman, \emph{An intrinsic time for non-stationary finite Markov chains}. Z.\ Wahrsch.\  verw.\ Geb.\ 16 (1970), 165--180.

\bibitem{Goo74} G.\ S.\ Goodman, \emph{The existence of intensities of countable state nonstationary Markov transition functions}. Ann.\ of Math., Second Series, 99, no.\ 3 (1974), 545--552.


\bibitem{Goryainov96R} V.\,V. Gorya\u{\i}nov, \textit{Evolution families of analytic functions and time-inhomogeneous Markov branching processes} (in Russian). Dokl.\ Akad.\ Nauk 347 (1996), No.\ 6, 729--731.

\bibitem{Goryainov96} V.\ V.\ Gorya\u{\i}nov, \emph{Some analytic properties of time inhomogeneous Markov branching processes}. Z.\ Angew.\ Math.\ Mech.\ 76, suppl.\ 3 (1996), 439--440.


\bibitem{Goryainov-survey} V.\ V.\ Gorya\u{\i}nov, \emph{Semigroups of analytic functions in analysis and applications.} Russian Math. Surveys {67} (2012), no.~6, 975--1021; translated from Uspekhi Mat. Nauk {67} (2012), no.\ 6 (408), 5--52. %MR3075076



\bibitem{Gre74} D.\ R.\ Grey, \emph{Asymptotic behaviour of continuous time, continuous state-space branching processes}, J.\ Appl.\ Probab.\ {11}, Issue 4 (1974), 669--677.



\bibitem{GHP} P.~Gumenyuk, T.\ Hasebe and J.-L.\ Per\'ez,  {\it Loewner Theory for Bernstein functions I: evolution families and differential equations}, Constr.\ Approx. {\bf 61} (2025), 379--412. https://doi.org/10.1007/s00365-023-09675-9

\bibitem{GumProkh2018} P. Gumenyuk, D. Prokhorov, {\it Value regions of univalent self-maps with two boundary fixed points}, Ann. Acad. Sci. Fenn. Math. {\bf 43} (2018),  451--462.

\bibitem{Har63} T.\ E.\ Harris, The Theory of Branching Processes.  Die Grundlehren der mathematischen Wissenschaften, Band 119, Springer-Verlag, Berlin,  1963.



\bibitem{HH} T.\ Hasebe and I.\ Hotta, \emph{Additive processes on the unit circle and Loewner chains}, Int. Math. Res. Not. IMRN {\bf 22} (2022), 17797-17848.
 https://doi.org/10.1093/imrn/rnab157; https://arxiv.org/abs/2010.15194.

\bibitem{Jek20} D.\ Jekel, \emph{Operator-valued chordal Loewner chains and non-commutative probability},
J.\ Funct.\ Anal.\ 278, Issue 10 (2020), 108452.

\bibitem{MJ1958} M. Ji\v{r}ina, \emph{Stochastic branching processes with continuous state space}, Czechoslovak Math.\ J.\ {\bf 8(83)} (1958), 292--313.


\bibitem{Ken75} D.\ G.\ Kendall, \emph{The genealogy of genealogy: branching processes before (and after) 1873}. Bull.\ London Math.\ Soc.\ 7 (1975), no.\ 3, 225--253.

\bibitem{Kyp14}
A.\ E.\ Kyprianou, Fluctuations of L\'evy processes with Applications. Second Edition.  Springer-Verlag, Berlin, Heidelberg, 2014.


\bibitem{Lawler} G.~F.\ Lawler, Conformally Invariant Processes in the Plane. Mathematical Surveys and Monographs, vol. 114, American Mathematical Society, Providence, RI, 2005.



\bibitem{LiLi2024}P.~S. Li and Z. Li, Uniqueness problem for the backward differential equation of a continuous-state branching process, Acta Math. Sin. (Engl. Ser.) {\bf 40} (2024), no.~8, 1825--1836; MR4783616


\bibitem{LeGall} J.-F. Le Gall, {\it Spatial branching processes, random snakes and partial differential equations}, Lectures in Mathematics ETH Z\"{u}rich, Birkh\"{a}user Verlag, Basel, 1999. MR171470


\bibitem{Li22}
Z. Li, {\it Measure-valued branching Markov processes}, second edition,
Probability Theory and Stochastic Modelling, 103, Springer, Berlin, 2022.

\bibitem{Pombook75}  Ch. Pommerenke, {\sl Univalent functions}, Vandenhoeck and Ruprecht, G{\"o}ttingen, 1975


\bibitem{RW} L. C. G. Rogers\ and\ D. Williams, {\it Diffusions, Markov processes, and martingales. Vol.\,1. Foundations}, reprint of the second (1994) edition, Cambridge Mathematical Library, Cambridge University Press, Cambridge, 2000. MR1796539

\bibitem{RW2} L. C. G. Rogers\ and\ D. Williams, {\it Diffusions, Markov processes, and martingales. Vol.\,2. It\^{o} calculus}, reprint of the second (1994) edition, Cambridge Mathematical Library, Cambridge University Press, Cambridge, 2000. MR1780932


\bibitem{RyzhovSkorokhod} Y.~M. Ryzhov and A.~V. Skorokhod, Homogeneous branching processes with a finite number of types and with continuously changing mass, Probab. Appl.  {\bf 15} (1970), 704--707 [translated from Russian, Teor. Verojatnost. i Primenen. {\bf 15} (1970) No.\,4 , 722--726] MR0288860


\bibitem{SansoneV2} G. Sansone, {\it Equazioni Differenziali nel Campo Reale, Vol. 2} (Italian), Nicola Zanichelli, Bologna, 1949. MR0030663

\bibitem{SSV12}
R.\ L.\ Schilling, R.\ Song and Z.\ Vondra\v{c}ek, Bernstein Functions, Theory and Applications, Second Edition. Walter de Gruyter GmbH \& Co.\ KG, Berlin/Boston, 2012.



\bibitem{Sil68}
M.\ L.\ Silverstein, \emph{A new approach to local times}. J.\ Math.\ Mech.\ 17, No.\ 11 (1968), 1023--1054.

\bibitem{Sharpe88}
M.~J. Sharpe, {\it General theory of Markov processes}, Pure and Applied Mathematics, 133, Academic Press, Boston, MA, 1988; MR0958914

\bibitem{Wen81} A.\ D.\ Wentzell, {\sl A Course in the Theory of Stochastic Processes}, McGraw-Hill Inc., 1981.


\end{thebibliography}
\end{document}